\setlist[itemize]{label= \raisebox{0.25ex}{\scalebox{0.5}{$\blacksquare$}}}
\newcommand{\SetAlgoLined}{}
\newcommand{\KwIn}[1]{\Statex \textbf{Input:} #1}
\newcommand{\KwInputs}[1]{\Statex \textbf{Input:} #1}
\newcommand{\KwOut}[1]{\Statex \textbf{Output:}  #1}
\newcommand{\KwRet}[1]{\State \textbf{return}  #1}
\newcommand{\KwDepend}[1]{\Statex \textbf{Depends on:} #1}
\newcolumntype{L}[1]{>{\raggedright\let\newline\\\arraybackslash\hspace{0pt}}m{#1}}
\newcolumntype{C}[1]{>{\centering\let\newline\\\arraybackslash\hspace{0pt}}m{#1}}
\newcolumntype{R}[1]{>{\raggedleft\let\newline\\\arraybackslash\hspace{0pt}}m{#1}}
\definecolor{vlightgrey}{gray}{0.975}
\newtheorem{theorem}{Theorem}[section]
\newtheorem{lemma}[theorem]{Lemma}
\newtheorem{prop}[theorem]{Proposition}
\newtheorem{corol}[theorem]{Corollary}
\theoremstyle{definition}
\newtheorem{assum}[theorem]{Assumption}
\theoremstyle{definition}
\newtheorem{ex0}[theorem]{Example}
\theoremstyle{remark}
\newtheorem{rem}[theorem]{Remark}
\theoremstyle{definition}
\newtheorem{dfn}[theorem]{Definition}
\newtheorem*{thm*}{Theorem}
\newcounter{cnt}
\newenvironment{workflow}[1]{\refstepcounter{cnt}
	\vspace{1em}\noindent
	\begin{minipage}{\textwidth}
		\rule{\textwidth}{1pt}
		\textbf{Workflow~\thecnt}: #1 \vspace{-1ex}\\ 
		\rule{\textwidth}{1pt}\vspace{-3ex}
	}{\vspace{-1ex}\rule{\textwidth}{1pt}\vspace{1.5em}\end{minipage}}
\Crefname{cnt}{Workflow}{Workflows}
\newenvironment{TraitV}[3]{%
	%
	\MakeFramed {\advance\hsize-\width}}%
{\endMakeFramed}
\definecolor{colorExLine}{rgb}{0.5372549, 0.9294118, 0.57647065}
\definecolor{colorAssumLine}{rgb}{0.961,0.306,0.259}
\newenvironment{assumb}[1]
{
	\MakeFramed {\advance\hsize-\width}
	\restatable{assum}{#1}
}
{
	\endrestatable\endMakeFramed
}
\newsavebox{\overlongequation}
\newcommand{\bi}{[\![}
\newcommand{\ei}{]\!]}
\newcommand{\e}{\mathbb{E}}
\newcommand{\cov}{\mathrm{Cov}}
\newcommand{\var}{\mathrm{Var}}
\newcommand{\C}{\mathbb{C}}
\newcommand{\R}{\mathbb{R}}
\newcommand{\N}{\mathbb{N}}
\DeclareFontFamily{U}{mathx}{\hyphenchar\font45}
\DeclareFontShape{U}{mathx}{m}{n}{<-> mathx10}{}
\DeclareSymbolFont{mathx}{U}{mathx}{m}{n}
\DeclareMathAccent{\widebar}{0}{mathx}{"73}
\newcommand{\Diag}{\mathop{\mathrm{Diag}}}
\newcommand{\q}[1]{``#1''}%
\newcommand{\peq}{.} 
\newcommand{\veq}{,} 
\newcommand{\speq}{.} 
\newcommand{\sveq}{,} 
\newcommand{\kl}{Karhunen--Lo\`{e}ve\xspace}
\newcommand{\di}{\mathrm{d}}
\newcommand{\dd}{\,\mathrm{d}}
\newcommand{\psds}{power spectral densities\xspace}
\newcommand{\psd}{power spectral density\xspace}
\newcommand{\sm}{Supplementary Materials\xspace}
\definecolor{darkgreen}{rgb}{0,.6,0}
\Crefname{prop}{Proposition}{Propositions}
\Crefname{corol}{Corollary}{Corollaries}
\Crefname{assum}{Assumption}{Assumptions}
\Crefname{rem}{Remark}{Remarks}
\Crefname{algorithm}{Algorithm}{Algorithms}
\Crefname{ex0}{Example}{Examples}
\Crefname{dfn}{Definition}{Definitions}
\begin{document}


\title[Galerkin--Chebyshev approximation of Gaussian random fields]{Galerkin--Chebyshev approximation of Gaussian random fields on compact Riemannian manifolds}
\author[A.~Lang]{Annika Lang} \address[Annika Lang]{\newline Department of Mathematical Sciences
	\newline Chalmers University of Technology \& University of Gothenburg
	\newline S--412 96 G\"oteborg, Sweden. } \email[Annika Lang]{annika.lang@chalmers.se}

\author[M.~Pereira]{Mike Pereira} \address[Mike Pereira]{\newline Centre for Geosciences and Geoengineering \newline
	 Mines Paris, PSL University \newline
	 77300 Fontainebleau, France} \email[Mike Pereira]{mike.pereira@minesparis.psl.eu}

\maketitle


\begin{abstract}
	A new numerical approximation method for a class of Gaussian random fields on compact connected oriented Riemannian manifolds is introduced. This class of random fields is characterized by the Laplace--Beltrami operator on the manifold. A Galerkin approximation is combined with a polynomial approximation using Chebyshev series. This so-called Galerkin--Chebyshev approximation scheme yields efficient and generic sampling algorithms for Gaussian random fields on manifolds. Strong and weak orders of convergence for the Galerkin approximation and strong convergence orders for the Galerkin--Chebyshev approximation are shown and confirmed through numerical experiments.
\end{abstract}

\medskip

\begin{quotation}
	\begin{footnotesize}
		\begin{description}[font=\normalfont\itshape]
			\item[Keywords]  \keywords{Gaussian random fields. Compact Riemannian manifolds. Galerkin approximation. Chebyshev polynomials. Strong convergence. Weak convergence. Laplace--Beltrami operator. Whittle--Mat\'ern random fields.}
			\item[Mathematics Subject Classification] \subjclass{60G60, 60H35, 60G15, 58J05, 58C40, 41A10, 65C30, 65M60}.
		\end{description}
	\end{footnotesize}
\end{quotation}

\medskip
\thanks{
	\textsc{Acknowledgement}.This work was partially supported by the Swedish Research Council (VR) through grant no.\ 2020-04170, by the Wallenberg AI, Autonomous Systems and Software Program (WASP) funded by the Knut and Alice Wallenberg Foundation, by the Chalmers AI Research Centre (CHAIR), and by the Simons Foundation Award No.\ 663281 granted to the Institute of Mathematics of the Polish Academy of Sciences for the years 2021--2023. The authors thank Christoph Schwab for his helpful comments.}


\section{Introduction}

Models for random fields defined on manifolds are of key importance in many application areas such as environmental sciences, geosciences and cosmological data analysis \cite{marinucci2011random}. While one area of interest is dealing with actual data that lies on surfaces and doing inference based on these data, we focus in this work on the primarily needed modeling and sampling of these random fields. More specifically, we propose a generic approach to define and numerically approximate a particular class of Gaussian random fields on (compact) Riemannian manifolds in a computationally efficient manner.

The main contributions of this work are the following. First, we propose a general approach to model and discretize a class of Gaussian random fields~$\mathcal{Z}$ defined on compact connected oriented Riemannian manifolds $\mathcal{M}$ via functions of the Laplace--Beltrami operator $-\Delta_{\mathcal{M}}$ of the manifold. We define the random field~$\mathcal{Z}$ through a series expansion, and derive a finite-dimensional approximation~$\mathcal{Z}_n$ on any finite-dimensional function space~$V_n$, e.g.\ a finite element space and not necessarily the spectral representation of the series expansion. To do so, we use (functions of) the Galerkin approximation of~$-\Delta_{\mathcal{M}}$ on~$V_n$. This approximation of the field allows us to give a closed form for the covariance matrix of the coefficients in basis representation of $\mathcal{Z}_n$, and hence an explicit way to sample these correlated random coefficients. Secondly, we propose an approximation of the discretized field $\mathcal{Z}_n$ based on Chebyshev polynomials which allows to sample these coefficients in a computationally efficient manner. Finally, we show convergence in mean-square and in the covariance of~$\mathcal Z_n$ to~$\mathcal Z$ and give the associated convergence rates. We also derive a convergence result for the root-mean-squared error induced by the Chebyshev approximation.

This approach, which we call Galerkin--Chebyshev approximation, provides efficient and scalable algorithms for computing samples of the discretized field. For instance, when defining the discretized field using a linear finite element space of dimension $n$, we obtain sampling costs that scale linearly with $n$ and with the order of the considered Chebyshev polynomial approximation, and storage costs that scale linearly with $n$. In particular, computational costs of {essentially} $\mathcal{O}(\epsilon^{-2/\rho})$ are then required to sample, with accuracy $\epsilon>0$, Gaussian random fields with a Matérn covariance function on a two-dimensional manifold (where $\rho$ denotes the rate at which the root-mean-squared error between the random field and its discretization converges to zero). 

So far the focus of the literature for random fields on manifolds has been on the sphere.
Extensive literature on the definition, properties, and efficient use of random fields on the sphere is available (see \cite{marinucci2011random} for a review). A first simulation approach aims at characterizing valid covariance functions on the sphere that model the correlation between two points using the arc length distance separating them \cite{gneiting2013strictly,Huang2011}. A second approach relies on the fact that stationary Gaussian random fields on the sphere have a basis expansion with respect to the spherical harmonic functions \cite{jones1963stochastic}. The resulting \kl expansion is used to derive simulation methods and to characterize the covariance structure of the resulting fields \cite{marinucci2011random,lang2015isotropic,Lantuejoul2019,Emery2019,CGLP20}.
Finally, models have also been proposed to deal with both space-time data \cite{porcu2016spatio} and anisotropy \cite{estrade2019covariance} on the sphere. Discretization methods that do not rely on \kl expansions are, for instance, using the existence of Parseval frames on the sphere~\cite{bachmayr2020multilevel} or relying on a regular discretization of the sphere, Markov properties, and fast Fourier transforms~\cite{creasey2018fast}.

However, the work done for random fields on a sphere hardly generalizes to other spatial domains, as they heavily rely  on the intrinsic properties of the sphere as a surface, and on the spherical harmonics. If now random fields on more general manifolds are of interest, Adler and Taylor \cite{adler2009random} provide a review of the theory used to define them, primarily focused on their geometry and excursion sets. The goal of this work is to propose and analyze a second approach, which generalizes the expansion approach on the sphere, and results in efficient algorithms for sampling Gaussian random fields on a manifold. Examples of samples of the resulting fields on different manifolds are shown in \Cref{fig:ex_sim} and show the flexibility of the approach, since it can be applied to widely different domains.

\begin{figure}
	\begin{subfigure}{0.33\textwidth}
		\centering
		\includegraphics[height=0.18\textheight]{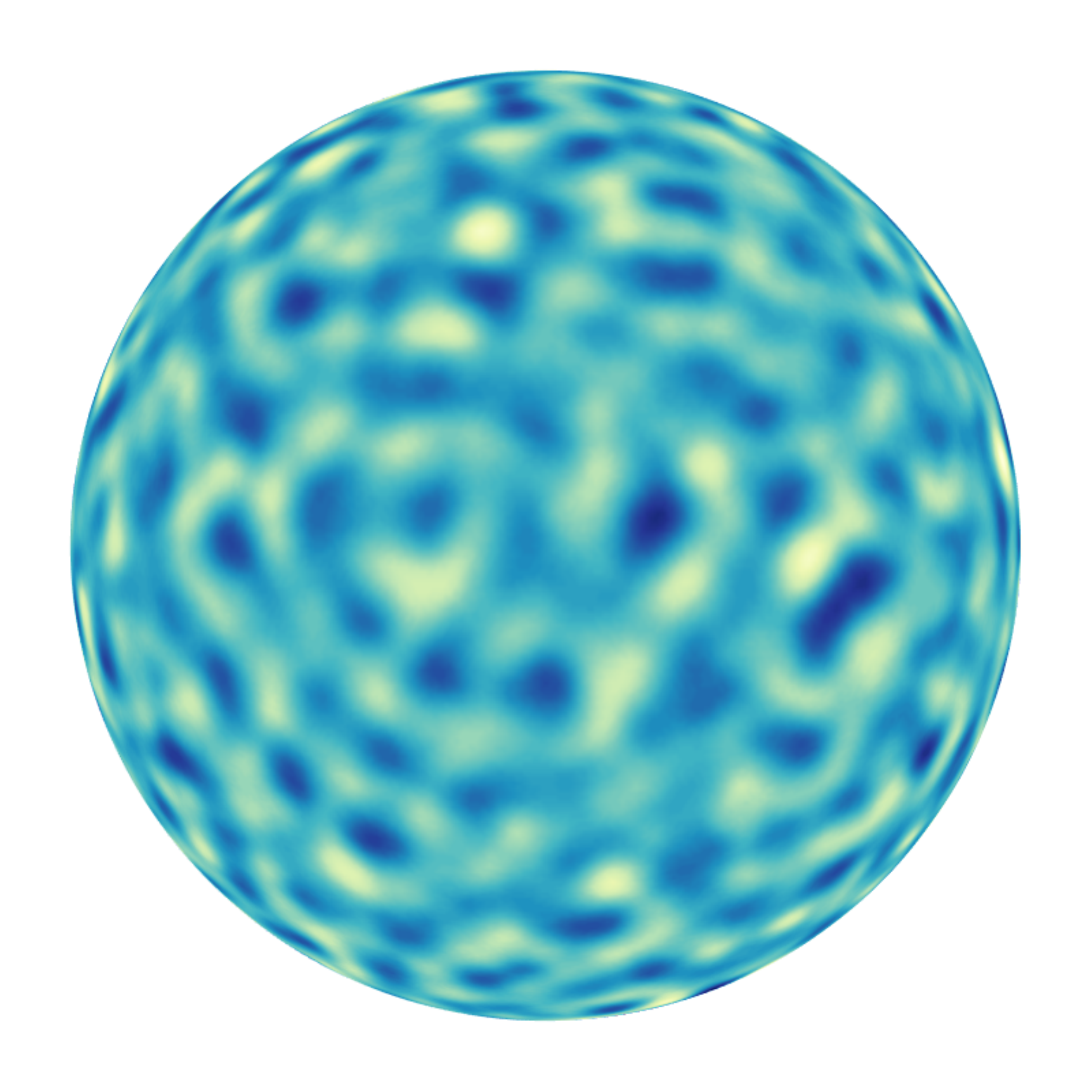}
	\end{subfigure}\hfill
	\begin{subfigure}{0.33\textwidth}
		\centering
		\includegraphics[height=0.18\textheight]{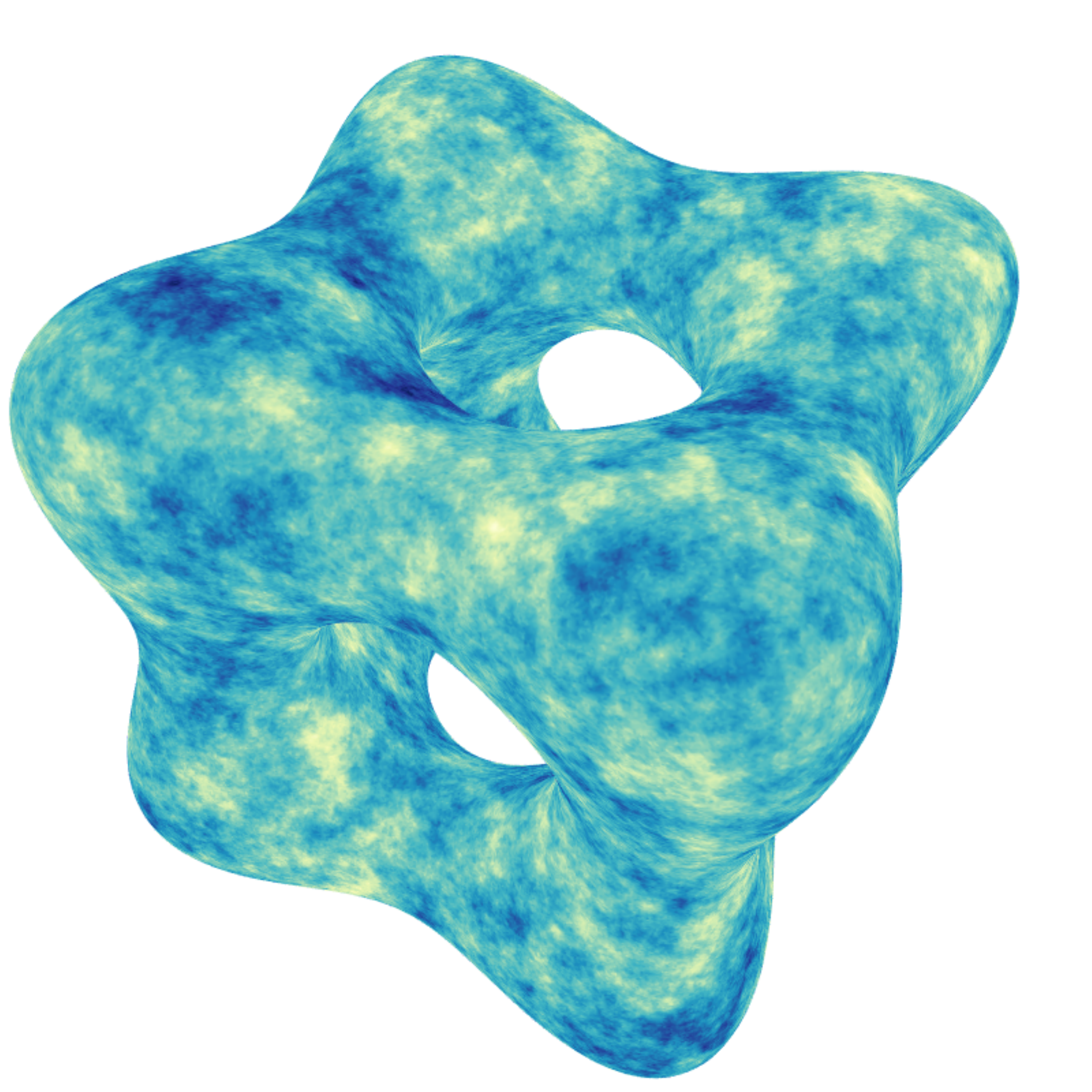}
	\end{subfigure}\hfill
	\begin{subfigure}{0.33\textwidth}
		\centering
		\includegraphics[height=0.18\textheight]{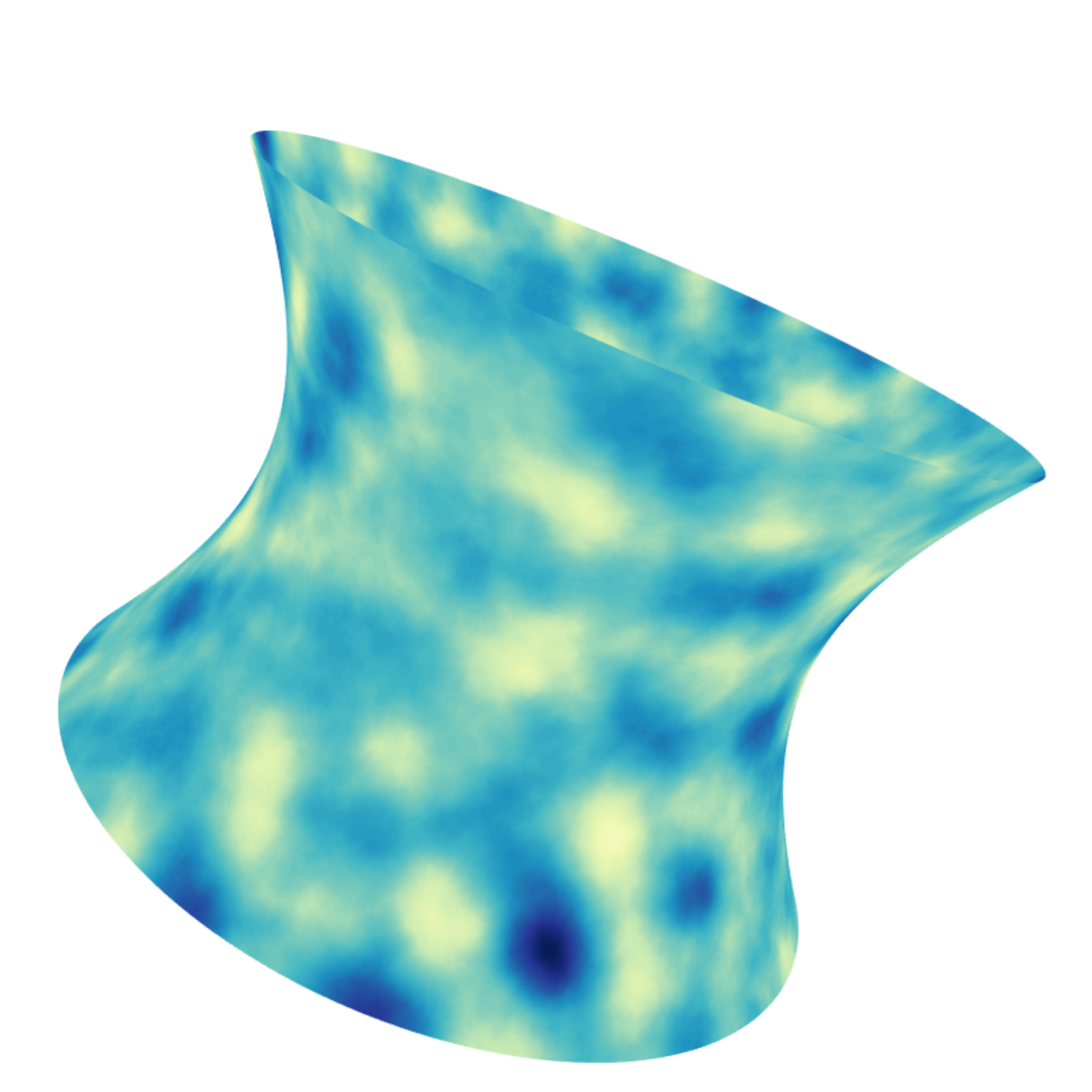}
	\end{subfigure}
	\caption{Simulations of Gaussian random fields on various (compact Riemannian) manifolds.}
	\label{fig:ex_sim}
\end{figure}

Our approach extends previous methods proposed for the numerical approximation of Gaussian random fields defined on manifolds. Several authors worked on the approximation of Gaussian random fields seen as solutions to stochastic partial differential equations (SPDEs), and in particular Whittle--Mat\'ern fields which were popularized by \citet{lindgren2011explicit}. A quadrature approximation allowed them to derive numerical approximations of such fields defined on bounded Euclidean domains \cite{BKK20,bolin2018weak} and even compact metric spaces~\cite{herrmann2020multilevel}. This approach requires to solve multiple (large but sparse) linear systems in order to generate samples of the random fields, and work has been done to find suitable and efficient preconditioners to tackle them~\cite{harbrecht2021multilevel}. In contrast, our approach  does not rely on the fact that the random field is the solution of some SPDE (since we do not require the function of~$-\Delta_{\mathcal{M}}$ to be invertible), but still includes Whittle--Mat\'ern fields as a particular case.  Also, the use of a Chebyshev polynomial approximation allows in some cases to avoid solving any linear system while generating samples.

The idea of using functions of the Laplacian to model Gaussian random fields on manifolds was recently investigated by \citet{borovitskiy2020mat} and \citet{borovitskiy21a}. 
Contrary to~\citet{borovitskiy2020mat}, our approach does not require an explicit approximation of the eigenvalues and eigenfunctions of the Laplace--Beltrami operator. Besides, we propose a convergence analysis, both in mean-square and covariance, of the approximations we propose. This analysis extends to the approximations in~\cite{borovitskiy2020mat}, as they can be seen as a particular instance of our more general framework. Finally, our work provides a theoretical justification for the use of functions of Laplacian matrices to model Gaussian fields on graphs, as proposed in \citet{borovitskiy21a}. Indeed, such matrices arise naturally when examining the discretization of random fields~\cite{pereira2019generalized}.

The outline of this paper is as follows. In \Cref{sec:back}, we present some background material on functional analysis on Riemannian manifolds and the class of Gaussian random fields considered in this work.  \Cref{sec:discr_genrf} is devoted to the Galerkin approximation of these random fields. Then, in \Cref{sec:cheb}, we introduce the Chebyshev polynomial approximation used to numerically compute the weights of the Galerkin-discretized random fields. In \Cref{sec:err} we expose the convergence analysis of the Galerkin and Chebyshev approximations and give the corresponding error estimates, and in \Cref{sec:compl} we present an analysis of the computational complexity and storage required to generate samples of a random field using its Galerkin--Chebyshev approximation. Finally, in \Cref{sec:num}, we confirm error estimates through numerical experiments on the sphere and a hyperboloid.

Throughout the paper, we denote by $\bm I$ the identity matrix and for any $a, b \in \N_0$  we write $\bi a, b\ei = \lbrace a, \dots, b\rbrace$ if $a\le b$, and adopt the convention $\bi a, b\ei = \emptyset$ if $a>b$. The entries of a vector $\bm u\in\R^n$ are denoted by $u_1,\dots,u_n$, and the entries of a matrix $\bm A\in\R^{n\times n}$ are denoted by $A_{ij}$, $1\le i,j\le n$. If $\bm X$ is a Gaussian vector with mean $\mu$ and covariance matrix $\bm\Sigma$, we write $\bm X \sim \mathcal{N}(\bm \mu, \bm\Sigma)$. Finally, for any two functions $f$ and $g$ depending on some argument $x\in\R$, and for $a\in\lbrace 0, +\infty\rbrace$, we write $f(x)=\mathcal{O}(g(x))$ if $f$ is asymptotically bounded by $g$ as $x\rightarrow a$, i.e. if there exists some constant $M_a$ independent of $x$ such that $\vert f(x)\vert \le M_a \vert g(x) \vert$ when $x\rightarrow a$.

\section{Functional analysis background and random fields on manifolds}
\label{sec:back}

\subsection{Laplace--Beltrami operator on a compact Riemannian manifold}

We first introduce a few notions of Riemannian geometry, and refer the interested reader to \cite{berard2006spectral,jost2008riemannian,lablee2015spectral}  and the references therein for a more in-depth introduction on the subject.

Let $(\mathcal{M},g)$ be a compact connected oriented Riemannian manifold of dimension $d\ge 1$, such that  $\mathcal{M}$ has either a smooth boundary $\partial\mathcal{M}$ or no boundary at all ($\partial\mathcal{M}=\emptyset$). 
A function $f: \mathcal{M} \rightarrow \R$ is called \emph{smooth} if for any coordinate patch $(U, \phi)$ (where $U\subset \mathcal{M}$ and $\phi : U \rightarrow \R^d$ defines local coordinates on $U$), the function $f\circ\phi^{-1}$ is a smooth function from $\R^d$ to $\R$. Let then $C^{\infty}(\mathcal{M})$ be the set of smooth functions from $\mathcal{M}$ to $\R$.
The gradient operator $\nabla_{\mathcal{M}}$ acting on functions of $C^{\infty}(\mathcal{M})$ associates to each $f\in C^{\infty}(\mathcal{M})$ the vector field $\nabla_{\mathcal{M}}f$ described in local coordinates $(x^1, \dots, x^d)$ by
\begin{equation*}
\nabla_{\mathcal{M}}f =\sum_{i=1}^d\sum_{j=1}^d g^{ij}\;\frac{\partial(f\circ\phi^{-1})}{\partial x^i}\; \frac{\partial\;}{\partial x^j},
\end{equation*}
where $\phi$ denotes the local chart associated with the coordinates and $(g^{ij})_{1\le i,j\le d}$ is the inverse of the metric tensor $g=(g_{ij})_{1\le i,j\le d}$. Similarly, the Laplace--Beltrami operator $-\Delta_{\mathcal{M}}$ acting on functions of $C^{\infty}(\mathcal{M})$ associates to each $f\in C^{\infty}(\mathcal{M})$ the function $-\Delta_{\mathcal{M}}f$ described  by
\begin{equation*}
-\Delta_{\mathcal{M}}f=-\frac{1}{\sqrt{\vert g\vert}}\, \sum_{i=1}^d\sum_{j=1}^d \frac{\partial\;}{\partial x^i}\bigg(\sqrt{\vert g\vert}\, g^{ij}\,\frac{\partial(f\circ\phi^{-1})}{\partial x^j}\bigg) \veq
\end{equation*}
where $\vert g\vert$ is the determinant of the metric tensor $g$.  Note in particular that both definitions are independent of the choice of local charts and associated local coordinates.

Let $\di v_g$ denote the canonical measure of $(\mathcal{M}, g)$, which is given by
\begin{equation*}
\di v_g=\sqrt{\vert g\vert} \,\di x^1\cdots\di x^d,
\end{equation*}
where $\di x^1\cdots\di x^d$ denotes the standard Lebesgue measure on~$\R^d$. We denote by $H=L^2(\mathcal{M}, g)$ the space of square-integrable functions on $(\mathcal{M},g)$, which is defined as
\begin{equation*}
H=L^2(\mathcal{M}, g)=\big\lbrace f : \mathcal{M} \rightarrow \R \text{ measurable  : } \int_{\mathcal{M}} \vert f\vert^2 \;\di v_g < +\infty\big\rbrace.
\end{equation*}
In particular, $H$ is a Hilbert space when equipped with the inner product $(\cdot, \cdot)_0$ defined by
\begin{equation*}
(f_1, f_2)_0=\int_{\mathcal{M}} f_1f_2 \;\di v_g, \quad f_1, f_2 \in H,
\end{equation*}
and we denote by $\Vert\cdot\Vert_0$ the norm associated with this inner product.


Consider the eigenvalue problem 
\begin{equation*}
-\Delta_{\mathcal{M}} \phi = \lambda \phi, \quad  \phi \in C^{\infty}(\mathcal{M})\backslash\lbrace 0\rbrace, \quad \lambda \in\R
\end{equation*}
with Dirichlet or (homogeneous) Neumann boundary conditions whenever $\partial\mathcal{M}\neq \emptyset$.
A standard result of spectral theory  \cite[Theorem 4.3.1]{lablee2015spectral} states that this problem admits solutions in the form of a set of eigenpairs $(\lambda_k, e_k)_{k\in\N}$, where $\lambda_k\ge 0$ and such that each eigenvalue has a finite multiplicity, the eigenspaces corresponding to distinct eigenvalues are $H$-orthogonal, and the direct sum of the eigenspaces is dense in $H$.
Hence this theorem provides a decomposition of any function $f\in H$ into an orthonormal basis $\lbrace e_k \rbrace_{k\in\N}$  of eigenfunctions of $-\Delta_{\mathcal{M}}$, as

\begin{equation*}
f = \sum_{k\in\N} ( e_k, f)_0 \,e_k \veq
\end{equation*}
where the equality is understood in the $L^2$-sense. 

Without loss of generality, we assume in the remainder of this paper that the eigenpairs of $-\Delta_{\mathcal{M}}$ are ordered so that $0\le \lambda_{1} \le \lambda_{2}\le \cdots$. In particular we have $\lambda_{1}=0$ whenever $\partial\mathcal{M}=\emptyset$ or Neumann boundary conditions are considered, and $\lambda_{1}>0$ when Dirichlet boundary conditions are considered \cite[Proposition 4.5.6]{lablee2015spectral}. Hence, in this work, the multiplicity $M_0$ of the eigenvalue $0$ satisfies $M_0 \in \lbrace 0, 1\rbrace$. The following can be stated about the growth rate of the eigenvalues.
\begin{prop}[Weyl's asymptotic law]
	\label{prop:weyl}
	For  $\alpha =2/d$, there exist constants $c_\lambda>0$ and $C_\lambda>0$ such that all non-negative eigenvalues $\lbrace \lambda_j\rbrace_{j\in\N}$ satisfy
	$$c_\lambda j^\alpha \le \lambda_j \le C_\lambda j^\alpha.$$
\end{prop}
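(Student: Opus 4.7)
The plan is to obtain both bounds as corollaries of the classical Weyl asymptotic formula for the eigenvalue counting function of $-\Delta_{\mathcal{M}}$. Introducing $N(\lambda) := \#\{k \in \N : \lambda_k \le \lambda\}$, the standard Weyl law (e.g.~\cite{berard2006spectral,lablee2015spectral}), valid for any compact Riemannian manifold with no boundary or with smooth boundary under Dirichlet/Neumann conditions, asserts that
\[
N(\lambda) = \frac{\omega_d\,\mathrm{Vol}_g(\mathcal{M})}{(2\pi)^d}\,\lambda^{d/2} + o(\lambda^{d/2}) \quad \text{as } \lambda \to +\infty,
\]
where $\omega_d$ is the Lebesgue measure of the unit ball in $\R^d$. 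This is the only non-trivial input, and I would simply quote it from a reference on spectral geometry.

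Next I invert this relation at $\lambda = \lambda_j$. By definition, $N(\lambda_j) \ge j$ while $\lim_{\epsilon \downarrow 0} N(\lambda_j - \epsilon) \le j-1$, so $N(\lambda_j) = j + \mathcal{O}(1)$ for large $j$. Substituting into the Weyl asymptotic and solving for $\lambda_j$ yields the sharp equivalence
\[
\lambda_j \sim \bigg(\frac{(2\pi)^d}{\omega_d\,\mathrm{Vol}_g(\mathcal{M})}\bigg)^{2/d} j^{2/d}, \qquad j \to \infty.
\]
Consequently there exist constants $0 < \tilde c < \tilde C$ and an integer $J_0$ such that $\tilde c\, j^{2/d} \le \lambda_j \le \tilde C\, j^{2/d}$ for all $j \ge J_0$.

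Finally, to absorb the finitely many remaining indices $j \in \bi M_0 + 1, J_0 - 1\ei$, on which $\lambda_j > 0$, one shrinks $\tilde c$ and enlarges $\tilde C$ so that the inequalities remain valid throughout. The zero eigenvalue $\lambda_1 = 0$, arising only when $M_0 = 1$, is a minor edge case in the statement: one either restricts the lower bound to the positive eigenvalues, or reindexes the spectrum so that $j$ starts from the first nonzero eigenvalue, and the same upper bound is unaffected.

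The genuine difficulty is hidden in the Weyl law itself, whose standard proof combines Dirichlet--Neumann bracketing with the min--max principle: one covers $\mathcal{M}$ by small coordinate patches which are uniformly close to Euclidean cubes, sandwiches $N(\lambda)$ between sums of eigenvalue counts on these cubes with Dirichlet and Neumann boundary conditions, and concludes by the explicit Weyl formula on rectangles after letting the cube diameter tend to zero. Since this is entirely classical, the proof in this paper amounts to citing the sharp asymptotic and performing the elementary inversion and finite-index adjustment described above.
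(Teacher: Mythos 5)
Your proposal is correct and matches the paper's treatment: the paper offers no independent argument but simply cites Weyl's asymptotic formula for the counting function (referencing \cite[Theorem 7.6.4]{lablee2015spectral}), which is exactly the classical input you quote, followed by the routine inversion $N(\lambda_j)=j+\mathcal{O}(1)$ and adjustment of constants over finitely many indices. Your additional remark about the zero eigenvalue when $M_0=1$ is a sensible clarification of the phrase \q{non-negative eigenvalues} but does not change the substance.
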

This property is a direct consequence of Weyl's asymptotic formula which holds for connected compact Riemannian manifolds of dimension $d$ and states that the constants $c_\lambda$ and $C_\lambda$ depend on $d$ and on the volume of the manifold \cite[Theorem 7.6.4]{lablee2015spectral}.

\subsection{Function spaces on a compact Riemannian manifold}\label{sec:funcsp}

The Sobolev space $H^1$ is defined as the completion of $C^{\infty}(\mathcal{M})$ with respect to the norm $\Vert \cdot\Vert_{H^1}$ defined by
\begin{equation*}
\Vert f \Vert_{H^1}^2 = \Vert f\Vert_0^2 + \Vert \nabla_{\mathcal{M}} f\Vert_0^2, \quad f\in C^{\infty}(\mathcal{M}).
\end{equation*}
This space is a Hilbert space when equipped with the inner product $(\cdot, \cdot)_{H^1}$ defined by
\begin{equation*}
(f_1, f_2)_{H^1}=(f_1, f_2)_0 + (\nabla_{\mathcal{M}} f_1, \nabla_{\mathcal{M}} f_2)_0, \quad f_1, f_2 \in H^1.
\end{equation*}
In particular, the definition of the gradient operator is here extended to functions of $H^1$ using a density argument. More generally, Sobolev spaces of fractional order $H^\sigma$, $\sigma> 0$, can be defined on compact Riemannian manifolds by stating that $f\in H^\sigma$ when, for any coordinate patch $(U, \phi)$, and any function $\psi$ with compact support in $U$, the function $(f\psi)\circ \phi^{-1}$ belongs to the Sobolev space $H^\sigma(\R^d)$ as usually defined on $\R^d$ \cite[Chapter 4, Section 3]{taylor1996partial}. For $\sigma=1$, this last characterization coincides with our used definition of~$H^1$.
Finally, let $\sigma\ge 0$ and let $\mathcal{F} \subset H$ be the space of finite linear combinations of the eigenfunctions $\lbrace e_k\rbrace_{k \in\N}$ of $-\Delta_{\mathcal{M}}$.
Following the definition of spaces of generalized functions on manifolds introduced by Taylor \cite[Chapter 5, Section A]{taylor1996partial}, let $\dot{H}^\sigma$ be the completion of $\mathcal{F}$ under the norm $\Vert \cdot\Vert_{\sigma}$ defined by
\begin{equation*}
\Vert f\Vert_{\sigma}^2
=\sum_{k\in\bi 1, M_0\ei}  \vert ( f, e_k)_{0} \vert^2 + \sum_{k>M_0} \lambda_k^\sigma \vert ( f, e_k)_{0}\vert^2,
\end{equation*}
where by convention the first sum vanishes if $M_0=0$. In particular, we have $\dot{H}^0=H$ and more generally, $\dot{H}^\sigma$ is a Hilbert space when equipped with the inner product $(\cdot, \cdot)_\sigma$ defined by
\begin{equation}
(f_1, f_2)_\sigma = \sum_{k\in\bi 1, M_0\ei} ( f_1, e_k)_{0}( f_2, e_k)_{0} + \sum_{k>M_0} \lambda_k^\sigma ( f_1, e_k)_{0}( f_2, e_k)_{0} , \quad f_1, f_2\in \dot{H}^\sigma.
\label{eq:ip}
\end{equation} 

\begin{rem}
	When manifolds without boundary are considered, the definition of $\dot{H}^\sigma$ given above is equivalent to the definition of the fractional Sobolev space of order~$\sigma$ through Bessel potentials (used for instance by \citet{strichartz1983analysis} or \citet{herrmann2018numerical}). Indeed, recall that the latter is defined as the subspace of $H$ composed of functions $f\in H$ satisfying $ \Vert f \Vert_\sigma' < +\infty$, where $ \Vert \cdot \Vert_\sigma'$ is the norm defined by
	\begin{equation*}
	\Vert f\Vert_\sigma'=\bigg(\sum_{k\in\mathbb{N}} (1+\lambda_k)^\sigma \vert ( f, e_k)_{0}\vert^2\bigg)^{1/2}, \quad f\in \dot{H}^\sigma.
	\end{equation*}
	Equivalence follows from the equivalence of the norms $\Vert \cdot \Vert_\sigma$ and $\Vert \cdot\Vert_\sigma'$: we have
	\begin{equation*}
	\Vert f\Vert_\sigma \le \Vert f \Vert_\sigma' \le \big(1+\lambda_{M_0+1}^{-1}\big)^\sigma \Vert f\Vert_\sigma, \quad f\in \dot{H}^\sigma.
	\end{equation*}
	When manifolds with boundary are considered, and $\sigma>0$, $\dot{H}^\sigma$ can be seen as a subspace of a fractional Sobolev space composed of functions satisfying the same boundary conditions as the ones considered for the eigenvalue problem of the Laplace--Beltrami operator \cite[Chapter 5, Section A]{taylor1996partial}.
\end{rem}								

For $\sigma<0$, we define $\dot{H}^\sigma$ to be the dual space of $\dot{H}^{-\sigma}$: these spaces are Hilbert spaces when endowed with the inner product~\eqref{eq:ip}, and their elements are seen as distributions \cite{strichartz1983analysis}.


\subsection{Functions of the Laplacian}
\label{sec:func_lap}

We now introduce a class of operators acting on $H$, called functions of the Laplacian. These operators are classically used to express solutions of some differential equations and to prove Weyl's asymptotic formula \cite{bouclet2012introduction}. To define functions of the Laplacian, we first introduce the notion of \emph{\psd}.

\begin{dfn}\label{def:psd}
	A \emph{\psd}  is a function $\gamma : [0,+\infty) \rightarrow \mathbb{R}$ with the following properties. First, there exists some $\nu\in\N$ for which $\gamma$ is  $\nu$ times differentiable, with continuous derivatives up to order $(\nu-1)$ and a derivative of order $\nu$ of bounded variation.
	Second, $\gamma(\lambda) \rightarrow 0$ as $\lambda \rightarrow \infty$. And finally, 
	there exist constants $L_\gamma, C_{\gamma}', \beta >0$ such that for all $\lambda \ge L_\gamma$, the first derivative $\gamma'$ of $\gamma$ satisfies
\begin{equation*}
\vert \gamma'(\lambda)\vert \le C_{\gamma}' \vert \lambda\vert ^{-(1+\beta)}.
\label{eq:assum_gamma_b}
\end{equation*}
Note in particular that these last two conditions imply that there exists $C_{\gamma}>0$ such that 
\begin{equation*}
	\left\vert \gamma(\lambda) \right\vert \le C_\gamma \vert\lambda\vert^{-\beta}, \quad \lambda\ge L_\gamma.
\end{equation*}
\end{dfn}

In particular,  the \psd considered in this work should satisfy the relation given in the next assumption.

\begin{assumb}{}
	The \psd considered in this work satisfy the relation 
		\begin{equation*}
		2\alpha \beta -1 > 0
	\end{equation*}
	where $\beta>0$ is defined in \Cref{def:psd} and $\alpha>0$ is defined in \Cref{prop:weyl}.
	\label{assum:alphabeta}
\end{assumb}

This assumption allows us to define the notion of functions of Laplacian as a endomorphism of $H$. Indeed, 
given a \psd $\gamma$ satisfying \Cref{assum:alphabeta}, we define the \textit{function of the Laplacian} $\gamma(-\Delta_{\mathcal{M}})$ associated with $\gamma$ as the operator $\gamma(-\Delta_{\mathcal{M}}) : H \rightarrow H$ given by:
\begin{equation*}
\gamma(-\Delta_{\mathcal{M}})f=\sum_{k\in\N}\gamma(\lambda_k)( f, e_k)_0 \,e_k, \quad f\in H.
\label{eq:def_func_lap}
\end{equation*}
The next proposition extends the domain of this operator.
\begin{prop}
	Let \Cref{assum:alphabeta} be satisfied. For any $\sigma\in\R$,  the function of the Laplacian $\gamma(-\Delta_{\mathcal{M}})$ can be extended to an operator (also denoted $\gamma(-\Delta_{\mathcal{M}})$ with a slight abuse of notation)
	\begin{equation*}
	\gamma(-\Delta_{\mathcal{M}}) : \dot{H}^{\sigma} \rightarrow \dot{H}^{\sigma+2\beta},
	\end{equation*}
where $\alpha>0$ and $\beta>0$ are defined respectively in \Cref{prop:weyl} and \Cref{def:psd}.
	\label{prop:dom_def_flap}
\end{prop}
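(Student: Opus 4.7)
The plan is to extend $\gamma(-\Delta_{\mathcal{M}})$ to $\dot{H}^\sigma$ directly via its spectral formula: for $f\in\dot{H}^\sigma$ with coefficients $c_k=(f,e_k)_0$ (interpreted as the duality pairing $\langle f,e_k\rangle$ when $\sigma<0$), I would set
\[
\gamma(-\Delta_{\mathcal{M}})f:=\sum_{k\in\N}\gamma(\lambda_k)\,c_k\,e_k.
\]
Since $\mathcal{F}$ is dense in each $\dot{H}^\sigma$ (by construction for $\sigma\ge 0$, and via truncation of the spectral expansion for $\sigma<0$), it then suffices to prove a bound of the form $\|\gamma(-\Delta_{\mathcal{M}})f\|_{\sigma+2\beta}\le C\|f\|_\sigma$ on $\mathcal{F}$; the operator extends uniquely by continuity, and the extension coincides with the original definition on $H=\dot{H}^0$.

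For the norm estimate, I would plug the spectral representation into the definition \eqref{eq:ip} of $\|\cdot\|_{\sigma+2\beta}$ to obtain
\[
\|\gamma(-\Delta_{\mathcal{M}})f\|_{\sigma+2\beta}^2 = \sum_{k\in\bi 1,M_0\ei}|\gamma(0)|^2|c_k|^2 \;+\; \sum_{k>M_0}\lambda_k^{\sigma+2\beta}|\gamma(\lambda_k)|^2|c_k|^2,
\]
and control the two sums separately. The first sum (empty if $M_0=0$) is trivially bounded by $|\gamma(0)|^2\|f\|_\sigma^2$, with $|\gamma(0)|<\infty$ by the continuity of $\gamma$ guaranteed by \Cref{def:psd}. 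For the second sum, the idea is to split the index set at the threshold $L_\gamma$ from \Cref{def:psd}. On the high-frequency part $\{k:\lambda_k\ge L_\gamma\}$, the decay estimate $|\gamma(\lambda_k)|\le C_\gamma\lambda_k^{-\beta}$ yields $\lambda_k^{\sigma+2\beta}|\gamma(\lambda_k)|^2\le C_\gamma^2\lambda_k^\sigma$, so this piece is bounded by $C_\gamma^2\|f\|_\sigma^2$. On the complementary low-frequency part $\{k>M_0:\lambda_k<L_\gamma\}$, \Cref{prop:weyl} ensures the index set is finite; since $\gamma$ is continuous on the compact interval $[0,L_\gamma]$ and $\lambda_k$ lies in $[\lambda_{M_0+1},L_\gamma)$ with $\lambda_{M_0+1}>0$, the ratio $\lambda_k^{\sigma+2\beta}|\gamma(\lambda_k)|^2/\lambda_k^\sigma$ is uniformly bounded there, producing again a contribution of at most a constant times $\|f\|_\sigma^2$.

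Combining the three estimates delivers the continuity bound $\|\gamma(-\Delta_{\mathcal{M}})f\|_{\sigma+2\beta}\le C(\gamma,\sigma)\|f\|_\sigma$ on $\mathcal{F}$, from which the desired extension follows. I do not anticipate any serious obstacle: the whole argument rests on the decay and continuity properties packaged in \Cref{def:psd}, combined with the finite-multiplicity and spectral growth given by \Cref{prop:weyl}; in particular, \Cref{assum:alphabeta} is not actually needed here, as it only enters later to ensure trace-class properties of the covariance operator $\gamma(-\Delta_{\mathcal{M}})^2$. The only mild point of care is to keep the $k\le M_0$ indices separated from the rest throughout — otherwise the weight $\lambda_k^{\sigma+2\beta}$ would be singular for $\sigma<0$ — but this separation is built into the definition of the $\dot{H}^\sigma$-norm itself.
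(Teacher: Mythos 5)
Your proposal is correct and follows essentially the same route as the paper: the identical spectral norm computation, with the second sum controlled by splitting at the threshold $L_\gamma$ into a finite low-frequency part (bounded by continuity of $\gamma$ and $\lambda_{M_0+1}>0$) and a high-frequency part bounded via $\vert\gamma(\lambda_k)\vert\le C_\gamma\lambda_k^{-\beta}$ — the paper packages these bounds into the single constant $R_\gamma$ in~\eqref{eq:K_gamma}. Your side remark that \Cref{assum:alphabeta} is not actually used in this argument is also accurate.
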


\begin{proof}
	Let $\sigma\in\R$ and $f\in \dot{H}^{\sigma}$. 
	
	\begin{equation*}
	\begin{aligned}
	\Vert \gamma(-\Delta_{\mathcal{M}})f \Vert_{\sigma+2\beta}^2
	&=\sum_{k\in\bi 1, M_0\ei}  \vert \gamma(\lambda_k)( f, e_k)_0 \vert^2 +\sum_{k>M_0} \lambda_k^{\sigma+2\beta} \vert \gamma(\lambda_k)( f, e_k)_0 \vert^2 \\
	&= \vert \gamma(0) \vert^2\sum_{k\in\bi 1, M_0\ei}  \vert ( f, e_k)_0 \vert^2 +\sum_{k>M_0} \lambda_k^{\sigma}\vert\lambda_k^{\beta}\gamma(\lambda_k)\vert^2 \vert ( f, e_k)_0 \vert^2.
	\end{aligned}
	\end{equation*}
	
	Following \Cref{def:psd}, and since $\lambda_{k} \rightarrow +\infty$ as $k\rightarrow +\infty$, we set
	\begin{equation}
	R_\gamma=\max\lbrace \vert\gamma(0)\vert, C_\gamma,\vert\lambda_{M_0+1}^{\beta}\gamma(\lambda_{M_0+1})\vert, \dots, \vert\lambda_{K_\gamma}^{\beta}\gamma(\lambda_{K_\gamma})\vert \rbrace,
	\label{eq:K_gamma}
	\end{equation}
	where $K_\gamma = \sup\lbrace k \in \mathbb{N} : \lambda_{k} < L_{\gamma}\rbrace$.
	We then obtain that $\gamma(-\Delta_{\mathcal{M}})f\in \dot{H}^{\sigma+2\beta}$ since
	\begin{equation*}
	\pushQED{\qed} 
	\begin{aligned}
	\Vert \gamma(-\Delta_{\mathcal{M}})f \Vert_{\sigma+2\beta}^2
	&\le R_\gamma^2\bigg(\sum_{k\in\bi 1, M_0\ei}  \vert ( f, e_k)_0 \vert^2 +\sum_{k>M_0} \lambda_k^{\sigma}\vert ( f, e_k)_0 \vert^2\bigg)
	= R_\gamma^2\Vert f\Vert_{\sigma}^2<+\infty.
	\end{aligned}\qedhere
	\popQED
	\end{equation*}
\end{proof}

Note in particular that \Cref{prop:dom_def_flap} implies that, for all $\sigma\ge -2\beta$, $\gamma(-\Delta_{\mathcal{M}})$ maps $\dot{H}^\sigma$ into (a subspace of)~$H$. 


\subsection{Random fields on a Riemannian manifold}
\label{sec:fem_grf}

Let us start by introducing some notation. Let $(\Omega, \mathcal{A}, \mathbb{P})$ be a complete probability space.  Let $Q$ denote some arbitrary Hilbert space (with inner product $(\cdot, \cdot)_Q$ and associated norm $\Vert \cdot\Vert_{Q}$). We denote by $L^2(\Omega; Q)$ the set of all $Q$-valued random variables defined on $(\Omega, \mathcal{A},\mathbb{P})$  satisfying, for any $\mathcal{Z} \in L^2(\Omega; Q)$, $\e[\mathcal{Z}]=0$ and  $\e[\Vert \mathcal{Z}\Vert_{Q}^2]<+\infty$. In particular, this implies that any $\mathcal{Z}\in L^2(\Omega; Q)$ is almost surely in $Q$. Finally, note that $L^2(\Omega; Q)$ is a Hilbert space when equipped with the inner product $( \cdot, \cdot)_{L^2(\Omega; Q)}$ (and associated norm $\Vert \cdot\Vert_{L^2(\Omega; Q)}$) defined by
\begin{equation*}
(\mathcal{Z}, \mathcal{Z}')_{L^2(\Omega; Q)}=\e\left[(\mathcal{Z}, \mathcal{Z}')_Q\right], \quad \mathcal{Z}, \mathcal{Z}'\in L^2(\Omega; Q).
\end{equation*}


We now define the notion of Gaussian white noise on the manifold $\mathcal{M}$.
Let $\lbrace W_j \rbrace_{j\in\mathbb{N}}$ be a sequence of independent, standard Gaussian random variables.
The linear functional $\mathcal{W}$ defined over $H$ by
\begin{equation}
\mathcal{W}: \varphi \in H \mapsto \langle \mathcal{W}, \varphi\rangle=\sum_{j\in\mathbb{N}}  W_j(\varphi, e_j)_{0}
\label{eq:def_wn}
\end{equation}
is called \textit{Gaussian white noise} on $\mathcal{M}$. Note that for any $\varphi\in H$, the series $\langle \mathcal{W}, \varphi\rangle$ converges in quadratic mean since $\e\left[\langle \mathcal{W}, \varphi\rangle\right]=0$ and by independence of the variables $\lbrace W_k\rbrace_{k\in\N}$, 
\begin{equation*}
\e\big[\vert \langle \mathcal{W}, \varphi\rangle\vert^2\big]
=\e\bigg[ \sum_{j\in\mathbb{N}}\sum_{k\in\mathbb{N}}   W_j(\varphi, e_j)_{0}{W_k(\varphi, e_k)_{0}}\bigg]
=\sum_{j\in\mathbb{N}} \vert(\varphi, e_j)_{0}\vert^2
=\Vert\varphi\Vert_0^2 <+\infty \peq
\end{equation*}
In particular, $\mathcal{W}$ satisfies, for any $\varphi \in H$, $\e\left[\langle \mathcal{W},\varphi\rangle\right]=0$, and for any $\varphi_1, \varphi_2 \in H$,
\begin{equation*}
\cov\left[\langle \mathcal{W},\varphi_1\rangle, \langle \mathcal{W},\varphi_2\rangle\right]=(\varphi_1, \varphi_2)_0 \peq
\label{eq:cov_func_wn}
\end{equation*}
The next proposition details the domain of definition and regularity of $\mathcal{W}$.

\begin{prop}
	For any $\epsilon >0$, $\mathcal{W}\in L^2(\Omega; \dot{H}^{-(\alpha^{-1} + \epsilon)})$, where $\alpha>0$ is given in  \Cref{prop:weyl}.
	\label{prop:reg_w}
\end{prop}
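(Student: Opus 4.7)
Set $s := \alpha^{-1}+\epsilon$. The strategy is to realize $\mathcal{W}$ as the $L^2(\Omega; \dot{H}^{-s})$-limit of the finite partial sums
\begin{equation*}
\mathcal{W}_N := \sum_{j=1}^N W_j\, e_j, \qquad N\in\N,
\end{equation*}
each of which belongs to $H \subset \dot{H}^{-s}$ and therefore trivially to $L^2(\Omega;\dot{H}^{-s})$. Since this Bochner space is a Hilbert space and hence complete, it will suffice to show that $(\mathcal{W}_N)_N$ is Cauchy there, and then to identify its limit with the white-noise functional \eqref{eq:def_wn}.

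\textbf{Main estimate.} Reading off the coefficients $(\mathcal{W}_N, e_k)_0 = W_k \ind_{k\le N}$ directly from the definition of $\mathcal{W}_N$ and plugging them into the norm induced by \eqref{eq:ip} with $\sigma = -s$, I would obtain, for $M>N\ge M_0$, using independence and $\e[W_k^2]=1$,
\begin{equation*}
\e\bigl[\Vert \mathcal{W}_M - \mathcal{W}_N\Vert_{-s}^2\bigr] = \sum_{k=N+1}^{M} \lambda_k^{-s}.
\end{equation*}
Weyl's asymptotic law (\Cref{prop:weyl}) yields $\lambda_k^{-s} \le c_\lambda^{-s}\, k^{-\alpha s} = c_\lambda^{-s}\, k^{-(1+\alpha\epsilon)}$, and since $\alpha\epsilon>0$ the tail of this series vanishes as $N\to\infty$. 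The sequence $(\mathcal{W}_N)_N$ is therefore Cauchy in $L^2(\Omega;\dot{H}^{-s})$, and its limit $\tilde{\mathcal{W}}$ satisfies
\begin{equation*}
\e\bigl[\Vert \tilde{\mathcal{W}}\Vert_{-s}^2\bigr] = M_0 + \sum_{k>M_0}\lambda_k^{-s} < +\infty.
\end{equation*}

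\textbf{Identification with $\mathcal{W}$.} For any test function $\varphi\in\dot{H}^s\subset H$, continuity of the duality pairing $\dot{H}^{-s}\times\dot{H}^s\to\R$ gives $\langle\tilde{\mathcal{W}},\varphi\rangle = \lim_N\langle\mathcal{W}_N,\varphi\rangle$ in $L^2(\Omega;\R)$, and the right-hand side rewrites as $\lim_N \sum_{j=1}^N W_j (\varphi, e_j)_0$, which is exactly $\langle\mathcal{W},\varphi\rangle$ by \eqref{eq:def_wn} and the quadratic-mean convergence already verified in the excerpt. Hence $\tilde{\mathcal{W}} = \mathcal{W}$ as elements of $L^2(\Omega;\dot{H}^{-s})$.

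\textbf{Anticipated difficulty.} The argument is a textbook Hilbert-space truncation combined with a single application of Weyl's law, so no substantive obstacle is expected. The only point requiring minor care is the possible null eigenvalue when $M_0=1$: it must be kept in the separate first block of the norm \eqref{eq:ip} so that every $\lambda_k^{-s}$ appearing above is well defined, which also motivates the restriction $N\ge M_0$ in the Cauchy computation.
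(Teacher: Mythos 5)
Your proposal is correct and follows essentially the same route as the paper: truncate the white noise, compute the $\dot{H}^{-(\alpha^{-1}+\epsilon)}$-norm of the partial sums using independence of the $W_j$, and bound the resulting series $\sum_k \lambda_k^{-(\alpha^{-1}+\epsilon)}$ via Weyl's law and the convergence of $\sum_k k^{-(1+\alpha\epsilon)}$. The only difference is presentational: you make explicit the Cauchy argument in $L^2(\Omega;\dot{H}^{-s})$ and the identification of the limit with $\mathcal{W}$, steps the paper leaves implicit when it simply "takes the limit $N\to\infty$."
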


\begin{proof}
	Let $\epsilon>0$ and $N\in\mathbb{N}$. Consider the truncated white noise $\mathcal{W}_N$ defined by
	\begin{equation*}
	\mathcal{W}_N : \varphi \in H \mapsto \langle \mathcal{W}_N, \varphi\rangle=\sum_{j=1}^N  W_j(\varphi, e_j)_{0}.
	\end{equation*}
	By definition of $M_0$,
	\begin{equation*}
	\begin{aligned}
	\e\big[\Vert \mathcal{W}_N\Vert_{-(\alpha^{-1}+\epsilon)}^2\big]
	&=\e\bigg[ \sum_{k\in\bi 1, M_0\ei}\vert W_k\vert^2 +\sum_{k=M_0+1}^N \lambda_k^{-(\alpha^{-1}+\epsilon)}  \vert W_k \vert^2\bigg] =M_0 +\sum_{k=M_0+1}^N \lambda_k^{-(\alpha^{-1}+\epsilon)},
	\end{aligned}
	\end{equation*}
	which gives, using \Cref{prop:weyl},
	\begin{equation*}
	\begin{aligned}
	\e\big[\Vert \mathcal{W}_N\Vert_{-(\alpha^{-1}+\epsilon)}^2\big]
	\le M_0 +c_{\lambda}^{-(\alpha^{-1}+\epsilon)} \sum_{k=1}^N k^{-(1+\epsilon\alpha)} \le M_0 +c_{\lambda}^{-(\alpha^{-1}+\epsilon)}\zeta(1+\epsilon\alpha),
	\end{aligned}
	\end{equation*}
	where $\zeta$ denotes the Riemann zeta function satisfying $\zeta(1+\epsilon\alpha)<\infty$ since $\epsilon\alpha >0$.
	Taking the limit $N\rightarrow \infty$ implies that $\e[\Vert \mathcal{W}\Vert_{-(\alpha^{-1}+\epsilon)}^2]<\infty$, which proves the claim.\qed
\end{proof}


We now introduce a class of random fields defined using the white noise $\mathcal{W}$ and functions of the Laplacian. Let $\gamma$ be a \psd satisfying \Cref{assum:alphabeta} be satisfied and let $\mathcal{Z}$ be the random field defined by
\begin{equation}
\mathcal{Z}=\gamma(-\Delta_{\mathcal{M}})\mathcal{W} \peq
\label{eq:def_z}
\end{equation}
By Propositions~\ref{prop:dom_def_flap} and~\ref{prop:reg_w}, for any $\epsilon>0$, $\mathcal{Z}$ is (a.s.) an element of $\dot{H}^{2\beta-(\alpha^{-1}+\epsilon)}$. 
The next proposition links $\mathcal{Z}$ to $H$-valued random variables. 

\begin{prop}\label{prop:def_gegf_sum_eigf}
	Let $\gamma$ be a \psd satisfying \Cref{assum:alphabeta} and let $\mathcal{Z}$ be defined by~\eqref{eq:def_z}. Then, $\mathcal{Z}\in L^2(\Omega; H)$ and $\mathcal{Z}$ can be decomposed as
	\begin{equation*}
	\mathcal{Z}=\sum\limits_{k\in\mathbb{N}} W_k\gamma(\lambda_k) e_k \veq
	\end{equation*} 
	where the weights $\lbrace W_j \rbrace_{j\in\mathbb{N}}$ define a white noise as in~\eqref{eq:def_wn}.
\end{prop}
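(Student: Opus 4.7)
The plan is to study the partial sums
$$S_N = \sum_{k=1}^N W_k \gamma(\lambda_k)\, e_k, \qquad N\in\mathbb{N},$$
show they form a Cauchy sequence in $L^2(\Omega; H)$, and then identify the limit with $\mathcal{Z}=\gamma(-\Delta_{\mathcal{M}})\mathcal{W}$. For any $N > M \ge 0$, orthonormality of $\{e_k\}_{k\in\mathbb{N}}$ in $H$ together with the mutual independence and unit variance of $\{W_k\}_{k\in\mathbb{N}}$ gives
$$\mathbb{E}\big[\Vert S_N - S_M\Vert_0^2\big] = \sum_{k=M+1}^N \gamma(\lambda_k)^2.$$

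Next, I would bound this tail by splitting at index $K_\gamma$ defined in~\eqref{eq:K_gamma}. The finitely many terms with $k\le K_\gamma$ contribute nothing to the tail once $M\ge K_\gamma$, since $|\gamma(\lambda_k)|\le R_\gamma$ there. For $k>K_\gamma$, Definition~\ref{def:psd} yields $|\gamma(\lambda_k)|\le C_\gamma\,\lambda_k^{-\beta}$ and Weyl's law (Proposition~\ref{prop:weyl}) yields $\lambda_k\ge c_\lambda\, k^\alpha$, hence
$$\gamma(\lambda_k)^2 \le C_\gamma^2\, c_\lambda^{-2\beta}\, k^{-2\alpha\beta}.$$
Assumption~\ref{assum:alphabeta} guarantees $2\alpha\beta>1$, so $\sum_k k^{-2\alpha\beta}$ converges and the tail vanishes as $M\to\infty$. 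Consequently $(S_N)$ is Cauchy in the Hilbert space $L^2(\Omega; H)$ and converges to some $\tilde{\mathcal{Z}}\in L^2(\Omega; H)$, which a.s.\ admits the series representation $\tilde{\mathcal{Z}} = \sum_{k\in\mathbb{N}} W_k\gamma(\lambda_k)e_k$.

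It remains to identify $\tilde{\mathcal{Z}}$ with $\mathcal{Z} = \gamma(-\Delta_{\mathcal{M}})\mathcal{W}$. By Proposition~\ref{prop:reg_w}, $\mathcal{W}$ lies a.s.\ in $\dot{H}^{-(\alpha^{-1}+\epsilon)}$, and by~\eqref{eq:def_wn} its generalized Fourier coefficients relative to $\{e_k\}$ are exactly the weights $W_k$. The extended operator $\gamma(-\Delta_{\mathcal{M}})$ of Proposition~\ref{prop:dom_def_flap} acts coefficient-wise, multiplying the $k$-th coefficient by $\gamma(\lambda_k)$, so $\mathcal{Z}$ and $\tilde{\mathcal{Z}}$ share the same Fourier expansion and must agree as elements of any $\dot{H}^\sigma$ containing both of them (for instance, any $\sigma\le \min\{0, 2\beta-(\alpha^{-1}+\epsilon)\}$). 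This yields $\mathcal{Z}=\tilde{\mathcal{Z}}\in L^2(\Omega;H)$ with the claimed decomposition. I expect the main subtlety to lie in this last step: one has to verify carefully that the formula $\gamma(-\Delta_{\mathcal{M}})f = \sum_k \gamma(\lambda_k)(f,e_k)_0\, e_k$ persists when $f$ is the distribution $\mathcal{W}$, with $(f,e_k)_0$ reinterpreted as the dual pairing $\langle \mathcal{W}, e_k\rangle = W_k$; once this is established, the rest is a routine comparison of coefficients.
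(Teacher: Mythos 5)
Your proof is correct and rests on the same key estimate as the paper's: $\sum_{k}\gamma(\lambda_k)^2<\infty$ via the decay $|\gamma(\lambda_k)|\le C_\gamma\lambda_k^{-\beta}$, Weyl's law, and $2\alpha\beta>1$. The paper merely packages it slightly differently (it first invokes Propositions~\ref{prop:dom_def_flap} and~\ref{prop:reg_w} to place $\mathcal{Z}$ in $H$ almost surely and then computes $\e[\Vert\mathcal{Z}\Vert_0^2]$ directly, rather than running a Cauchy-sequence argument on partial sums), so this is essentially the same proof.
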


\begin{proof}
	Since \Cref{assum:alphabeta} is satisfied, Propositions \ref{prop:dom_def_flap} and \ref{prop:reg_w} give that $\mathcal{Z}$ is in $H$ (almost surely). Recall that, by definition of functions of the Laplacian,
	\begin{equation*}
	\mathcal{Z}=\gamma(-\Delta_{\mathcal{M}})\mathcal{W}=\sum_{k\in\N}\gamma(\lambda_k)(\mathcal{W},  e_k)_0\, e_k=\sum_{k\in\N}\gamma(\lambda_k)W_k \,e_k.
	\end{equation*}
	By linearity, we then have
	\begin{equation*}
	\e[\mathcal{Z}]=\gamma(-\Delta_{\mathcal{M}})\e[\mathcal{W}]=0,
	\end{equation*}
	and following \Cref{def:psd} and \Cref{prop:weyl},
	\begin{equation*}
	\begin{aligned}
	\e[\Vert\mathcal{Z}\Vert_0^2]
	=\sum_{k\in\N}\vert\gamma(\lambda_k)\vert^2 
	&\le R_\gamma^2\bigg(M_0+ \sum_{k>M_0} \lambda_k^{-2\beta}\bigg)
	\le R_\gamma^2\big(M_0 + c_\lambda^{-2\beta}\zeta(2\beta\alpha)\big),
	\end{aligned}
	\end{equation*}
	where $\zeta(2\beta\alpha) <\infty$ since $2\alpha\beta >1$, and $R_\gamma$ is defined in~\eqref{eq:K_gamma}.
	Hence $\e[\Vert\mathcal{Z}\Vert_0^2]<\infty$ and therefore $\mathcal{Z}\in L^2(\Omega; H)$.\qed
\end{proof}

The class of Gaussian random fields described in this section can be seen as an extension to arbitrary compact connected oriented Riemannian manifolds of the class of isotropic random fields on the sphere described in \cite{lang2015isotropic}. In this last case, the eigenfunctions $\lbrace e_k\rbrace_{k\in\N}$  of the Laplace--Beltrami operator are the spherical harmonics, and the \psd $\gamma$ defines the angular power spectrum of the field. In this sense, the decomposition introduced in \Cref{prop:def_gegf_sum_eigf} can be seen as the \kl expansion of a Gaussian random field on a compact connected oriented Riemannian manifold.

In the particular case where the \psd $\gamma$ takes the form
\begin{equation}
\gamma(\lambda)=\vert \kappa^2 + \lambda\vert^{-\beta}, \quad \lambda \ge 0,
\label{eq:gamma_beta}
\end{equation}
for some parameters $\kappa>0$ and $\beta>1/(2\alpha)=d/4$, the resulting field $\mathcal{Z}$ is a solution to the fractional elliptic SPDE 
\begin{equation}
(\kappa^2-\Delta_{\mathcal{M}})^{\beta}\mathcal{Z}=\mathcal{W}.
\label{eq:spde}
\end{equation}
As such, $\mathcal{Z}$ is an instance of a Whittle--Mat\'ern random field on a manifold, as introduced in \cite{lindgren2011explicit} for compact Riemannian manifolds. This class of random fields was studied in \cite{jansson2021surface} for the particular case where the manifold is a sphere, and in \cite{herrmann2020multilevel,harbrecht2021multilevel} for compact Riemannian manifolds.


More generally, the random fields defined by~\eqref{eq:def_z}  are particular instances of regular zero-mean generalized Gaussian fields (GeGF) as defined in \citep[Section 3.2.1]{lototsky2017stochastic}. 
To a field $\mathcal{Z}$ defined by~\eqref{eq:def_z}, we can associate the continuous linear functional $f\in H \mapsto (\mathcal{Z}, f)_0$, which corresponds to a GeGF  with a covariance operator $K : H \rightarrow H$ given by $K=\gamma^2(-\Delta_{\mathcal{M}})$ (where by definition the covariance operator is defined as $\mathbb{E}[(\mathcal{Z}, f)_0(\mathcal{Z}, f')_0]=(K(f),f')_0$). The fact that this GeGF is regular stems directly from the fact that, under the assumptions used in \Cref{prop:def_gegf_sum_eigf}, the operator $\gamma^2(-\Delta_{\mathcal{M}})$ is nuclear. Conversely, since $-\Delta_{\mathcal{M}}$ and  $\gamma^2(-\Delta_{\mathcal{M}})$ have the same eigenfunctions, and since the function $\gamma^2$ maps the eigenvalues of  $-\Delta_{\mathcal{M}}$ to those of  $\gamma^2(-\Delta_{\mathcal{M}})$, any regular GeGF with covariance operator $\gamma^2(-\Delta_{\mathcal{M}})$ can be decomposed as in \Cref{prop:def_gegf_sum_eigf} (cf.  \citep[Theorem 3.2.15]{lototsky2017stochastic} and its proof).

\section{Discretization of Gaussian random fields}
\label{sec:discr_genrf}

We now aim at computing numerical approximations of the random fields $\mathcal{Z}$ defined in~\eqref{eq:def_z} using a discretization of the Laplace--Beltrami operator. The discretization we propose is based on a Galerkin approximation, and can be seen as an extension of the approach in \cite{BKK20}. It leads to an approximation by a weighted sum of  basis functions defined on the manifold.

For $n\ge 1$, let $\lbrace \psi_k \rbrace_{1\le k \le n}$ be a family of linearly independent functions of~$\dot{H}^1$ and denote by $V_n \subset \dot{H}^1$ its linear span.
In particular, $V_n$ is a $n$-dimensional subspace of $\dot{H}^1$, and we assume that the constant functions are in $V_n$.
Examples that are included in our framework are spectral methods, where $V_n$ is spanned by finitely many eigenfunctions of~$-\Delta_{\mathcal{M}}$, boundary element methods \cite{SSch11}, and with an extra approximation step surface finite elements \cite{dziuk2013finite}.

\subsection{Galerkin discretization of the Laplace--Beltrami operator}
\label{sec:rg_discr}

We first introduce a discretization $-\Delta_n$ of the Laplace--Beltrami operator over $V_n$ by a \emph{Galerkin approximation} \cite[Chapter 4]{axelsson2001finite}.
For any $\varphi\in V_n$, we set $-\Delta_n\varphi$ to be the element of $V_n$ satisfying for all $v\in V_n$
\begin{equation*}
(-\Delta_n \varphi, v)_0
= \left(\nabla_{\mathcal{M}}\varphi, \nabla_{\mathcal{M}}v\right)_0,
\end{equation*}
which uniquely defines $-\Delta_n: V_n \rightarrow V_n$. In particular, if $\lbrace f_{k} \rbrace_{1\le k \le  n}$ denotes any orthonormal basis of $(V_n, \Vert \cdot\Vert_0)$, this operator satisfies
\begin{equation}
-\Delta_n\varphi=\sum\limits_{k=1}^n \left( \nabla_{\mathcal{M}}f_k, \nabla_{\mathcal{M}} \varphi\right)_0f_{k}, \quad \varphi \in V_n.
\label{eq:proj_lap}
\end{equation}

Let $\bm C$ and $\bm R$ be the matrices called (in the context of finite element methods) \emph{mass matrix} and \emph{stiffness matrix} respectively, and defined by
\begin{equation}
\begin{aligned}
\bm C &= \left[( \psi_k, \psi_l)_0\right]_{1\le k,l\le n}, \quad
\bm R & =\left[(\nabla_{\mathcal{M}}\psi_k, \nabla_{\mathcal{M}}\psi_l)_0\right]_{1\le k,l\le n} \peq
\end{aligned}
\label{eq:def_CG}
\end{equation}
As defined, $\bm C$ is a symmetric positive definite  matrix and $\bm R$ is a symmetric positive semi-definite matrix (cf. \Cref{prop:prop_CG} of the \sm). Consequently, the \emph{generalized eigenvalue problem} (GEP) defined by the matrix pencil $(\bm R, \bm C)$, which consists in finding all so-called eigenvalues $\lambda \in\R$ and eigenvectors $\bm w\in\R^n \backslash \lbrace\bm 0\rbrace$ such that
\begin{equation*}
\bm R \bm w = \lambda \bm C \bm w \veq
\end{equation*}
admits a solution consisting of $n$ nonnegative eigenvalues and $n$ eigenvectors mutually orthogonal with respect to the inner product $(\cdot, \cdot)_{\bm C}$ (and  norm $\Vert\cdot\Vert_{\bm C}$) defined by (see \cite[Theorem 15.3.3]{parlett1998symmetric}).
\begin{equation*}
(\bm x, \bm y)_{\bm C}
= \bm y^T \bm C \bm x, \quad \bm x, \bm y\in\R^n \peq
\end{equation*}
We observe further that since $\bm C$ is symmetric and positive definite, $\sqrt{\bm C}\in\R^{n\times n}$ satisfying $\sqrt{\bm C}(\sqrt{\bm C})^T=\bm C$ exists and is invertible. Therefore denoting by $\|\cdot\|_2$ the Euclidean norm, we obtain $\|\cdot\|_{\bm C} = \|(\sqrt{\bm C})^T\cdot\|_2$ and an isometry between $(\R^n,\|\cdot\|_{\bm C})$ and $(\R^n,\|\cdot\|_2)$ via the linear bijection $F : \R^n \rightarrow \R^n$ defined by $F(\bm x) = (\sqrt{\bm C})^{T}\bm x$.

The next result links the GEP to the operator $-\Delta_n$, and is proven in Appendix~\ref{appen:diag_lapl_discrg}.

\begin{theorem}
	The operator $-\Delta_n$ is diagonalizable and its eigenvalues are those of the GEP defined by the matrix pencil $(\bm R, \bm C)$.
	In particular, $E_0 : \mathbb{R}^n \rightarrow V_n$, defined by
	\begin{equation*}
	E_0(\bm u) = \sum\limits_{k=1}^n u_k\psi_k, 
	\quad \bm u\in\mathbb{R}^n \veq
	\end{equation*}
	is an isomorphism that maps the eigenvectors of $(\bm R, \bm C)$ to  eigenfunctions of $-\Delta_n$, and an isometry between $(\R^n,\|\cdot\|_C)$ and $(V_n, \Vert \cdot\Vert_0)$.
	\label{prop:diag_lapl_discrg}
\end{theorem}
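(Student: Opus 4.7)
The plan is to translate the operator equation $-\Delta_n\varphi = \lambda\varphi$ on $V_n$ into the matrix GEP on $\R^n$ via the coordinate map $E_0$, and then extract diagonalizability and the isometry claim from the $\bm C$-orthogonality of the GEP eigenvectors.

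First I would verify that $E_0$ is a linear bijection: linearity is immediate, and bijectivity follows from the linear independence of $\{\psi_k\}_{1\le k\le n}$, which by definition form a basis of $V_n$. Next I would check the isometry between $(\R^n,\|\cdot\|_{\bm C})$ and $(V_n,\|\cdot\|_0)$ by direct computation,
\begin{equation*}
\|E_0(\bm u)\|_0^2 = \Big(\sum_{k=1}^n u_k\psi_k,\sum_{l=1}^n u_l\psi_l\Big)_0 = \sum_{k,l=1}^n u_k u_l(\psi_k,\psi_l)_0 = \bm u^T\bm C\bm u = \|\bm u\|_{\bm C}^2,
\end{equation*}
using the definition \eqref{eq:def_CG} of $\bm C$.

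The core step is to show that $(\lambda,\bm w)$ solves the GEP $\bm R\bm w = \lambda\bm C\bm w$ if and only if $\varphi := E_0(\bm w)$ satisfies $-\Delta_n\varphi = \lambda\varphi$. Writing $-\Delta_n\varphi = \sum_{k=1}^n a_k\psi_k$ for some unknown $\bm a\in\R^n$, the defining variational identity $(-\Delta_n\varphi, v)_0 = (\nabla_{\mathcal{M}}\varphi,\nabla_{\mathcal{M}}v)_0$ tested against $v=\psi_l$ for $l\in\bi 1,n\ei$ yields, via expansion of $\varphi$ and $-\Delta_n\varphi$ in the $\psi_k$'s and the definitions of $\bm C$ and $\bm R$, the linear system $\bm C\bm a = \bm R\bm w$. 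Since $\bm C$ is invertible, this uniquely determines $\bm a$ (which also confirms that $-\Delta_n$ is well-defined as a map $V_n\to V_n$, since the $\psi_k$'s are a basis, even though the statement \eqref{eq:proj_lap} was given in an orthonormal basis). The equation $-\Delta_n\varphi = \lambda\varphi$ then amounts to $\bm a = \lambda\bm w$, i.e.\ $\bm R\bm w = \lambda\bm C\bm w$, which is exactly the GEP.

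Finally, I would invoke the spectral result \cite[Theorem 15.3.3]{parlett1998symmetric} cited in the text: since $\bm R$ is symmetric positive semi-definite and $\bm C$ is symmetric positive definite, the GEP admits $n$ real nonnegative eigenvalues and a family of $n$ eigenvectors that are $\bm C$-orthogonal, hence $\bm C$-linearly independent. By the isometry $E_0$ this yields $n$ $L^2$-orthogonal eigenfunctions of $-\Delta_n$ in $V_n$, which therefore form a basis of $V_n$; thus $-\Delta_n$ is diagonalizable and its spectrum coincides with that of the GEP, completing the proof. No genuinely hard step arises; the only point requiring some care is keeping the two bases $\{\psi_k\}$ and $\{f_k\}$ separate and deriving the matrix form directly from the variational definition of $-\Delta_n$ rather than from the expansion \eqref{eq:proj_lap}.
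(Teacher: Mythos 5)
Your proposal is correct and follows essentially the same route as the paper: translate the eigenproblem for $-\Delta_n$ into the GEP for $(\bm R,\bm C)$ via the coordinate map $E_0$, verify the isometry $\|E_0(\bm u)\|_0^2=\bm u^T\bm C\bm u$, and conclude diagonalizability from the $\bm C$-orthogonal eigenbasis of the pencil. The only (harmless) difference is that you test the variational definition of $-\Delta_n$ directly against the $\psi_l$ to get $\bm C\bm a=\bm R\bm w$, whereas the paper routes the same computation through the orthonormal basis $\{f_k\}$ of \eqref{eq:proj_lap} and a change-of-basis matrix; your version is slightly more direct and also makes the two-way equivalence explicit.
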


We continue with a corollary that will be useful later on.

\begin{corol}
	The eigenvalues of $-\Delta_n$ are those of the matrix 
	\begin{equation*}
	\bm S = \big(\sqrt{\bm C}\big)^{-1}\bm R\big(\sqrt{\bm C}\big)^{-T} \veq
	\label{eq:def_S}
	\end{equation*}
	and the mapping $E: \mathbb{R}^n \rightarrow V_n$, defined by
	\begin{equation*}
	E(\bm v) = \sum\limits_{k=1}^n \left[ \big(\sqrt{\bm C}\big)^{-T}\bm v\right]_k\psi_k, 
	\quad \bm v\in\mathbb{R}^n \veq
	\label{eq:def_isom_eig}
	\end{equation*}
	is an isomorphism that maps the eigenvectors of $\bm S$ to the eigenfunctions of $-\Delta_n$ and an isometry between $(\R^n,\Vert\cdot\Vert_{2})$ and $(V_n,\Vert \cdot\Vert_0)$.
	\label{prop:diag_lapl_discr}
	
\end{corol}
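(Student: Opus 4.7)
The plan is to derive everything from Theorem~\ref{prop:diag_lapl_discrg} by a change of variables induced by $\sqrt{\bm C}$, exploiting the isometry $F:\bm x\mapsto (\sqrt{\bm C})^T\bm x$ between $(\R^n,\|\cdot\|_{\bm C})$ and $(\R^n,\|\cdot\|_2)$ which is already identified in the paragraph preceding Theorem~\ref{prop:diag_lapl_discrg}. The key observation is that the generalized eigenvalue problem $\bm R\bm w=\lambda\bm C\bm w$ and the standard eigenvalue problem $\bm S\bm v=\lambda\bm v$ are conjugate under $F$.

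First I would show the spectral equivalence. Starting from $\bm R\bm w=\lambda\bm C\bm w$, multiply on the left by $(\sqrt{\bm C})^{-1}$ and use $\bm C=\sqrt{\bm C}(\sqrt{\bm C})^T$ to obtain $(\sqrt{\bm C})^{-1}\bm R\bm w=\lambda(\sqrt{\bm C})^T\bm w$. Setting $\bm v=(\sqrt{\bm C})^T\bm w$, so that $\bm w=(\sqrt{\bm C})^{-T}\bm v$, this rewrites as $\bm S\bm v=\lambda\bm v$. Since $(\sqrt{\bm C})^T$ is invertible, this correspondence $\bm w\leftrightarrow\bm v$ is a bijection between eigenvectors of the pencil $(\bm R,\bm C)$ and of $\bm S$, with identical eigenvalues. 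By Theorem~\ref{prop:diag_lapl_discrg}, these are also the eigenvalues of $-\Delta_n$.

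Next I would observe the factorization $E=E_0\circ F^{-1}$. Indeed, for every $\bm v\in\R^n$,
\begin{equation*}
E(\bm v)=\sum_{k=1}^n\bigl[(\sqrt{\bm C})^{-T}\bm v\bigr]_k\psi_k=E_0\bigl((\sqrt{\bm C})^{-T}\bm v\bigr)=E_0\bigl(F^{-1}(\bm v)\bigr).
\end{equation*}
Since $F^{-1}:(\R^n,\|\cdot\|_2)\to(\R^n,\|\cdot\|_{\bm C})$ is a linear isometric bijection and $E_0:(\R^n,\|\cdot\|_{\bm C})\to(V_n,\|\cdot\|_0)$ is an isometric isomorphism by Theorem~\ref{prop:diag_lapl_discrg}, the composition $E$ is an isometric isomorphism between $(\R^n,\|\cdot\|_2)$ and $(V_n,\|\cdot\|_0)$.

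Finally, the mapping of eigenvectors is obtained by chasing the diagram: if $\bm v$ is an eigenvector of $\bm S$ with eigenvalue $\lambda$, then $\bm w=F^{-1}(\bm v)=(\sqrt{\bm C})^{-T}\bm v$ is an eigenvector of the pencil $(\bm R,\bm C)$ with eigenvalue $\lambda$ by the first step, so by Theorem~\ref{prop:diag_lapl_discrg} $E_0(\bm w)=E(\bm v)$ is an eigenfunction of $-\Delta_n$ with eigenvalue $\lambda$. There is no real obstacle here: the only point requiring care is checking that the change of variables $\bm v=(\sqrt{\bm C})^T\bm w$ is genuinely a bijection, which is guaranteed by the invertibility of $\sqrt{\bm C}$ (itself a consequence of the positive definiteness of $\bm C$).
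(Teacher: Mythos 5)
Your proof is correct and follows essentially the same route as the paper: conjugating the generalized eigenvalue problem for the pencil $(\bm R,\bm C)$ into the standard one for $\bm S$ via the isometry $F(\bm x)=(\sqrt{\bm C})^T\bm x$, and then observing $E=E_0\circ F^{-1}$ so that Theorem~\ref{prop:diag_lapl_discrg} transfers all the claimed properties. (Your change of variables $\bm v=(\sqrt{\bm C})^T\bm w=F(\bm w)$ is in fact the correct direction; the paper's proof writes $\bm v=F^{-1}(\bm w)$ but its computation and conclusion use $\bm v=F(\bm w)$, so you have quietly fixed a small typo.)
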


\begin{proof}
	Note first that $\bm S$ is well-defined and symmetric positive semi-definite by the properties of~$\bm C$ and recall the bijection~$F$ given by $F(\bm x) = (\sqrt{\bm C})^{T}\bm x$.
	%
	Let $(\lambda, \bm w)$ be an eigenpair of $(\bm R, \bm C)$ and set $\bm v=F^{-1}(\bm w)$, then \begin{equation*}
	\bm S \bm v = \big(\sqrt{\bm C}\big)^{-1} \bm R \bm w
	= \lambda  \big(\sqrt{\bm C}\big)^{-1}\bm C  \bm w
	= \lambda  \big(\sqrt{\bm C}\big)^{T} \bm w=\lambda \bm v,
	\end{equation*}
	and therefore $(\lambda, \bm v)$ is an eigenpair of $\bm S$. Hence $F$ maps the eigenvectors of $(\bm R, \bm C)$ to those of $\bm S$, and conversely $F^{-1}$ maps the eigenvectors of $\bm S$ to those of $(\bm R, \bm C)$. Noting that $E=E_0\circ F^{-1}$ and applying \Cref{prop:diag_lapl_discrg} concludes the proof.\qed
\end{proof}

We denote by $\lbrace \lambda_{k}^{(n)}\rbrace_{1\le k\le n} \subset \mathbb{R}_+$ the eigenvalues of the matrix $\bm S$ (cf. \Cref{prop:diag_lapl_discr}), ordered in non-decreasing order. Let $\lbrace \bm v_{k}\rbrace_{1\le k\le n}\subset \R^n$ be a set of eigenvectors associated with these eigenvalues, and chosen to form an orthonormal basis of $\R^n$. Hence, if $\bm V$ is the matrix whose columns are $( \bm v_1, \dots, \bm v_n)$, we have $\bm V^T\bm V=\bm V\bm V^T=\bm I$ and
\begin{equation*}
\bm S =\bm V 
\Diag(\lambda_{1}^{(n)}, \dots,\lambda_{n}^{(n)})
\bm V^T \veq
\label{eq:s_diag}
\end{equation*}
where $\Diag(\lambda_{1}^{(n)}, \dots,\lambda_{n}^{(n)})$ denotes the diagonal matrix whose entries are $\lambda_{1}^{(n)}, \dots,\lambda_{n}^{(n)}$.
Given that $E$ defined in \Cref{prop:diag_lapl_discr} is a linear isometry, it maps orthonormal sequences in $(\R^n,\Vert\cdot\Vert_{2})$ to orthonormal sequences in $(V_n,\Vert \cdot\Vert_0)$. Hence, the set  $\lbrace e_{k}^{(n)}\rbrace_{1\le k\le n}\subset V_n$, where
\begin{equation*}
e_{k}^{(n)}=E(\bm v_k), \quad  k\in\bi 1, n\ei\veq
\end{equation*}
is an orthonormal family of functions of $V_n$. 
Moreover, given that $E$ is linear and bijective, $\lbrace E(\bm v_k)\rbrace_{1\le k\le n}$ is a basis of $V_n$. Consequently, $\lbrace e_{k}^{(n)}\rbrace_{1\le k\le n}$ defines an orthonormal basis of $V_n$ composed of eigenfunctions of $-\Delta_n$.

Consider a \psd $\gamma$ satisfying \Cref{assum:alphabeta}. Following the definition of the discretized operator $-\Delta_n$ and analogously to the definition of the operator $\gamma(-\Delta_{\mathcal{M}})$, the discretization of the operator $\gamma(-\Delta_\mathcal{M})$ on $V_n$ is defined as the endomorphism $\gamma(-\Delta_n)$ of $V_n$ given by
\begin{equation}
\begin{aligned}
\gamma(-\Delta_n)\varphi = \sum\limits_{k=1}^n\gamma(\lambda_{k}^{(n)})( \varphi, e_{k}^{(n)})_0 \; e_{k}^{(n)}, \quad \varphi \in V_n \peq
\end{aligned}
\label{eq:discr_pseudo_diff}
\end{equation}
Note that this definition does not depend on the choice of orthonormal basis (cf. \Cref{prop:onb} of the \sm).

\subsection{Galerkin discretization of Gaussian random fields}

Let $\mathcal{W}_n$ be the $V_n$-valued random variable defined by
\begin{equation}
\mathcal{W}_n=\sum\limits_{k=1}^n W_k e_{k}^{(n)} \veq
\label{eq:wn_discr_ortho}
\end{equation}
where $W_1, \dots, W_n$ are independent standard Gaussian random variables. Then, $\mathcal{W}_n$ is called \textit{white noise on $V_n$}
and satisfies, for any $\varphi, \varphi_1, \varphi_2 \in V_n$, $\e[( \mathcal{W}_n, \varphi)_0]=0$ and $$\cov[( \mathcal{W}_n, \varphi_1)_0,( \mathcal{W}_n, \varphi_2)_0]=( \varphi_1, \varphi_2)_0.$$
It can be expressed in the basis functions $\lbrace \psi_k\rbrace_{1\le k \le  n}$ of~$V_n$, as stated in the next proposition which leads to an expression of the white noise using a basis that does not have to be orthonormal or an eigenbasis of~$-\Delta_{n}$.
\begin{prop}
	Let $\mathcal{W}_n$ be a white noise on $V_n$. Then, $\mathcal{W}_n$ can be written as
	\begin{equation*}
	\mathcal{W}_n=\sum_{k=1}^n \tilde{W}_k\psi_k \veq
	\end{equation*}
	where $\tilde{\bm W}=(\tilde{W}_1, \dots, \tilde{W}_n)^T$ is a centered Gaussian vector with covariance matrix $\bm C^{-1}$.
	\label{prop:wnc}
\end{prop}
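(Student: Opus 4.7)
The plan is to substitute the explicit basis representation of the eigenfunctions $e_k^{(n)}$ from \Cref{prop:diag_lapl_discr} into the defining series for $\mathcal{W}_n$, interchange the order of summation, identify the coefficients $\tilde{W}_k$ as a linear transformation of the standard Gaussian vector $\bm W = (W_1,\dots,W_n)^T$, and then compute the covariance using the orthogonality of the eigenvectors together with the defining identity $\sqrt{\bm C}(\sqrt{\bm C})^T = \bm C$.

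Concretely, first I would recall that $e_k^{(n)} = E(\bm v_k) = \sum_{j=1}^n [(\sqrt{\bm C})^{-T} \bm v_k]_j \, \psi_j$ from the definition of the isometry $E$ in \Cref{prop:diag_lapl_discr}. Substituting this into $\mathcal{W}_n = \sum_{k=1}^n W_k e_k^{(n)}$ and swapping the two finite sums yields
\begin{equation*}
	\mathcal{W}_n = \sum_{j=1}^n \bigg(\sum_{k=1}^n W_k \big[(\sqrt{\bm C})^{-T}\bm v_k\big]_j\bigg) \psi_j,
\end{equation*}
so that $\tilde{W}_j := \sum_{k=1}^n W_k [(\sqrt{\bm C})^{-T}\bm v_k]_j$. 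Collecting the eigenvectors into the orthogonal matrix $\bm V = (\bm v_1, \dots, \bm v_n)$, this rewrites in vector form as $\tilde{\bm W} = (\sqrt{\bm C})^{-T} \bm V \bm W$.

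Since $\tilde{\bm W}$ is a linear transformation of the standard Gaussian vector $\bm W$, it is a centered Gaussian vector, and the only remaining computation is its covariance. Using $\mathrm{Cov}[\bm W] = \bm I$, the orthogonality relation $\bm V \bm V^T = \bm I$, and the fact that $((\sqrt{\bm C})^T)^{-1}(\sqrt{\bm C})^{-1} = (\sqrt{\bm C}(\sqrt{\bm C})^T)^{-1} = \bm C^{-1}$, one obtains
\begin{equation*}
	\mathrm{Cov}[\tilde{\bm W}] = (\sqrt{\bm C})^{-T} \bm V \bm V^T (\sqrt{\bm C})^{-1} = (\sqrt{\bm C})^{-T}(\sqrt{\bm C})^{-1} = \bm C^{-1},
\end{equation*}
which is the claim. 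No step is a real obstacle here: the result is essentially a bookkeeping consequence of the change-of-basis isometry $E$ combined with the fact that the matrix $\bm V$ of eigenvectors of $\bm S$ is orthogonal. The one place to be careful is distinguishing $(\sqrt{\bm C})^{-T}$ from $(\sqrt{\bm C})^{-1}$ so that the factorization $\bm C = \sqrt{\bm C}(\sqrt{\bm C})^T$ is applied in the correct order.
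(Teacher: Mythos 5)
Your proposal is correct and follows essentially the same route as the paper: both derive the identity $\tilde{\bm W}=(\sqrt{\bm C})^{-T}\bm V\bm W$ from the expression of $e_k^{(n)}=E(\bm v_k)$ in the basis $\lbrace\psi_k\rbrace$, the only difference being that you expand the sums componentwise where the paper invokes the bijectivity of $E$, and you spell out the final covariance computation that the paper leaves implicit.
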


\begin{proof}
	Let $\bm W=(W_1, \cdots, W_n)^T$ be the vector containing the random weights defining $\mathcal{W}_n$ in~\eqref{eq:wn_discr_ortho}. Using the linearity of $E$ in \Cref{prop:diag_lapl_discr}, $\mathcal{W}_n\in V_n$  can be written as
	\begin{equation*}
	\mathcal{W}_n=\sum_{k=1}^n  W_k E(\bm v_k)=E\bigg(\sum_{k=1}^n   W_k\bm v_k\bigg)=E( \bm V\bm W ) \veq
	\end{equation*}
	where $\bm W \sim \mathcal{N}(\bm 0,\bm I)$. But also, denoting by $\tilde{\bm W}=(\tilde{W}_1, \dots, \tilde{W}_n)^T$ the vector containing the coordinates of $\mathcal{W}_n$ in the basis $\lbrace \psi_k\rbrace_{1\le k\le n}$ of $V_n$, we get from \Cref{eq:def_isom_eig}, 
	\begin{equation*}
	\mathcal{W}_n=\sum_{k=1}^n  \tilde{W}_k \psi_i=E\big((\sqrt{\bm C})^{T}\tilde{\bm W} \big).
	\end{equation*}
	Hence, since $E$ is bijective, we get $\tilde{\bm W} =(\sqrt{\bm C})^{-T}\bm V{\bm W} $ which proves the result.\qed
\end{proof}

Inspired by the definition of the $H$-valued random field $\mathcal{Z}$ in~\eqref{eq:def_z}, we introduce its Galerkin discretization  $\mathcal{Z}_n$ as the $V_n$-valued random field  defined by
\begin{equation}
\mathcal{Z}_n=\gamma(-\Delta_n)\mathcal{W}_n=\sum_{k=1}^n \gamma(\lambda_{k}^{(n)}) W_k e_{k}^{(n)} \veq
\label{eq:def_Zn}
\end{equation}
where $W_1, \dots, W_n$ are independent standard Gaussian random variables.
Expressing $\mathcal{Z}_n$ in the basis functions $\lbrace \psi_k\rbrace_{1\le k \le  n}$ can then be done straightforwardly using the next theorem, leading to a first method to generate approximations of~$\mathcal{Z}$.

\begin{theorem}
	The discretized field $\mathcal{Z}_n$ can be decomposed in the basis $\lbrace\psi_k\rbrace_{1\le k\le n}$ as
	\begin{equation}
	\mathcal{Z}_n=\sum_{k=1}^n Z_k \psi_k \veq
	\label{eq:fem_discr}
	\end{equation}
	where $\bm Z=(Z_1, \dots, Z_n)^T$ is a centered Gaussian vector with covariance matrix given by
	\begin{equation}
	\var[\bm Z]=\big(\sqrt{\bm C}\big)^{-T}\,\gamma^2(\bm S)\,\big(\sqrt{\bm C}\big)^{-1}
	\label{eq:cov_weights}
	\end{equation}
	with 
	\begin{equation*}
	\gamma^2(\bm S)
	=
	\bm V
	\Diag\left(\gamma\big(\lambda_{1}^{(n)}\big)^2 , \dots, \gamma\big(\lambda_{n}^{(n)}\big)^2\right)
	\bm V^T \peq
	\end{equation*}
	\label{th:cov_weights}
\end{theorem}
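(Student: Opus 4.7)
The plan is to push everything through the isometry $E : \mathbb{R}^n \to V_n$ from \Cref{prop:diag_lapl_discr} and then read off the covariance from the linear change of coordinates. First, I would observe that since $e_k^{(n)} = E(\bm v_k)$ and $E$ is linear, the definition of $\mathcal{Z}_n$ in \eqref{eq:def_Zn} rewrites as
\begin{equation*}
\mathcal{Z}_n = \sum_{k=1}^n \gamma(\lambda_k^{(n)}) W_k\, E(\bm v_k) = E\bigg( \sum_{k=1}^n \gamma(\lambda_k^{(n)}) W_k \bm v_k\bigg) = E(\bm X),
\end{equation*}
where $\bm X = \bm V\,\Diag(\gamma(\lambda_1^{(n)}),\dots,\gamma(\lambda_n^{(n)}))\,\bm W$ with $\bm W = (W_1,\dots,W_n)^T \sim \mathcal{N}(\bm 0, \bm I)$. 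Since $\bm X$ is a linear transformation of a standard Gaussian vector, it is a centered Gaussian vector whose covariance is the matrix
\begin{equation*}
\var[\bm X] = \bm V\,\Diag\big(\gamma(\lambda_1^{(n)})^2,\dots,\gamma(\lambda_n^{(n)})^2\big)\,\bm V^T = \gamma^2(\bm S),
\end{equation*}
by the spectral decomposition of $\bm S$ and the definition of $\gamma^2(\bm S)$.

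Next, I would translate $E(\bm X)$ back into the basis $\{\psi_k\}_{1\le k \le n}$ using the definition of $E$ in \Cref{prop:diag_lapl_discr}, yielding
\begin{equation*}
\mathcal{Z}_n = E(\bm X) = \sum_{k=1}^n \big[(\sqrt{\bm C})^{-T}\bm X\big]_k \psi_k.
\end{equation*}
Because $\{\psi_k\}_{1\le k\le n}$ is a basis of $V_n$, coordinates in this basis are unique, so comparing with the desired decomposition \eqref{eq:fem_discr} identifies $\bm Z = (\sqrt{\bm C})^{-T}\bm X$. In particular $\bm Z$ is Gaussian with mean zero and
\begin{equation*}
\var[\bm Z] = (\sqrt{\bm C})^{-T}\,\var[\bm X]\,\big((\sqrt{\bm C})^{-T}\big)^T = (\sqrt{\bm C})^{-T}\,\gamma^2(\bm S)\,(\sqrt{\bm C})^{-1},
\end{equation*}
which is \eqref{eq:cov_weights}.

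There is no real conceptual obstacle here: once the isometry $E$ is recognized as the right device (the work has already been done in \Cref{prop:diag_lapl_discr} and in the eigenvalue analysis of $\bm S$), the statement reduces to a routine change-of-basis computation together with the standard fact that $\bm V\,\Diag(\gamma(\lambda_k^{(n)}))\,\bm W$ is a centered Gaussian with covariance $\gamma^2(\bm S)$. The only point that requires a little care is being consistent about transposes: one must remember that $\sqrt{\bm C}$ is not assumed symmetric (only $\sqrt{\bm C}(\sqrt{\bm C})^T = \bm C$), so the factor $(\sqrt{\bm C})^{-T}$ appears on the left and $(\sqrt{\bm C})^{-1}$ on the right in \eqref{eq:cov_weights}, matching the image of $E$ and its Gaussian pushforward.
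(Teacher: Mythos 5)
Your proposal is correct and follows essentially the same route as the paper's proof: both use the linearity and bijectivity of the isometry $E$ from \Cref{prop:diag_lapl_discr} to write $\mathcal{Z}_n = E\big(\bm V\,\Diag(\gamma(\lambda_1^{(n)}),\dots,\gamma(\lambda_n^{(n)}))\,\bm W\big)$ and then identify $\bm Z = (\sqrt{\bm C})^{-T}\bm V\,\Diag(\gamma(\lambda_k^{(n)}))\,\bm W$ by uniqueness of coordinates in the basis $\{\psi_k\}$. The only cosmetic difference is that you compute $\var[\bm X]=\gamma^2(\bm S)$ before changing coordinates, whereas the paper identifies $\bm Z$ first and leaves the covariance computation implicit.
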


\begin{proof}
	Notice that $\mathcal{Z}_n$ is $V_n$-valued, hence there exists some random vector $\bm Z\in \R^n$ such that $\mathcal{Z}_n=\sum_{k=1}^n Z_k \psi_k$. Following \Cref{prop:diag_lapl_discr}, we obtain $\mathcal{Z}_n=E((\sqrt{\bm C})^{T}\bm Z)$.
	But following instead the definition of $\mathcal{W}_n$ in~\eqref{eq:wn_discr_ortho} and the linearity of $E$, we get
	\begin{align*}
	\mathcal{Z}_n
	=\sum_{k=1}^n \gamma(\lambda_{k}^{(n)}) W_k E(\bm v_k)
	=E\bigg(\sum_{k=1}^n \gamma(\lambda_{k}^{(n)}) W_k \bm v_k\bigg)
	& =E(\bm V
	\Diag\left(\gamma\big(\lambda_{1}^{(n)}\big) , \dots, \gamma\big(\lambda_{n}^{(n)}\big)\right)
	\bm W) \veq
	\end{align*}
	where $\bm W=(W_1, \dots, W_n)^T \sim \mathcal{N}(\bm 0, \bm I)$. 
	Therefore, given that $E$ is bijective, $$\bm Z=\big(\sqrt{\bm C}\big)^{-T}\bm V\Diag\big(\gamma\big(\lambda_{1}^{(n)}\big) , \dots, \gamma\big(\lambda_{n}^{(n)}\big)\big)\bm W \veq$$
	which proves the result.\qed
\end{proof}

\Cref{th:cov_weights} provides an explicit expression for the covariance matrix of the weights of $V_n$-valued random variables. Consequently, generating realizations of such random functions can  be done by simulating a centered Gaussian random vector of weights with covariance matrix~\eqref{eq:cov_weights} and then building the weighted sum~\eqref{eq:fem_discr}.

A particular case, investigated in~\cite{borovitskiy2020mat},  is when $V_n$ is spanned by the set of eigenfunctions associated with the first $n$ eigenvalues (sorted in non-decreasing order and counted with their multiplicities) of the Laplace--Beltrami operator. Then, the discretized random field $\mathcal{Z}_n$ corresponds to a truncation of order $n$ of the series in \Cref{prop:def_gegf_sum_eigf} that defines the random field~$\mathcal{Z}$. Hence, we have a direct extension to Riemannian manifolds of the spectral methods used to sample isotropic random fields with spectral density $\gamma^2$ on a bounded domain of $\R^d$~\cite{chiles1999geost} or a sphere~\cite{lang2015isotropic}. 
In practice though, for arbitrary compact, connected and  oriented Riemannian manifolds, the eigenfunctions of the Laplace--Beltrami operator are not readily available and must be computed numerically, rendering such spectral methods potentially cumbersome. But since the only requirement on $V_n$ was for this space to be a finite-dimensional subspace of $\dot{H}^1$,  \Cref{th:cov_weights}  is applicable to more general choices of approximation spaces~$V_n$. 

\section{Chebyshev approximation of the discretized random field}
\label{sec:cheb}\label{sec:motiv_cheb}

Since the weights of the discretized random field characterized in \Cref{th:cov_weights} form a centered Gaussian random vector, they are entirely characterized by their covariance matrix. We show how the particular form of this covariance matrix can be used to propose efficient sampling methods.

Let $\bm Z$ be the centered Gaussian random vector generating~$\mathcal{Z}_n$ in \Cref{th:cov_weights}. Then, $\bm Z$ can be expressed as the solution to the linear system
\begin{equation*}
\big(\sqrt{\bm C}\big)^{T}\bm Z = \bm X,
\end{equation*}
where $\bm X$ is a centered Gaussian random vector with covariance matrix $\gamma^2(\bm S)$. In this section, we review ways of generating the right-hand side of this linear system.

A rather straightforward way to generate samples of $\bm X$ would be to compute the product
\begin{equation}
\bm X = \sqrt{\gamma^2(\bm S)} \bm W
\label{eq:sim_vec}
\end{equation}
where $\bm W \sim \mathcal{N}(\bm 0, \bm I)$ and $\sqrt{\gamma^2(\bm S)}$ is a square-root of $\gamma^2(\bm S)$, i.e., a matrix satisfying $\gamma^2(\bm S)=\sqrt{\gamma^2(\bm S)}\big(\sqrt{\gamma^2(\bm S)}\big)^T$.
Suitable choices are the Cholesky factorization of $\gamma^2(\bm S)$ and the matrix $\gamma(\bm S)$. 
However these choices would entail to fully diagonalize the matrix~$\bm S$ since they rely on matrix functions. This requires a workload of $\mathcal{O}(n^3)$ operations and a storage space of $\mathcal{O}(n^2)$. To reduce these high costs, we propose to use a polynomial approximation of the square-root based on Chebyshev series instead.

Let $\bm X$ be a sample of the weights obtained through the relation
\begin{equation}
\bm X =\gamma(\bm S)\bm W
\label{eq:prod_gs}
\end{equation}
where $\bm W \sim \mathcal{N}(\bm 0, \bm I)$.  Note that in the particular case where $\gamma = P$ is a polynomial of degree~$K$ with coefficients $a_0, \dots,a_{K}\in\R$, we have
\begin{equation*}
\begin{aligned}
\bm X 
&=\bm V
\Diag\big(P(\lambda_{1}^{(n)}), \dots, P(\lambda_{n}^{(n)})\big)
\bm V^T\bm W
=\sum_{k=0}^Ka_k\bm S^k\bm W \peq
\end{aligned}
\end{equation*}
This means in particular that the product $P(\bm S)\bm W$ can be computed iteratively, while requiring at each iteration only a single product between $\bm S$ and a vector. Hence, no diagonalization of the matrix is needed in this case.
Building on this idea, we propose to approximate, for a general function $\gamma$, the vector $\bm X$ in \eqref{eq:prod_gs} by the vector $\widehat{\bm X}$ defined by
\begin{equation*}
\widehat{\bm X} =P_{\gamma,K}(\bm S)\bm W \sveq
\end{equation*}
where $P_{\gamma,K}$ is a polynomial approximation of degree $K\in\N$ of $\gamma$, over an interval containing all the eigenvalues of $\bm S$. In particular, since $\bm S$ is positive semi-definite, we consider this interval to be $[0, \lambda_{\max}]$ where $\lambda_{\max}$ is some upper bound of the greatest eigenvalue of $\bm S$.

We choose the basis of Chebyshev polynomials (of the first kind) to compute the expression of the approximating polynomial $P_{\gamma,K}$. These polynomials  are the family  $\lbrace T_k\rbrace_{k\in\mathbb{N}_0}$ of polynomials defined over $[-1,1]$ by:
\begin{equation}
	T_k(\cos\theta)=\cos(k\theta), \quad \theta\in [-\pi, \pi], \quad k\in\N_0, 
	\label{eq:cheb_cos}
\end{equation}
or equivalently via the recurrence relation:
\begin{equation}
	T_0(t) =1, \quad
	T_1(t) =t, \quad
	T_{k+1}(t) =2t\,T_k(t)-T_{k-1}(t) \quad k\geq 1 \peq
	\label{eq:cheb_rec}
\end{equation}
Note in particular that for any $k\in\N_0$, $T_k$ is a polynomial of degree $k$ and that for any $t\in [-1, 1]$, $\vert T_k(t) \vert \le 1$.
A remarkable property of Chebyshev polynomials is that they form a set of orthogonal functions of the space $L^2_c([-1,1])$ defined by
\begin{equation*}
	L^2_c([-1,1])=\bigg\lbrace f : [-1, 1] \rightarrow \R \text{ such that } \int_{-1}^1 f(t)^2\frac{\dd t}{\sqrt{1-t^2}} < +\infty\bigg\rbrace 
\end{equation*}
and equipped with the inner product $\langle \cdot, \cdot \rangle_c$ defined by
\begin{equation*}
	\langle f, g\rangle_c = \int_{-1}^1 f(t)g(t)\frac{\dd t}{\sqrt{1-t^2}} \peq
\end{equation*}
As such, the truncated Chebyshev series of order $K\ge 0$ of any function $f\in L^2_c([-1, 1])$ is the polynomial of degree (at most) $K$ given by
\begin{equation}
	\mathcal{S}_K[f](t) = \frac{1}{2}c_0T_0(t)+\sum\limits_{k=1}^K c_k T_k(t), \quad t\in [-1,1] \veq
	\label{eq:cheb_sum}
\end{equation}
where the coefficients $c_k$ are defined by
\begin{equation}
	c_k = \frac{2}{\pi}\langle f, T_k\rangle_c, \quad k\ge 0 \peq
	\label{eq:coef_cheb_sum}
\end{equation}
 Truncated Chebyshev series of continuous functions are pointwise convergent in the $L^2_c$-sense \cite[Theorem 5.6]{mason2002chebyshev}, and for \psds they are uniformly convergent (cf. Appendix \ref{sec:cheb_pres} for more details). This motivates their use to approximate a \psd $\gamma$. Besides, using truncated Chebyshev series also guarantees:
\begin{itemize}
	\item the fact that at any order of approximation $K$, the polynomial $P_{\gamma,K}$ is near optimal in the sense that
	\begin{equation*}
		\Vert B_{\gamma}^* - \gamma\Vert_{\infty} \le \Vert P_{\gamma,K} - \gamma\Vert_{\infty} \le (1+\Lambda_K)\Vert B_{\gamma}^* - \gamma\Vert_{\infty},
	\end{equation*}
	where $B_{\gamma}^*$ is the best polynomial approximation of $\gamma$ of order $K$ and 
	$$\Lambda_K= (4/\pi^2)\log(K)+C+\mathcal{O}(K^{-1}),$$
	where $C\approx 1.27$ is the so-called Lebesgue constant of the approximation \cite[Chapter 5, Section 5]{mason2002chebyshev};
	
	\item the fact that the coefficients of the polynomial in the Chebyshev basis of polynomials can be computed very efficiently using the Fast Fourier Transform (FFT) algorithm \cite{cooley1965algorithm}, with a complexity that can be bounded by $\mathcal{O}(K\log K)$ to compute $K$ coefficients (see \cite[Section B.4.4]{pereira2019generalized} for an algorithm).
\end{itemize}

Since Chebyshev polynomials are defined on $[-1, 1]$, the interval of approximation $[0, \lambda_{\max}]$ must be mapped onto $[-1, 1]$ and vice versa, which is done with the linear change of variable $\theta : [-1,1] \rightarrow [0,  \lambda_{\max}]$, given by $\theta(t)=0.5\lambda_{\max}(1+t)$, $t\in [-1, 1]$.
The function $\tilde{\gamma} : [-1, 1] \rightarrow \R$ given by
\begin{equation}
\tilde{\gamma}(t)=\gamma\left(\theta(t)\right) , \quad
t\in [-1, 1]  \veq
\label{eq:chang_gamma}
\end{equation}
can then be approximated by a truncated Chebyshev series of order $K$, and the polynomial $P_{\gamma,K}$ approximating $\gamma$ on $[0,\lambda_{\max}]$ takes the form
\begin{equation}
P_{\gamma,K}(\lambda)
=\mathcal{S}_K[\tilde\gamma]\left(\theta^{-1}(\lambda)\right)
=\mathcal{S}_K[\tilde\gamma]\left(2\lambda_{\max}^{-1}\lambda-1\right), \quad \lambda\in[0, \lambda_{\max}] \veq
\label{eq:p_gamma}
\end{equation}
where $\mathcal{S}_K[\tilde{\gamma}]$ is the truncation of order $K$ of the Chebyshev series of $\tilde{\gamma}$.

Ultimately, the approximation  $\widehat{\mathcal{Z}}_{n,K}$ of the discretized field $\mathcal{Z}_n$ that results from the polynomial approximation introduced in this subsection takes the form
\begin{equation*}
\widehat{\mathcal{Z}}_{n,K} = \sum_{k=1}^n \widehat{Z}_k \psi_k ,
\end{equation*}
where the random weights $\widehat{\bm Z} = (\widehat{Z}_1, \dots, \widehat{Z}_n)^T$ are given by
\begin{equation}
\widehat{\bm Z}
=\big(\sqrt{\bm C}\big)^{-T}P_{\gamma,K}(\bm S)\bm W
=\big(\sqrt{\bm C}\big)^{-T}\sum_{k=0}^K c_k T_k(2\lambda_{\max}^{-1}\bm S-\bm I)\bm W
\label{eq:approx_cheb_zn}
\end{equation}
with $\bm W \sim \mathcal{N}(\bm 0, \bm I)$ and $c_0, \dots, c_K$ denote the first $K$ coefficients of the Chebyshev series of~$\tilde{\gamma}$. We call $\widehat{\mathcal{Z}}_{n,K}$ a \textit{Galerkin--Chebyshev approximation} of discretization order $n\in\N$ and polynomial order $K\in\N$ of the Gaussian random field $\mathcal{Z}$.


\section{Convergence analysis}
\label{sec:err}

The goal of this section is to derive the overall error between the random field $\mathcal{Z}$, as defined in~\eqref{eq:def_z}, and its Galerkin--Chebyshev approximation $\widehat{\mathcal{Z}}_{n,K}$ associated with a functional discretization space~$V_n$ of dimension $n$ and a Chebyshev polynomial approximation of order $K$ of the \psd. To derive this error, we assume for simplicity that the upper bound $\lambda_{\max}$ of the eigenvalues of the stiffness matrix $\bm S$ (on which the Chebyshev polynomial approximation is defined) is equal to the maximal eigenvalues of $\bm S$, i.e., $\lambda_{\max} = \lambda_n^{(n)}$. 

To prove convergence result between $\mathcal{Z}$ and $\widehat{\mathcal{Z}}_{n,K}$,  we need an additional assumption on the space $V_n$, or more precisely on the approximating properties of the discretized operator $-\Delta_n$ that this space yields. We assume the following link between the eigenpairs of~$-\Delta_n$ and those of~$-\Delta_{\mathcal{M}}$ (arranged in non-decreasing order).

\begin{assumb}{}
	Let $\alpha>0$ be defined in \Cref{prop:weyl}. There exist constants $N_0, C_1, C_2>0$, $l_\lambda \in (0,1]$, and exponents $r, s > 0$ and $q\ge 1$, satisfying the inequality
	\begin{equation}
		\alpha q \le \min\lbrace 2s, r+\alpha\rbrace,
		\label{eq:exp}
	\end{equation} 
	such that for all $n \ge N_0$ and $k\in [\![M_0+1, n]\!]$, 
	\begin{equation}
	\begin{aligned}
	\vert\lambda_{k}^{(n)} - \lambda_k \vert &\le ~C_1\lambda_k^q n^{-r}, \quad
	\Vert e_{k}^{(n)} - e_k\Vert_{0}^2 &\le ~C_2 \lambda_k^q n^{-2s}
	\end{aligned},
	\label{eq:ineqConv}
	\end{equation}
	and {
		\begin{equation}
		\lambda_{k}^{(n)} \ge l_{\lambda} \lambda_k .
		\label{eq:weylb}
		\end{equation}
	}
	\label{assum:Lh}
\end{assumb}
\vspace{-1em}

\begin{rem}\label{rem:eqEig}
	In the assumption above, we do not need to treat the case $\lambda_k=0$ (i.e., $M_0\neq 0$ and $k\le M_0$). Indeed, recall that the manifold is connected, and that therefore $M_0\in\lbrace 0, 1\rbrace$.  Hence, if $\lambda_k=0$ arises, there is exactly one such eigenvalue to approximate, namely $\lambda_{1}=0$. And in this case, since the discretized operator $-\Delta_n$ is positive semi-definite, we  have $\lambda_1^{(n)}=0=\lambda_{1}$ for any $n\in\N$. The same conclusion can be derived for the eigenfunctions since in both cases, they can be taken equal to a constant function with value~$1$. 
\end{rem}

In \Cref{assum:Lh}, the requirement~\eqref{eq:ineqConv} states that eigenvalues and eigenfunctions of $-\Delta_n$ should asymptotically lie within a ball around the  eigenvalues and eigenfunctions of $-\Delta_{\mathcal{M}}$, where the radius of the ball may grow with the magnitude of the eigenvalue but, for a fixed index $k$, decreases as $n\rightarrow +\infty$.
The requirement~\eqref{eq:weylb} expresses that, asymptotically, the eigenvalues of $-\Delta_n$ should grow at the same rate as the eigenvalues of $-\Delta_{\mathcal{M}}$. This last requirement may seem redundant with the first one but ensures that, even for large indices $k\approx n$, the eigenvalues $\lambda_{k}^{(n)}$ do not stay too far away from $\lambda_k$ (which is not always ensured by the first requirement).

A straightforward example of a discretization space $V_n$ for which \Cref{assum:Lh} is satisfied is when $V_n$ is defined as the set containing the first $n$ eigenfunctions of the Laplace--Beltrami operator, since then $\lambda_{k}^{(n)}=\lambda_{k}$ and $e_k^{(n)}=e_k$ for any $k\in\bi 1, n\ei$. The resulting Galerkin--Chebyshev approximation of the field then amounts to a classical spectral method. In this case, one can use directly the Galerkin approximation of the random field for sampling purposes without requiring a Chebyshev polynomial approximation of the \psd (cf. \Cref{sec:trunc} for more details). However, considering this particular discretization space~$V_n$ implies that the eigenfunctions of the Laplace--Beltrami operator are known, which is seldom in practice.

An alternative to the spectral method consists in building the discretization space $V_n$ from basis functions of a finite element space. If the Riemannian manifold $(\mathcal{M}, g)$ is a bounded convex polygonal domain equipped with the Euclidean metric, and $V_n$ is the linear finite element space associated with a quasi-uniform triangulation of $\mathcal{M}$ with mesh size $h \lesssim n^{-1/d}$, then \Cref{assum:Lh} is satisfied for the exponents $r=s=\alpha=2/d$ and $q=2$ \cite[Theorems 6.1 \& 6.2]{strang1973analysis}. %

If now $\mathcal{M}$ is a smooth compact $2$-dimensional surface without boundary equipped with the metric $g$ induced by the Euclidean metric on $\R^3$ (and called pullback metric, see \cite[Chapter 13]{lee2013smooth} for more details), the surface finite element method (SFEM) provides a way to construct a finite element space on the surface $\mathcal{M}$ by \q{lifting} on $\mathcal{M}$ a linear finite element space defined on a polyhedral approximation of $\mathcal{M}$ that lies \q{close} to the surface (see \cite{dziuk1988finite} and \cite[Section 2.6]{demlow2009higher} for more details). The discretization space~$V_n$ can then be taken as the linear span of the lifted finite element basis functions defined on the polyhedral surface. One can show that, $\vert \lambda_{k}^{(n)}-\lambda_{k}\vert \lesssim \lambda_k^2 n^{-1}$ and that $\lambda_{k}^{(n)}\le \lambda_{k}$ (cf. Appendix~\ref{appen:sfem} for more details). Proving the eigenfunction inequality is open and ongoing work, but our numerical experiments in \Cref{sec:num} indicate that our error estimates hold.

\begin{rem}
	In practice, when using SFEM, it is usual to consider the eigenfunctions and eigenvalues of the discrete operator defined on the polyhedral approximation $\widehat{\mathcal{M}}$ of the surface $\mathcal{M}$ (as opposed to the original surface $\mathcal{M}$). In that case, $V_n$ is not a subset of functions of $\mathcal{M}$ but rather a subset of functions of $\widehat{\mathcal{M}}$, which is considered in the numerical experiments in \Cref{sec:num}. Then, the error on the approximation in $V_n$ of the eigenvalues and eigenvectors of the Laplace--Beltrami operator of $\mathcal{M}$ can be written as (see \cite{bonito2018priori}):
	\begin{equation*}
		\begin{aligned}
			\vert\lambda_{k}^{(n)} - \lambda_k \vert &\le ~\widehat C_1(\lambda_k) n^{-1}, \quad
			\Vert e_{k}^{(n)} - e_k\Vert_{0}^2 &\le ~\widehat C_2(\lambda_k) n^{-2}
		\end{aligned},
	\end{equation*}
	where the explicit dependence of the constants $\widehat{C}_1$ and $\widehat{C}_2$ on $\lambda_k$ is given in \cite{bonito2018priori}. 
	Hence, if one can write $C_1(\lambda_k)\lesssim \lambda_{k}^q$ and $C_2(\lambda_k)\lesssim \lambda_{k}^q$ for some $q\in [1,2]$, then \Cref{assum:Lh} is satisfied, which is ongoing work.
\end{rem}

We now state the main results of this section.

\begin{theorem}
Let Assumptions \ref{assum:alphabeta} and \ref{assum:Lh} be satisfied.
	Then, the approximation error of the random field $\mathcal{Z}$ by its Galerkin--Chebyshev  approximation $\widehat{\mathcal{Z}}_{n,K}$ of discretization order $n\in\N$ big enough and polynomial order $K\in\N$, satisfies
	\begin{equation*}
	\Vert \mathcal{Z} - \widehat{\mathcal{Z}}_{n,K} \Vert_{L^2(\Omega; H)} \le C_{\text{Galer}}\; n^{-\rho} 
	+ C_{\text{pol}}\; n^{\alpha\nu+1/2}(K-\nu)^{-\nu}
	\veq
	\end{equation*}
	where $C_{\text{Galer}}$ and $C_{\text{pol}}$  are constants independent of $n$ and $K$, $\rho=\min\left\lbrace s ;\; r;\; (\alpha\beta-1/2)\right\rbrace>0$, $\alpha>0$ is defined in \Cref{prop:weyl}, $r>0$ and $s>0$ are given in \Cref{assum:Lh}, and $\beta>0$ and $\nu\in\N$ as in \Cref{def:psd}. 	
	\label{th:err_all}
\end{theorem}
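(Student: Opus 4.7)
The plan is to split the error via the triangle inequality
$$\|\mathcal{Z}-\widehat{\mathcal{Z}}_{n,K}\|_{L^2(\Omega;H)}\le\|\mathcal{Z}-\mathcal{Z}_n\|_{L^2(\Omega;H)}+\|\mathcal{Z}_n-\widehat{\mathcal{Z}}_{n,K}\|_{L^2(\Omega;H)}$$
and to show that the two pieces give exactly the two terms in the claimed bound. For the Galerkin error I would first couple the discretized white noise to $\mathcal{W}$ by setting $\mathcal{W}_n=\sum_{k=1}^n\langle\mathcal{W},e_k^{(n)}\rangle e_k^{(n)}$, which is a white noise on $V_n$ in distribution thanks to the orthonormality of $\{e_k^{(n)}\}_{k=1}^n$. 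Then both $\mathcal{Z}$ and $\mathcal{Z}_n$ arise as the action of the same underlying white noise under the Hilbert--Schmidt operators $G=\gamma(-\Delta_{\mathcal{M}})$ and $G_n=\gamma(-\Delta_n)\pi_n$, where $\pi_n\colon H\to V_n$ is the $L^2$-orthogonal projection, so that the white noise isometry yields $\|\mathcal{Z}-\mathcal{Z}_n\|_{L^2(\Omega;H)}^2=\|G-G_n\|_{HS}^2$.

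Evaluating this Hilbert--Schmidt norm in an orthonormal basis of $H$ obtained by completing $\{e_k^{(n)}\}_{k=1}^n$ with an orthonormal basis of $V_n^\perp$, and expanding $\gamma(-\Delta_{\mathcal{M}})e_k^{(n)}$ in the eigenbasis $\{e_j\}_{j\in\mathbb{N}}$, a direct calculation gives
$$\|G-G_n\|_{HS}^2=\sum_{k=1}^n\sum_{j\in\mathbb{N}}\bigl(\gamma(\lambda_j)-\gamma(\lambda_k^{(n)})\bigr)^2(e_k^{(n)},e_j)_0^2+\sum_{j\in\mathbb{N}}\gamma(\lambda_j)^2\|(I-\pi_n)e_j\|_0^2.$$
I would split each sum according to whether indices are $\le n$ or $>n$. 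The diagonal $j=k\le n$ contribution is bounded via a Taylor expansion using $|\gamma'(\lambda)|\lesssim\lambda^{-(1+\beta)}$ from \Cref{def:psd} together with $\lambda_k^{(n)}\ge l_\lambda\lambda_k$ and $|\lambda_k-\lambda_k^{(n)}|\lesssim\lambda_k^q n^{-r}$, which, after estimating $\sum_{k\le n}\lambda_k^{2(q-1-\beta)}$ via Weyl's law and using $\alpha q\le r+\alpha$, yields a term of order $n^{-2\min(r,\alpha\beta-1/2)}$. The off-diagonal part and the $j\le n$ eigenfunction part of the second sum exploit the unit-norm identity $\sum_{j\neq k}(e_k^{(n)},e_j)_0^2=1-(e_k^{(n)},e_k)_0^2\le\|e_k^{(n)}-e_k\|_0^2$, combined with $\|e_k^{(n)}-e_k\|_0^2\lesssim\lambda_k^q n^{-2s}$ and the constraint $\alpha q\le 2s$, to produce a term of order $n^{-2\min(s,\alpha\beta-1/2)}$. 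The tail $j>n$ of the second sum is handled directly by $\gamma(\lambda_j)^2\lesssim\lambda_j^{-2\beta}$ and \Cref{prop:weyl}, yielding $\sum_{j>n}j^{-2\alpha\beta}\lesssim n^{-2(\alpha\beta-1/2)}$. Summing all three contributions produces $\|\mathcal{Z}-\mathcal{Z}_n\|_{L^2(\Omega;H)}^2\lesssim n^{-2\rho}$ with $\rho=\min\{s,r,\alpha\beta-1/2\}$.

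For the Chebyshev error I would use the representation of $\mathcal{Z}_n-\widehat{\mathcal{Z}}_{n,K}$ in the basis $\{\psi_k\}$ whose coefficient vector equals $(\sqrt{\bm C})^{-T}(\gamma(\bm S)-P_{\gamma,K}(\bm S))\bm W$, and exploit the identity $(\sqrt{\bm C})^{-1}\bm C(\sqrt{\bm C})^{-T}=\bm I$ to obtain
$$\mathbb{E}\|\mathcal{Z}_n-\widehat{\mathcal{Z}}_{n,K}\|_0^2=\mathrm{tr}\bigl((\gamma(\bm S)-P_{\gamma,K}(\bm S))^2\bigr)=\sum_{k=1}^n\bigl(\gamma(\lambda_k^{(n)})-P_{\gamma,K}(\lambda_k^{(n)})\bigr)^2\le n\,\|\gamma-P_{\gamma,K}\|_{L^\infty([0,\lambda_{\max}])}^2.$$
The standard $L^\infty$ estimate for the truncated Chebyshev series of a function whose $\nu$-th derivative is of bounded variation (Appendix~\ref{sec:cheb_pres}) then gives $\|\gamma-P_{\gamma,K}\|_{L^\infty([0,\lambda_{\max}])}\lesssim(\lambda_{\max}/2)^\nu(K-\nu)^{-\nu}$, while \Cref{assum:Lh} combined with Weyl's law yields $\lambda_{\max}=\lambda_n^{(n)}\lesssim n^\alpha$, producing the announced $n^{\alpha\nu+1/2}(K-\nu)^{-\nu}$ rate. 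The main obstacle is the careful bookkeeping in the Hilbert--Schmidt expansion, in particular ensuring that the off-diagonal terms do not worsen the rate; this is precisely where the compatibility condition $\alpha q\le\min\{2s,r+\alpha\}$ of \Cref{assum:Lh} is decisive, as it calibrates the factors $\lambda_k^q$ appearing in the eigenpair error estimates against the Weyl growth $\lambda_k\lesssim k^\alpha$ so that the inner sums $\sum_{k\le n}\lambda_k^{q-2\beta}$ and $\sum_{k\le n}\lambda_k^{2(q-1-\beta)}$ are compensated by the factors $n^{-2s}$ and $n^{-2r}$ and collapse to rates no slower than $n^{-2(\alpha\beta-1/2)}$.
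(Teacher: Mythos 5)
Your overall architecture---triangle inequality into a Galerkin error plus a Chebyshev error, the latter bounded by $n^{1/2}\|\gamma-P_{\gamma,K}\|_{L^\infty([0,\lambda_{\max}])}$ via the bounded-variation Chebyshev estimate and $\lambda_{\max}\lesssim n^{\alpha}$---is exactly the paper's (Theorems \ref{th:err_fem} and \ref{prop:conv_cheb_ap}), and your Chebyshev half coincides with the paper's proof essentially line by line. The Galerkin half takes a genuinely different route. The paper couples $\mathcal{Z}$ and $\mathcal{Z}_n$ by matching the $k$-th noise coefficient in the eigenbasis of $-\Delta_{\mathcal{M}}$ with the $k$-th coefficient in the eigenbasis of $-\Delta_n$, inserts the truncation $\mathcal{Z}^{(n)}=\sum_{k\le n}W_k\gamma(\lambda_k)e_k$, and splits the remainder into a pure eigenfunction term and a pure eigenvalue term; independence of the $W_k$ kills all cross terms, so no off-diagonal quantities ever appear. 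You instead couple through the white noise itself ($\mathcal{W}_n=\pi_n\mathcal{W}$ expanded in the discrete eigenbasis) and reduce to $\|G-G_n\|_{HS}$, which is arguably the more natural coupling (it is the one the sampling algorithm realizes, and it exposes the operator-approximation structure), but it generates the extra double sum $\sum_{k\le n}\sum_{j\neq k}\bigl(\gamma(\lambda_j)-\gamma(\lambda_k^{(n)})\bigr)^2(e_k^{(n)},e_j)_0^2$ that the paper's coupling avoids entirely.

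That off-diagonal sum is the one place where your sketch, as written, would fail. You propose to control it by $\sum_{j\neq k}(e_k^{(n)},e_j)_0^2\le\|e_k^{(n)}-e_k\|_0^2\lesssim\lambda_k^q n^{-2s}$, but the prefactor $\bigl(\gamma(\lambda_j)-\gamma(\lambda_k^{(n)})\bigr)^2$ is only $O(1)$ uniformly in $j$ (take $j$ small and $k$ large: $\gamma(\lambda_j)=O(1)$ while $\gamma(\lambda_k^{(n)})\to 0$), so pulling out its supremum leaves $n^{-2s}\sum_{k\le n}\lambda_k^{q}\lesssim n^{-2s}n^{\alpha q+1}\le n$, which diverges; the compatibility condition $\alpha q\le 2s$ alone does not save this. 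The repair is standard but must be stated: split $\bigl(\gamma(\lambda_j)-\gamma(\lambda_k^{(n)})\bigr)^2\le 2\gamma(\lambda_j)^2+2\gamma(\lambda_k^{(n)})^2$; for the second piece sum over $j$ first and use your unit-norm identity together with $\gamma(\lambda_k^{(n)})^2\lesssim\lambda_k^{-2\beta}$ (this is where \eqref{eq:weylb} enters), giving $n^{-2s}\sum_{k}\lambda_k^{q-2\beta}$; for the first piece swap the order of summation and use $\sum_{k\le n,\,k\neq j}(e_k^{(n)},e_j)_0^2\le\min\{1,\|e_j-e_j^{(n)}\|_0^2\}$, which folds it into the terms you already control. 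With that repair your Hilbert--Schmidt computation delivers the same $n^{-2\rho}$ as the paper, up to the logarithmic factors in the critical cases $2s=2\alpha\beta-1$ or $2r=2\alpha\beta-1$ that Theorem \ref{th:err_fem} records and Theorem \ref{th:err_all} silently drops.
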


When the \psd $\gamma$ is known to be analytic over $[0,\lambda_{\max}]$ (meaning in particular that in \Cref{def:psd} any $\nu\in\N$ works), the polynomial approximation error can be shown to decrease at an exponential rate. The resulting overall error between the random field $\mathcal{Z}$ and its approximation $\widehat{\mathcal{Z}}_{n,K}$ can then be upper bounded as stated in the next result.

\begin{corol}
		Let Assumptions \ref{assum:alphabeta} and \ref{assum:Lh} be satisfied and  let $\gamma$ be a \psd such that there exists some $\chi >0$ such that the map $z \in\mathbb{C} \mapsto \gamma(z)$ is holomorphic inside the ellipse $E_\chi\subset \mathbb{C}$ centered at $z=\lambda_{\max}/2$, with foci $z_1=0$ and $z_2=\lambda_{\max}$, and semi-major axis $a_\chi=\lambda_{\max}/2+\chi$.  
	
	Then, the approximation error of the random field $\mathcal{Z}$ by its Galerkin--Chebyshev  approximation $\widehat{\mathcal{Z}}_{n,K}$ of discretization order $n\in\N$ big enough and polynomial order $K\in\N$, satisfies
	\begin{equation*}
		\Vert \mathcal{Z} - \widehat{\mathcal{Z}}_{n,K} \Vert_{L^2(\Omega; H)} \le C_{\text{Galer}}\;n^{-\rho} 
		+ \tilde C_{\text{pol}}\;
		n^{(\alpha+1)/2}\exp(-\widehat C_{\text{pol}}\; n^{-\alpha/2}K)
		\veq
	\end{equation*}
	where $C_{\text{Galer}}$. $\tilde C_{\text{pol}}$ and $\widehat C_{\text{pol}}$  are constants independent of $n$ and $K$, $\rho=\min\left\lbrace s ;\; r;\; (\alpha\beta-1/2)\right\rbrace>0$, $\alpha>0$ is defined in \Cref{prop:weyl}, $r>0$ and $s>0$ are given in \Cref{assum:Lh}, and $\beta>0$ as in \Cref{def:psd}.

	\label{th:err_all_ana}
\end{corol}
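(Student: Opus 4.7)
My plan is to mimic the structure of the proof of \Cref{th:err_all}: split the error as
$\Vert \mathcal{Z} - \widehat{\mathcal{Z}}_{n,K}\Vert_{L^2(\Omega; H)} \le \Vert \mathcal{Z}-\mathcal{Z}_n\Vert_{L^2(\Omega; H)} + \Vert \mathcal{Z}_n - \widehat{\mathcal{Z}}_{n,K}\Vert_{L^2(\Omega; H)}$, keep the first (Galerkin) term exactly as in \Cref{th:err_all}, and only refine the bound on the second (Chebyshev) term by exploiting analyticity through Bernstein's theorem instead of the ``finitely smooth'' bound used for the general \psd case.

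For the polynomial term I would first reduce to a supremum bound in the same way as in the proof of \Cref{th:err_all}: using \eqref{eq:approx_cheb_zn} and the isometry from \Cref{prop:diag_lapl_discr}, one gets
$\Vert \mathcal{Z}_n - \widehat{\mathcal{Z}}_{n,K}\Vert_{L^2(\Omega; H)}^2
= \mathbb{E}\big[\Vert (\gamma(\bm S)-P_{\gamma,K}(\bm S))\bm W\Vert_2^2\big]
= \operatorname{Trace}\bigl((\gamma-P_{\gamma,K})^2(\bm S)\bigr)
\le n\,\Vert \gamma - P_{\gamma,K}\Vert_{\infty,[0,\lambda_{\max}]}^2,$
so it remains to bound $\Vert \gamma - P_{\gamma,K}\Vert_{\infty,[0,\lambda_{\max}]}$ by $n^{\alpha/2}\exp(-\widehat C_{\text{pol}}\,n^{-\alpha/2}K)$.

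To get this exponential rate, transport the ellipse $E_\chi$ to the variable $t\in[-1,1]$ through the affine change of variable $\theta$ of \Cref{sec:cheb}. A direct computation shows that $\theta^{-1}(E_\chi)$ is the Bernstein ellipse with foci $\pm1$ and semi-major axis $a=1+2\chi/\lambda_{\max}$, hence has Bernstein parameter $\rho=a+\sqrt{a^2-1}$. Since $\lambda_{\max}=\lambda_n^{(n)}$ and \Cref{assum:Lh} together with Weyl's law ensure $\lambda_{\max}\asymp n^{\alpha}$ for $n$ large enough, expanding gives $\rho - 1 \sim 2\sqrt{2\chi/\lambda_{\max}}\gtrsim n^{-\alpha/2}$, so that $\log\rho \ge \widehat C_{\text{pol}}\, n^{-\alpha/2}$ for an explicit constant. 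Because $\tilde\gamma=\gamma\circ\theta$ is holomorphic inside this Bernstein ellipse and bounded there by $M_\gamma=\max_{z\in E_\chi}|\gamma(z)|$ (a quantity independent of $n$), the classical Bernstein estimate for truncated Chebyshev series \cite[Ch.~8]{mason2002chebyshev} yields
$\Vert \tilde\gamma - \mathcal{S}_K[\tilde\gamma]\Vert_{\infty,[-1,1]} \le \frac{2M_\gamma}{\rho-1}\,\rho^{-K} \le C\,n^{\alpha/2}\exp(-\widehat C_{\text{pol}}\,n^{-\alpha/2}K).$
Multiplying by the $\sqrt n$ from the trace bound produces the claimed prefactor $n^{(\alpha+1)/2}$ and combining with the Galerkin term concludes the proof.

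The only real subtlety is the quantitative control of the Bernstein parameter: one must verify that $\rho-1$ stays bounded below by a positive multiple of $n^{-\alpha/2}$ \emph{uniformly} in $n$, which requires the two-sided estimate $\lambda_n^{(n)}\asymp n^\alpha$. The upper bound $\lambda_n^{(n)}\lesssim n^\alpha$ follows from \eqref{eq:ineqConv} combined with Weyl's law and the constraint $\alpha q\le r+\alpha$ of \Cref{assum:Lh}, while the lower bound $\lambda_n^{(n)}\gtrsim n^\alpha$ follows from \eqref{eq:weylb} and the lower bound in \Cref{prop:weyl}; this is precisely the role that these parts of \Cref{assum:Lh} play in the argument.
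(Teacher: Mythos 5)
Your proposal is correct and follows essentially the same route as the paper: the triangle-inequality split with the Galerkin term handled by \Cref{th:err_fem}, the reduction of the Chebyshev term to $\sqrt{n}\,\Vert\gamma-P_{\gamma,K}\Vert_\infty$, the Bernstein-ellipse estimate for analytic functions (the paper's quantity $\epsilon_\chi$ is exactly your $\rho-1$), and the two-sided control $\lambda_n^{(n)}\asymp n^\alpha$ from \Cref{assum:Lh} and Weyl's law to get $\rho-1\asymp n^{-\alpha/2}$. The only blemish is the harmless $\sqrt{2}$ in your asymptotic $\rho-1\sim 2\sqrt{2\chi/\lambda_{\max}}$ (it should be $2\sqrt{\chi/\lambda_{\max}}$), which is absorbed into the constants.
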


We prove these two error estimates by upper bounding the left-hand side by the sum of a discretization error and a polynomial approximation error, both of which are derived in the next two subsections. The discretization error is computed in the more general setting on spaces $\dot H^\sigma$ defined in \Cref{sec:func_lap} (with $\sigma=0$ giving the error on~$H$). We also provide an interpretation of the terms composing this error estimate, as well as a result on the convergence of the covariance of the discretization scheme.

\subsection{Error analysis of the discretized field}

In this section, a convergence result of the discretized field $\mathcal{Z}_n$ is derived in terms of a root-mean-squared error on the spaces $\dot H^\sigma$ defined in \Cref{sec:func_lap}.

\begin{theorem}
	Let Assumptions \ref{assum:alphabeta} and \ref{assum:Lh}  be satisfied.
	Then, there exists $N_1\in\N$ such that for any $n>N_1$, and 
	$\sigma \in [0, \alpha^{-1}(2\alpha\beta -1))$,
	the approximation error of the random field $\mathcal{Z}$ by its discretization $\mathcal{Z}_n$  satisfies
	\begin{equation}\label{eq:err_str}
	\Vert \mathcal{Z} - \mathcal{Z}_n \Vert_{L^2(\Omega; \dot{H}^\sigma)} \lesssim 
	\begin{cases}
	n^{-\min\left\lbrace s ;\; r;\; (2\alpha\beta-1-\alpha\sigma)/2\right\rbrace}(\log n)^{1/2} 
	& \text{if } (2\alpha\beta-1-\alpha\sigma)/2 = s,\\
	n^{-\min\left\lbrace s ;\; r;\; (2\alpha\beta-1-\alpha\sigma)/2\right\rbrace}(\log n)^{1/2} 
	& \text{if } (2\alpha\beta-1-\alpha\sigma)/2 = r \text{ and } q>1,\\
	n^{-\min\left\lbrace s ;\; r;\; (2\alpha\beta-1-\alpha\sigma)/2\right\rbrace}
	& \text{else},
	\end{cases}
	\end{equation}
	where $\alpha>0$ is defined in \Cref{prop:weyl}, $q\ge 1$, $r>0$ and $s>0$ are given in \Cref{assum:Lh}, and $\beta>0$ as in \Cref{def:psd}.
	\label{th:err_fem}
\end{theorem}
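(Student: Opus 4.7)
Because both $\mathcal Z$ and $\mathcal Z_n$ are Gaussian series driven by i.i.d.\ standard normals, I would couple them by using the same sequence $\{W_k\}_{k\in\N}$: the first $n$ weights play the role of the coefficients of $\mathcal W_n$ in the orthonormal eigenbasis $\{e_k^{(n)}\}$ of $-\Delta_n$ from \Cref{prop:diag_lapl_discr}, while the tail indices $k>n$ are used only for $\mathcal Z$. Under this coupling the error splits into an interior and a tail contribution,
\begin{equation*}
	\mathcal Z-\mathcal Z_n \;=\; \underbrace{\sum_{k=1}^{n} W_k\bigl(\gamma(\lambda_k)\,e_k-\gamma(\lambda_k^{(n)})\,e_k^{(n)}\bigr)}_{\mathcal E_1^{(n)}} \;+\; \underbrace{\sum_{k>n} W_k\,\gamma(\lambda_k)\,e_k}_{\mathcal E_2^{(n)}},
\end{equation*}
and since the two sums involve disjoint independent Gaussians while $\|e_k\|_\sigma^2=\lambda_k^\sigma$ for $k>M_0$, one gets
\begin{equation*}
	\e\bigl[\Vert\mathcal Z-\mathcal Z_n\Vert_\sigma^2\bigr] \;=\; \sum_{k=1}^{n}\bigl\Vert\gamma(\lambda_k)e_k-\gamma(\lambda_k^{(n)})e_k^{(n)}\bigr\Vert_\sigma^2 \;+\; \sum_{k>n}\lambda_k^\sigma\gamma(\lambda_k)^2.
\end{equation*}

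\textbf{Tail and pointwise interior bounds.} The tail is handled directly: combining the decay $|\gamma(\lambda)|\lesssim\lambda^{-\beta}$ from \Cref{def:psd} with Weyl's law $\lambda_k\gtrsim k^\alpha$ from \Cref{prop:weyl} gives
\begin{equation*}
	\sum_{k>n}\lambda_k^\sigma\gamma(\lambda_k)^2 \;\lesssim\; \sum_{k>n} k^{\alpha(\sigma-2\beta)} \;\lesssim\; n^{\alpha\sigma+1-2\alpha\beta},
\end{equation*}
which is finite exactly under the hypothesis $\sigma<2\beta-1/\alpha$ and contributes the rate $(2\alpha\beta-1-\alpha\sigma)/2$. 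For the interior, I would split each summand as
\begin{equation*}
	\gamma(\lambda_k)e_k-\gamma(\lambda_k^{(n)})e_k^{(n)} \;=\; \bigl(\gamma(\lambda_k)-\gamma(\lambda_k^{(n)})\bigr)\,e_k \;+\; \gamma(\lambda_k^{(n)})\,(e_k-e_k^{(n)}),
\end{equation*}
which is the decomposition adapted to the $\dot H^\sigma$-norm since $\Vert e_k\Vert_\sigma=\lambda_k^{\sigma/2}$ is explicit. The first piece is controlled via the mean value theorem together with the derivative bound $|\gamma'(\lambda)|\lesssim\lambda^{-(1+\beta)}$ from \Cref{def:psd}, the spectral comparison \eqref{eq:weylb} (so that the intermediate point lies above $l_\lambda\lambda_k$), and the eigenvalue estimate $|\lambda_k-\lambda_k^{(n)}|\lesssim\lambda_k^{q}n^{-r}$, yielding a contribution of order $\lambda_k^{2(q-1-\beta)+\sigma}n^{-2r}$. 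The second piece uses $|\gamma(\lambda_k^{(n)})|\lesssim\lambda_k^{-\beta}$ together with the eigenfunction estimate $\Vert e_k-e_k^{(n)}\Vert_0\lesssim\lambda_k^{q/2}n^{-s}$; for $\sigma\in(0,1]$ one interpolates with the crude bound $\Vert e_k-e_k^{(n)}\Vert_1\lesssim\lambda_k^{1/2}$ coming from $\Vert e_k^{(n)}\Vert_1^2=1+\lambda_k^{(n)}$ and \eqref{eq:weylb}.

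\textbf{Summation, boundary cases, and main obstacle.} After invoking Weyl's law, each interior sum has the shape $\sum_{k=1}^n k^{a}\,n^{-b}$, and its order in $n$ is dictated solely by the sign of $a+1$. The constraint $\alpha q\le\min\{2s,r+\alpha\}$ from \Cref{assum:Lh} is precisely what forces the convergent regime to produce the clean rates $n^{-2s}$ (eigenfunction part) and $n^{-2r}$ (eigenvalue part), while ensuring that in the divergent regime the polynomial blow-up in $n$ is absorbed by the tail rate $(2\alpha\beta-1-\alpha\sigma)/2$; taking the worst of the three then gives the announced $\min\{s,r,(2\alpha\beta-1-\alpha\sigma)/2\}$. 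The logarithmic corrections appear exactly at the critical thresholds $a=-1$: the eigenfunction sum becomes $\log n$ when $s=(2\alpha\beta-1-\alpha\sigma)/2$, and the eigenvalue sum becomes $\log n$ when $r=(2\alpha\beta-1-\alpha\sigma)/2$, but only if $q>1$, since for $q=1$ the extra factor of $\lambda_k^{-1}$ inside $(\gamma(\lambda_k)-\gamma(\lambda_k^{(n)}))^2$ already secures uniform summability. The main obstacle I expect is to keep the $\dot H^\sigma$-estimates of $e_k-e_k^{(n)}$ sharp enough that the rates $s$ and $r$ do not degrade with $\sigma$: the naive interpolation $\Vert e_k-e_k^{(n)}\Vert_\sigma\lesssim\Vert e_k-e_k^{(n)}\Vert_0^{1-\sigma}\Vert e_k-e_k^{(n)}\Vert_1^{\sigma}$ loses a factor of $n^{s\sigma}$, so one must exploit finer structure, for instance the identity $1-(e_k,e_k^{(n)})_0=\tfrac12\Vert e_k-e_k^{(n)}\Vert_0^2$ and an eigenbasis expansion of $e_k^{(n)}$, to recover the uniform $s$-rate claimed in the statement.
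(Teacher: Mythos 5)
Your proposal follows essentially the same route as the paper's proof: a split into a spectral truncation tail and an interior sum over $k\le n$, a further two-term splitting of each interior summand, the mean value theorem combined with \eqref{eq:weylb} to control $\vert\gamma(\lambda_k)-\gamma(\lambda_k^{(n)})\vert$, and a Riemann-sum analysis that produces the three competing rates, the logarithmic corrections at the critical exponents, and the $q=1$ versus $q>1$ dichotomy. The only structural difference is that your interior splitting is the mirror image of the paper's: you write $\gamma(\lambda_k)e_k-\gamma(\lambda_k^{(n)})e_k^{(n)}=(\gamma(\lambda_k)-\gamma(\lambda_k^{(n)}))e_k+\gamma(\lambda_k^{(n)})(e_k-e_k^{(n)})$, whereas the paper uses $\gamma(\lambda_j)(e_j-e_j^{(n)})+(\gamma(\lambda_j)-\gamma(\lambda_j^{(n)}))e_j^{(n)}$. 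Both lead to the same estimates, since \eqref{eq:weylb} makes $\vert\gamma(\lambda_k^{(n)})\vert\lesssim\lambda_k^{-\beta}$ available; your variant is in fact slightly cleaner for the eigenvalue-difference term, whose $\dot{H}^\sigma$ norm is exactly $\lambda_k^{\sigma/2}\vert\gamma(\lambda_k)-\gamma(\lambda_k^{(n)})\vert$ because it is carried by the true eigenfunction $e_k$, while the paper must implicitly take $\Vert e_j^{(n)}\Vert_\sigma\approx\lambda_j^{\sigma/2}$.

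The one place where you stop short is the $\dot{H}^\sigma$ control of $e_k-e_k^{(n)}$ for $\sigma>0$, which you correctly single out as the obstacle: \Cref{assum:Lh} only provides an $L^2$ bound, and the $L^2$--$H^1$ interpolation you mention degrades the rate from $s$ to $s(1-\sigma)$, which is not enough for the stated theorem. The paper's own proof does not supply the missing argument either; it evaluates the $\dot{H}^\sigma$ norm of the Gaussian sum by attaching the weight $\lambda_j^{\sigma}$ to the $j$-th summand so that only $\Vert e_j-e_j^{(n)}\Vert_0$ ever appears, which as written amounts to asserting $\Vert e_j-e_j^{(n)}\Vert_\sigma^2\lesssim\lambda_j^\sigma\Vert e_j-e_j^{(n)}\Vert_0^2$ without justification (this can fail if $e_j^{(n)}$ has appreciable components on eigenfunctions $e_k$ with $k\gg j$). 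For $\sigma=0$ — the only case used downstream in \Cref{th:err_all} — your argument is complete and coincides with the paper's.
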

\vspace{-1em}

\begin{proof}
	Let $n>\max\lbrace M_0; N_0\rbrace$,  and let $\mathcal{Z}^{(n)}$ be the truncated random field of~$\mathcal{Z}$ given by
	\begin{equation*}
	\mathcal{Z}^{(n)}=\sum\limits_{k=1}^{n} W_j\gamma(\lambda_j) e_j \speq
	\end{equation*}
	We split the error with the triangle inequality into
	\begin{equation*}
	\Vert \mathcal{Z} - \mathcal{Z}_n \Vert_{L^2(\Omega; \dot{H}^\sigma)}
	\le \Vert \mathcal{Z} - \mathcal{Z}^{(n)} \Vert_{L^2(\Omega; \dot{H}^\sigma)} 
	+ \Vert \mathcal{Z}^{(n)} - \mathcal{Z}_n \Vert_{L^2(\Omega; \dot{H}^\sigma)}
	\end{equation*}
	and bound both terms in what follows.
	
	\vspace{0.5em}
	
	\noindent\underline{Truncation error term} $\Vert \mathcal{Z} - \mathcal{Z}^{(n)} \Vert_{L^2(\Omega; \dot{H}^\sigma)}$ :
	Note that
	\begin{equation*}
	\Vert \mathcal{Z} - \mathcal{Z}^{(n)} \Vert_{L^2(\Omega; \dot{H}^\sigma)}^2 
	=\e\bigg[\big\Vert \sum\limits_{j>n}W_j\gamma(\lambda_j) e_j \big\Vert_{\sigma}^2\bigg]
	=\sum\limits_{j>n}\lambda_j^\sigma\vert\gamma(\lambda_j)\vert^2 \sveq
	\end{equation*}
	which leads by \Cref{prop:weyl} and \Cref{assum:alphabeta} to
	\begin{equation}
	\left\Vert \mathcal{Z} - \mathcal{Z}^{(n)} \right\Vert_{L^2(\Omega; \dot{H}^\sigma)}^2 
	\lesssim \sum\limits_{j>n}  \lambda_j^{\sigma-2\beta} 
	\lesssim  \sum\limits_{j>n}  j^{-\alpha(2\beta-\sigma)} 
	\lesssim n^{-(2\alpha\beta-\alpha\sigma-1)}
	\sveq
	\label{eq:err_1_fem}
	\end{equation}
	where the last inequality is derived using a Riemann sum associated with the integration of the function $t\mapsto t^{-\alpha(2\beta-\sigma)}$ and using the assumption that $\alpha(2\beta-\sigma)>1$.
	
	·
	\vspace{0.5em}
	
	\noindent\underline{Discretization error} $\Vert \mathcal{Z}^{(n)} - \mathcal{Z}_n \Vert_{L^2(\Omega; \dot{H}^\sigma)}$ : 
	We split the error further by the triangle inequality into
	\begin{align*}
	&\Vert \mathcal{Z}^{(n)} -  \mathcal{Z}_n \Vert_{L^2(\Omega; \dot{H}^\sigma)}
	=\big\Vert \sum\limits_{j=1}^{n}W_j\gamma(\lambda_j)e_j  - \sum\limits_{j=1}^{n}W_j\gamma(\lambda_{j}^{(n)}) e_{j}^{(n)} \big\Vert_{L^2(\Omega; \dot{H}^\sigma)}  \\
	&\qquad \le \big\Vert \sum\limits_{j=1}^{n}W_j\gamma(\lambda_j) (e_j -e_{j}^{(n)} ) \big\Vert_{L^2(\Omega; \dot{H}^\sigma)}
	+\big\Vert \sum\limits_{j=1}^{n} W_j\left(\gamma(\lambda_j)-\gamma(\lambda_{j}^{(n)})\right)e_{j}^{(n)} \big\Vert_{L^2(\Omega; \dot{H}^\sigma)}\\
	& \qquad =(\text{I})+(\text{II})
	\speq
	\end{align*}
	%
	The first term satisfies
	\begin{align*}
	(\text{I})^2 
	&= \sum\limits_{j,k=1}^{n}\lambda_k^{\sigma(k)/2}\gamma(\lambda_k)\lambda_j^{\sigma(j)/2}\gamma(\lambda_j)\e\left[W_j{W_k}\right]\left( e_j-e_{j}^{(n)},  e_k-e_{k}^{(n)}\right)_{0} \sveq
	\end{align*}
	where for any $i\in\bi 1,n\ei$, $\sigma(i)=2$ if $\lambda_i=0$ and $\sigma(i)=\sigma$ otherwise. 
	Hence, using the independence of the Gaussian random weights $\lbrace W_j\rbrace_{j\in\N}$ and \Cref{assum:Lh},
	\begin{equation*}
	(\text{I})^2  
	= \vert\gamma(0)\vert^2\sum_{j=1}^{M_0}\Vert e_j-e_{j}^{(n)}\Vert_{0}^2 
	+ \sum_{j=M_0+1}^{n}\lambda_j^{\sigma}\vert\gamma(\lambda_j)\vert^2\Vert e_j-e_{j}^{(n)}\Vert_{0}^2 \speq
	\end{equation*}
	Following \Cref{rem:eqEig}, the first sum in $(\text{I})^2$ is $0$. It then follows from \Cref{assum:Lh}, \Cref{prop:weyl}, and \Cref{assum:alphabeta} that
	\begin{equation*}
	(\text{I})^2  \lesssim  n^{-2s}\bigg( 
	\sum_{j=M_0+1}^n\vert\lambda_j^{\beta}\gamma(\lambda_j)\vert^2\lambda_j^{q-2\beta+\sigma} \bigg)
	\lesssim  n^{-2s}\bigg( 
	\sum_{j=1}^nj^{\alpha(q-2\beta+\sigma)} \bigg)
	\speq
	\end{equation*}
	And using the fact that $\alpha q\le 2s$ (cf. \cref{eq:exp}), we finally obtain
	\begin{equation*}
	\begin{aligned}
	(\text{I})^2  
	&
	\lesssim  n^{-2s}\bigg( 
	\sum_{j=1}^nj^{2s-(2\alpha\beta-\alpha\sigma)} \bigg)
	\speq
	\end{aligned}
	\end{equation*}
	Bounding the sum again by the corresponding integral, we distinguish three cases:
	\begin{itemize}
		\item if $2s-(2\alpha\beta-\alpha\sigma)>-1$,  then $\sum_{j=1}^nj^{2s-(2\alpha\beta-\alpha\sigma)}  \lesssim n^{2s-(2\alpha\beta-\alpha\sigma)+1} $;
		\item if $2s-(2\alpha\beta-\alpha\sigma)=-1$,  then $\sum_{j=1}^nj^{2s-(2\alpha\beta-\alpha\sigma)}  \lesssim \log n $;
		\item if $2s-(2\alpha\beta-\alpha\sigma)<-1$,  then $\sum_{j=1}^nj^{2s-(2\alpha\beta-\alpha\sigma)}  \lesssim 1 $.
	\end{itemize}	
	Hence, we conclude
	\begin{equation*}
	(\text{I})^2 \lesssim
	\begin{cases}	
	n^{-2s}\log n &  \text{if } 2s = (2\alpha\beta-\alpha\sigma-1),\\
	n^{-\min\lbrace 2s ;\; (2\alpha\beta-\alpha\sigma-1)\rbrace} &  \text{otherwise},
	\end{cases} 
	\end{equation*}
	%
	%
	%
	and continue with bounding
	\begin{equation*}
	\begin{aligned}
	(\text{II})^2 
	=\sum\limits_{j=1}^{M_0}\vert\gamma(\lambda_j)-\gamma(\lambda_{j}^{(n)})\vert^2 
	+\sum\limits_{j=M_0+1}^{n}\lambda_j^\sigma\vert\gamma(\lambda_j)-\gamma(\lambda_{j}^{(n)})\vert^2 \speq
	\end{aligned}
	\label{eq:ineq_proof_2}
	\end{equation*}

	Following \Cref{rem:eqEig}, the first sum in $(\text{II})^2$ is $0$. We then focus on the terms composing the second sum.
	The mean value theorem gives for any $j\in\bi M_0+1,n\ei$,
	$$\vert\gamma(\lambda_j)-\gamma(\lambda_{j}^{(n)})\vert\le \vert\lambda_{j}^{(n)}-\lambda_j\vert  \sup\limits_{\theta \in (0,1)} \vert \gamma'(\theta\lambda_{j}^{(n)}+ (1-\theta)\lambda_{j})\vert 
	\speq$$

	We have, for $n>N_0$, $\min \big\lbrace\lambda_k; \lambda_{k}^{(n)}\big\rbrace \ge l_\lambda \lambda_k \ge l_\lambda c_\lambda k^{\alpha}$ as a consequence of \Cref{prop:weyl} and \Cref{assum:Lh}. We can therefore find $N_1 > N_0$ such that for any $n>N_1$ and any $j\in \bi N_1, n\ei$, $\min\big\lbrace\lambda_j ;\; \lambda_{j}^{(n)}\big\rbrace \ge l_\lambda c_\lambda k^{\alpha} \ge L_\gamma$, where $L_\gamma$ is defined in \Cref{def:psd}. Then, for any $j\in \bi N_1, n\ei$,
	$$\vert\gamma(\lambda_j)-\gamma(\lambda_{j}^{(n)})\vert
	\lesssim \vert\lambda_{j}^{(n)}-\lambda_j\vert  \left(\min\big\lbrace\lambda_j ;\; \lambda_{j}^{(n)}\big\rbrace \right)^{-(1+\beta)}
	\lesssim  \vert\lambda_{j}^{(n)}-\lambda_j\vert  j^{-\alpha(1+\beta)} .$$
	And for $j<N_1$, we can take
	$$\vert\gamma(\lambda_j)-\gamma(\lambda_{j}^{(n)})\vert\le S_\gamma'\vert\lambda_{j}^{(n)}-\lambda_j\vert, $$		 
	where $S_\gamma' = \sup_{ [0, L_\gamma]}\vert \gamma'\vert$.
	Therefore, using the last two inequalities (and applying again \Cref{prop:weyl} and \Cref{assum:Lh}), we get
	\begin{equation*}
	(\text{II})^2 
	\lesssim
	\sum\limits_{j=M_0+1}^{N_1-1} \lambda_j^\sigma\vert\lambda_{j}^{(n)}-\lambda_j\vert^2
	+\sum\limits_{j=N_1}^{n}  \lambda_j^\sigma\vert\lambda_{j}^{(n)}-\lambda_j\vert^2  j^{-2(1+\beta)}
	\lesssim
	n^{-2r}\bigg(1+\sum\limits_{j=N_1}^{n} j^{2\alpha(q-\beta-1)+\alpha\sigma}\bigg).
	\end{equation*}	 	
	If $q=1$, we  have $(\text{II})^2 \lesssim n^{-2r}$ since $2\alpha\beta-\alpha\sigma > 1$.
	If $q>1$, since $\alpha(q-1) \le r$, we obtain
	\begin{equation*}
	(\text{II})^2 
	\lesssim n^{-2r}\bigg(1+\sum\limits_{j=1}^{n} j^{2r-2\alpha\beta+\alpha\sigma}\bigg),
	\end{equation*}
	and using the same argument as for~$(\text{I})^2$, we conclude that
	\begin{equation*}
	(\text{II})^2 \lesssim
	\begin{cases}
	n^{-2r}\log n & \text{if } 2r = (2\alpha\beta-\alpha\sigma-1), \\
	n^{-\min\lbrace 2r ;\; (2\alpha\beta-\alpha\sigma-1)\rbrace} &  \text{else.} 
	\end{cases} 
	\end{equation*}
	
	%
	
	Combining the terms $(\text{I})$ and $(\text{II})$ finally gives, if $q>1$,
	\begin{equation}
	\begin{aligned}
	\Vert \mathcal{Z}^{(n)} &-  \mathcal{Z}_n \Vert_{L^2(\Omega; \dot{H}^\sigma)} \\
	&\lesssim
	\begin{cases}
	(\log n)^{1/2}( n^{-s} +n^{-r})
	& \text{if } \alpha(\beta-\sigma/2)-1/2 = s=r,\\
	(\log n)^{1/2}n^{-s} +n^{-\min\lbrace r, (\alpha(\beta-\sigma/2)-1/2)\rbrace}
	& \text{if } \alpha(\beta-\sigma/2)-1/2 = s\neq r,\\
	(\log n)^{1/2}n^{-r} +n^{-\min\lbrace s, (\alpha(\beta-\sigma/2)-1/2)\rbrace}
	& \text{if } \alpha(\beta-\sigma/2)-1/2 = r\neq s,\\
	n^{-\min\lbrace s, (\alpha(\beta-\sigma/2)-1/2)\rbrace} +n^{-\min\lbrace r, (\alpha(\beta-\sigma/2)-1/2)\rbrace}
	& \text{else.}
	\end{cases}
	\end{aligned}
	\label{eq:err_2_fem}
	\end{equation}
	and if $q=1$, 
	\begin{equation}
	\begin{aligned}
	\Vert \mathcal{Z}^{(n)} -  \mathcal{Z}_n \Vert_{L^2(\Omega; \dot{H}^\sigma)} 
	\lesssim
	\begin{cases}
	(\log n)^{1/2}n^{-s}+n^{-r}
	& \text{if } \alpha(\beta-\sigma/2)-1/2 = s\\
	n^{-\min\lbrace s, (\alpha(\beta-\sigma/2)-1/2)\rbrace} +n^{-r}
	& \text{else.}
	\end{cases}
	\end{aligned}
	\label{eq:err_2_fem1}
	\end{equation}	
	%
	The proof is concluded by bounding \Cref{eq:err_1_fem,eq:err_2_fem,eq:err_2_fem1} by the smallest exponents.\qed
\end{proof}

This error estimate~\eqref{eq:err_str} yields the same convergence rate as the one derived in \cite{BKK20,bolin2020rational} in their approximation of solutions to fractional elliptic SPDEs with spatial white noise, but our result differs from their result in three aspects. First, we defined our random fields on Riemannian manifolds. Then, the random fields covered by their result can be seen as those specific choices of $\gamma$ such that $\gamma$ is non-zero over $\R_+$. Finally, we use slightly different assumptions on the discretization space: in \Cref{assum:Lh}, we do not assume that $\lambda_{k}^{(n)}\ge \lambda_{k}$. This assumption holds in particular for finite element spaces associated with conforming triangulation and on domains of $\R^d$ \cite{strang1973analysis}, and dropping it allows to open the way to the use of non-conforming methods.

We conclude this subsection by investigating the overall error in the covariance between the random field~$\mathcal{Z}$ and its discretized counterpart $\mathcal{Z}_n$. This error is described in the next theorem and is derived using the same approach as in \Cref{th:err_fem}.

\begin{theorem}
	Let Assumptions \ref{assum:alphabeta} and \ref{assum:Lh}  be satisfied. Then, there exists some $N_2\in \N$ such that for any $n>N_2$, 
	the covariance error between the random field $\mathcal{Z}$ and its discretization $\mathcal{Z}_n$ satisfies, for any $\theta,\varphi \in H$,
	\begin{equation*}
	\begin{aligned}
	\big\vert \cov\left((\mathcal{Z}, \theta)_0 , (\mathcal{Z}, \varphi)_0\right) &-\cov\left((\mathcal{Z}_n, \theta)_0 , (\mathcal{Z}_n, \varphi)_0\right)\big\vert \\
	&\lesssim
	\begin{cases}
	n^{-\min\left\lbrace s ;\; r ;\; (2\alpha\beta-1)\right\rbrace}\log n  
	& \text{if } (2\alpha\beta-1) = s,\\
	n^{-\min\left\lbrace s ;\; r ;\; (2\alpha\beta-1)\right\rbrace}  \log n
	& \text{if } (2\alpha\beta-1) = r  \text{ and } q>1,\\
	n^{-\min\left\lbrace s ;\; r;\; (2\alpha\beta-1)\right\rbrace}
	& \text{else}.
	\end{cases}
	\end{aligned}
	\end{equation*}
	\label{th:err_cov}
\end{theorem}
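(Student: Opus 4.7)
The plan is to follow closely the strategy used to establish \Cref{th:err_fem}: expand both covariances in the respective eigenbases, split the resulting difference into a truncation contribution (indices $k>n$ of the expansion of $\mathcal Z$ given by \Cref{prop:def_gegf_sum_eigf}) and a discretization contribution (indices $k\in\bi 1,n\ei$), and treat each piece with the same tools that appear in the proof of \Cref{th:err_fem}. Concretely, setting $c_k^\theta=(\theta,e_k)_0$, $c_k^\varphi=(\varphi,e_k)_0$ and their discrete counterparts $\hat c_k^\theta=(\theta,e_k^{(n)})_0$, $\hat c_k^\varphi=(\varphi,e_k^{(n)})_0$, the difference to bound reads
\begin{equation*}
	\sum_{k>n}\gamma(\lambda_k)^2 c_k^\theta c_k^\varphi+\sum_{k=1}^{n}\bigl(\gamma(\lambda_k)^2 c_k^\theta c_k^\varphi-\gamma(\lambda_k^{(n)})^2\hat c_k^\theta\hat c_k^\varphi\bigr)=E_1+E_2.
\end{equation*}
The elementary bounds $|c_k^\theta c_k^\varphi|\le\Vert\theta\Vert_0\Vert\varphi\Vert_0$ and $|c_k^\theta|\le\Vert\theta\Vert_0$ (and their hat analogues, which follow from Bessel's inequality) will be used throughout to extract the $\theta,\varphi$-dependence.

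For the truncation term, bounding $|c_k^\theta c_k^\varphi|\le\Vert\theta\Vert_0\Vert\varphi\Vert_0$ and applying \Cref{def:psd} and \Cref{prop:weyl} gives, exactly as in the first step of the proof of \Cref{th:err_fem},
\begin{equation*}
	|E_1|\lesssim\Vert\theta\Vert_0\Vert\varphi\Vert_0\sum_{k>n}\lambda_k^{-2\beta}\lesssim\Vert\theta\Vert_0\Vert\varphi\Vert_0\,n^{-(2\alpha\beta-1)},
\end{equation*}
which already accounts for the announced $n^{-(2\alpha\beta-1)}$ term.

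For the discretization term $E_2$, I would use the three-term telescoping
\begin{equation*}
	\gamma(\lambda_k)^2 c_k^\theta c_k^\varphi-\gamma(\lambda_k^{(n)})^2\hat c_k^\theta\hat c_k^\varphi=\bigl[\gamma(\lambda_k)^2-\gamma(\lambda_k^{(n)})^2\bigr]c_k^\theta c_k^\varphi+\gamma(\lambda_k^{(n)})^2(c_k^\theta-\hat c_k^\theta)c_k^\varphi+\gamma(\lambda_k^{(n)})^2\hat c_k^\theta(c_k^\varphi-\hat c_k^\varphi)
\end{equation*}
and treat each piece separately. The first piece is handled by writing $\gamma(\lambda_k)^2-\gamma(\lambda_k^{(n)})^2=(\gamma(\lambda_k)-\gamma(\lambda_k^{(n)}))(\gamma(\lambda_k)+\gamma(\lambda_k^{(n)}))$ and invoking the mean value theorem together with \Cref{def:psd} exactly as for the term $(\text{II})$ in the proof of \Cref{th:err_fem}; for $n$ large enough this produces the pointwise bound $|\gamma(\lambda_k)^2-\gamma(\lambda_k^{(n)})^2|\lesssim n^{-r}k^{\alpha(q-1-2\beta)}$, after which the summability exponent from \Cref{assum:Lh}, $\alpha(q-1)\le r$, and the Riemann-sum analysis of $n^{-r}\sum_{k=1}^n k^{r-2\alpha\beta}$ yield a contribution $\lesssim n^{-\min\lbrace r,2\alpha\beta-1\rbrace}$, with an extra $\log n$ at equality, and only in the regime $q>1$. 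The two remaining pieces are bounded via $|c_k^\theta-\hat c_k^\theta|\le\Vert\theta\Vert_0\Vert e_k-e_k^{(n)}\Vert_0$, the eigenfunction estimate $\Vert e_k-e_k^{(n)}\Vert_0\lesssim k^{\alpha q/2}n^{-s}$ from \Cref{assum:Lh}, and $\gamma(\lambda_k^{(n)})\lesssim k^{-\alpha\beta}$ (a consequence of the Weyl-type lower bound~\eqref{eq:weylb} together with \Cref{def:psd}); combined with $\alpha q\le 2s$, the resulting sum $n^{-s}\sum_{k=1}^n k^{s-2\alpha\beta}$ yields a contribution $\lesssim n^{-\min\lbrace s,2\alpha\beta-1\rbrace}$, again with a $\log n$ at equality.

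The main technical obstacle is the delicate case analysis on which of $s$, $r$ or $2\alpha\beta-1$ attains the minimum, which dictates where the logarithmic factors appear and which, for the first piece, must distinguish between $q=1$ (where the exponent $\alpha(q-1-2\beta)=-2\alpha\beta$ makes $\sum k^{-2\alpha\beta}$ converge unconditionally and no log arises) and $q>1$ (where the sharper inequality $\alpha(q-1)\le r$ must be used and produces a log factor at $r=2\alpha\beta-1$). This is precisely the split between the second and third cases of the theorem. Combining the three contributions and retaining only the dominant exponent then produces the announced bound, with the logarithmic factor appearing exactly in the announced critical cases.
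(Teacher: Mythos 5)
Your proposal is correct and follows essentially the same route as the paper's proof: a split into the tail $k>n$ (bounded via Cauchy--Schwarz, \Cref{def:psd} and \Cref{prop:weyl}) and a discretization part separated into an eigenvalue-error piece handled by the mean value theorem and an eigenfunction-error piece handled by \Cref{assum:Lh}, with the same case analysis on $q$ and on where the minimum is attained. The only cosmetic difference is your three-term telescoping of $\gamma(\lambda_k)^2c_k^\theta c_k^\varphi-\gamma(\lambda_k^{(n)})^2\hat c_k^\theta\hat c_k^\varphi$, which avoids the quadratic term $\Vert e_k-e_k^{(n)}\Vert_0^2$ that appears in the paper's version (where the intermediate field $\widetilde{\mathcal Z}^{(n)}$ is introduced and the eigenfunction products are expanded); both yield identical rates.
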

\vspace{-1em}

\begin{proof}
	The proof of this theorem is similar to the proof of \Cref{th:err_fem}, and is available in Section~\ref{proof:err_cov} of the \sm.\qed
\end{proof}

\subsection{Error analysis of the polynomial approximation}
\label{sec:pol_approx_err}

The Chebyshev polynomial approximation boils down to replacing the \psd $\gamma$ by the polynomial  $P_{\gamma,K}$ defined in \eqref{eq:p_gamma}, which approximates $\gamma$ over a segment $[0, \lambda_n^{(n)}]$ containing all the eigenvalues of the discretized operator $-\Delta_n$ (or equivalently the eigenvalues of the matrix $\bm S$). Hence, we have according to~\eqref{eq:def_Zn}
\begin{equation*}
\widehat{\mathcal{Z}}_{n,K}=\sum\limits_{k=1}^{n}W_{k} P_{\gamma,K}(\lambda_{k}^{(n)})e_{k}^{(n)} \veq
\end{equation*}
where $\lbrace W_k\rbrace_{1\le k\le n}$ are the same random weights as the ones defining $\mathcal{Z}_n$ in~\eqref{eq:def_Zn}.
The next result gives the root-mean-squared error between $\mathcal{Z}_n$ and its approximation $\widehat{\mathcal{Z}}_{n,K}$.

\begin{theorem}
		Let  Assumption \ref{assum:Lh} be satisfied, and let $\nu\in\N$ be defined as in \Cref{def:psd}, and let $\lambda_{\max}=\lambda_n^{(n)}$. Then, there exists $N_{\text{Cheb}}\in\N$ such that for any $n> N_{\text{Cheb}}$, the root-mean-squared error between the discretized field ${\mathcal{Z}}_n$ and its polynomial approximation $\widehat{\mathcal{Z}}_{n,K}$ of order $K>\nu$ is bounded by

	\begin{equation*}
	\Vert {\mathcal{Z}}_n-\widehat{\mathcal{Z}}_{n,K}\Vert_{L^2(\Omega;H)}
	\le 
	2(C_\lambda)^\nu(\pi\nu)^{-1}\text{TV}(\gamma^{(\nu)})\; n^{\alpha\nu+1/2}(K-\nu)^{-\nu}, 
	\end{equation*}
	where $\text{TV}(\gamma^{(\nu)})$ denotes the total variation over $[0, \lambda_{\max}]$ of the $\nu$-th derivative of~$\gamma$ and $\alpha>0$ and $C_\lambda >0$ are defined in \Cref{prop:weyl}.
	
	 If $\gamma$ satisfies that there exists some $\chi >0$ such that the map $z \in\mathbb{C} \mapsto \gamma(z)$ is holomorphic inside the ellipse $E_\chi\subset \mathbb{C}$ centered at $z=\lambda_{\max}/2$, with foci $z_1=0$ and $z_2=\lambda_{\max}$ and semi-major axis $a_\chi=\lambda_{\max}/2+\chi$, then, there exists $M_{\text{Cheb}}\in\N$ such that for any $n> M_{\text{Cheb}}$,
\begin{equation}
	\begin{aligned}
		\Vert {\mathcal{Z}}_n-\widehat{\mathcal{Z}}_{n,K}\Vert_{L^2(\Omega;H)}
		\le  (2C_\lambda\chi^{-1})^{1/2} \big(\sup_{z\in E_\chi}\left\vert {\gamma}\left(z\right)\right\vert\big)
		n^{(\alpha+1)/2}\exp(-(2C_\lambda\chi^{-1})^{-1/2}\; n^{-\alpha/2}K).
	\end{aligned}\label{eq:exprate}
\end{equation}
	\label{prop:conv_cheb_ap}
\end{theorem}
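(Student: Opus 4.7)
The strategy is to reduce the $L^2(\Omega;H)$ error to the uniform pointwise error of the polynomial approximation $P_{\gamma,K}$ on $[0,\lambda_{\max}]$, then to apply the classical Chebyshev truncation bounds recalled in \Cref{sec:cheb_pres}, and finally to translate the sup-norm estimate into an $n$-dependent bound via the control of $\lambda_{\max}=\lambda_n^{(n)}$ provided by \Cref{prop:weyl} and \Cref{assum:Lh}.

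First I would expand the difference $\mathcal{Z}_n - \widehat{\mathcal{Z}}_{n,K}$ in the orthonormal eigenbasis $\lbrace e_k^{(n)}\rbrace_{1\le k\le n}$ of $-\Delta_n$:
\begin{equation*}
\mathcal{Z}_n - \widehat{\mathcal{Z}}_{n,K} = \sum_{k=1}^n W_k\big(\gamma(\lambda_k^{(n)}) - P_{\gamma,K}(\lambda_k^{(n)})\big)\,e_k^{(n)}.
\end{equation*}
Orthonormality in $H$ together with the independence of the standard Gaussian weights $W_k$ gives, by Parseval,
\begin{equation*}
\Vert \mathcal{Z}_n - \widehat{\mathcal{Z}}_{n,K}\Vert_{L^2(\Omega;H)}^2 = \sum_{k=1}^n \big|\gamma(\lambda_k^{(n)}) - P_{\gamma,K}(\lambda_k^{(n)})\big|^2 \le n\,\Vert\gamma-P_{\gamma,K}\Vert_{L^\infty([0,\lambda_{\max}])}^2,
\end{equation*}
which, through the affine change of variables $\theta$ in \eqref{eq:chang_gamma}, equals $n\,\Vert\tilde\gamma-\mathcal{S}_K[\tilde\gamma]\Vert_{L^\infty([-1,1])}^2$.

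Second, I would apply the two standard uniform-convergence estimates for truncated Chebyshev series. In the BV-regularity case, the Jackson-type bound gives
\begin{equation*}
\Vert\tilde\gamma-\mathcal{S}_K[\tilde\gamma]\Vert_\infty \le \frac{2\,\mathrm{TV}(\tilde\gamma^{(\nu)})}{\pi\nu(K-\nu)^\nu},
\end{equation*}
and since $\tilde\gamma=\gamma\circ\theta$ with $\theta'\equiv\lambda_{\max}/2$, the chain rule yields $\mathrm{TV}(\tilde\gamma^{(\nu)})=(\lambda_{\max}/2)^\nu\,\mathrm{TV}(\gamma^{(\nu)})$. In the analytic case, the preimage $\theta^{-1}(E_\chi)$ is a Bernstein ellipse with foci $\pm 1$ and semi-major axis $a=1+2\chi/\lambda_{\max}$; setting $\rho=a+\sqrt{a^2-1}>1$, Bernstein's classical estimate then delivers
\begin{equation*}
\Vert\tilde\gamma-\mathcal{S}_K[\tilde\gamma]\Vert_\infty \le \frac{2\,\sup_{z\in E_\chi}|\gamma(z)|}{\rho-1}\,\rho^{-K}.
\end{equation*}

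Third, I would control $\lambda_{\max}=\lambda_n^{(n)}$ in terms of $n$. \Cref{prop:weyl} gives $\lambda_n\le C_\lambda n^\alpha$, and \Cref{assum:Lh} applied at $k=n$ yields $\lambda_n^{(n)}\le\lambda_n(1+C_1 C_\lambda^{q-1}n^{\alpha(q-1)-r})$, where $\alpha(q-1)\le r$ by \eqref{eq:exp}. Hence for $N_{\text{Cheb}}$ chosen large enough, $\lambda_{\max}\le 2C_\lambda n^\alpha$ for all $n>N_{\text{Cheb}}$, giving $(\lambda_{\max}/2)^\nu\le C_\lambda^\nu n^{\alpha\nu}$. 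Multiplying the BV Chebyshev bound by $\sqrt{n}$ then immediately produces the first estimate. For the exponential bound, the expansion $a^2-1=4(\chi/\lambda_{\max})(1+\chi/\lambda_{\max})$ yields $\rho-1\sim 2\sqrt{\chi/\lambda_{\max}}$ and $\log\rho\sim 2\sqrt{\chi/\lambda_{\max}}$ as $\lambda_{\max}\to\infty$; inserting $\lambda_{\max}\le 2C_\lambda n^\alpha$ and combining with the $\sqrt{n}$ prefactor then yields \eqref{eq:exprate} after picking $M_{\text{Cheb}}$ large enough to absorb the sub-leading corrections.

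The main obstacle will be pinning down the constants in the analytic estimate so that they match $(2C_\lambda\chi^{-1})^{1/2}$ and $(2C_\lambda\chi^{-1})^{-1/2}$ in \eqref{eq:exprate}: this requires two-sided asymptotic control of both $\log\rho$ and $\rho-1$ uniformly in $n\ge M_{\text{Cheb}}$, so that the higher-order terms in $\chi/\lambda_{\max}$ are cleanly absorbed into multiplicative constants rather than leaking into the rate. The BV case, by contrast, reduces to a routine substitution once the sup-norm bound is in hand.
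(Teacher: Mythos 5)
Your proposal is correct and follows essentially the same route as the paper's proof: reduce to $n\,\Vert\gamma-P_{\gamma,K}\Vert_\infty^2$ via Parseval, invoke the BV and Bernstein-ellipse Chebyshev estimates, and control $\lambda_{\max}=\lambda_n^{(n)}\le 2C_\lambda n^\alpha$ via Weyl's law and \Cref{assum:Lh}. The constant-tracking you flag as the main obstacle is handled in the paper exactly as you anticipate, with the elementary two-sided bounds $2\sqrt{x}<2\bigl(x+\sqrt{x(1+x)}\bigr)<2(1+\sqrt{2})\sqrt{x}$ and $\log(1+x)\ge x/2$.
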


\begin{proof}
	Let $\lambda_{\max}=\lambda_n^{(n)}$ and let $K\in\N$. We observe first that
	\begin{equation*}
	\begin{aligned}
	\Vert {\mathcal{Z}}_n-\widehat{\mathcal{Z}}_{n,K}\Vert_{L^2(\Omega;H)}^2
	=\e\left[\Vert {\mathcal{Z}}_n-\widehat{\mathcal{Z}}_{n,K}\Vert_{0}^2 \right] 
	&=\sum\limits_{k=1}^{n}(\gamma(\lambda_{k}^{(n)})-P_{\gamma,K}(\lambda_{k}^{(n)}) )^2
	\end{aligned}
	\end{equation*}
	using the definition of $\mathcal{Z}_n$ and~$\widehat{\mathcal{Z}}_{n,K}$.
	A rather crude upper bound of this quantity is given by
	\begin{equation*}
	\begin{aligned}
	\Vert {\mathcal{Z}}_n-\widehat{\mathcal{Z}}_{n,K}\Vert_{L^2(\Omega;H)}^2
	\le n \cdot \Vert \gamma - P_{\gamma,K}\Vert_{\infty} ^2 \sveq
	\end{aligned}
	\end{equation*}
	where
	\begin{equation*}
	\Vert \gamma - P_{\gamma,K}\Vert_{\infty}=\max\limits_{\lambda \in [0, \lambda_{\max}]} \vert \gamma(\lambda)-P_{\gamma,K}(\lambda) \vert
	=\max\limits_{t \in [-1, 1]} \vert \tilde\gamma(t)-\mathcal{S}_K[\tilde{\gamma}](t) \vert
	\end{equation*}
	with $\tilde{\gamma}$ defined in \eqref{eq:chang_gamma} and $\mathcal{S}_K[\tilde{\gamma}]$ denoting the Chebyshev series of $\tilde{\gamma}$ truncated at order~$K$. If we take $K>\nu$, the convergence properties of Chebyshev series (cf. \Cref{thm:cheb}) imply that
	\begin{equation*}
	\max\limits_{t \in [-1, 1]} \vert \tilde\gamma(t)-\mathcal{S}_K[\tilde{\gamma}](t) \vert
	\le 2(\pi\nu)^{-1}(K-\nu)^{-\nu}\text{TV}(\tilde\gamma^{(\nu)}) 
	= 2^{1-\nu}(\pi\nu)^{-1}(K-\nu)^{-\nu} \lambda_{\max}^\nu \text{TV}(\gamma^{(\nu)})
	\peq
	\end{equation*}

	Under \Cref{prop:weyl}, and Assumption~\ref{assum:Lh}, we have
	\begin{equation*}
		\lambda_{\max} \le \lambda_n (1+C_1\lambda_{n}^{q-1}n^{-r})\le C_\lambda n^{\alpha} (1+C_1C_\lambda n^{\alpha(q-1)-r})
	\end{equation*}
	 which yields $\lambda_{\max}=\mathcal{O}(n^{\alpha})$ (as $n\rightarrow +\infty$)  since $\alpha(q-1)\le r$. Hence, by defining $ N_{\text{Cheb}} = \min\lbrace n\in\N : C_1C_\lambda n^{\alpha(q-1)-r} <1\rbrace$, we obtain that for any $n> N_{\text{Cheb}}$, $\lambda_{\max}\le 2C_\lambda n^\alpha$, which in turn gives 
	 \begin{equation*}
	 	\Vert {\mathcal{Z}}_n-\widehat{\mathcal{Z}}_{n,K}\Vert_{L^2(\Omega;H)}
	 	\le n^{1/2} \cdot \Vert \gamma - P_{\gamma,K}\Vert_{\infty} 
	 	\le 2(C_\lambda)^\nu(\pi\nu)^{-1}\text{TV}(\gamma^{(\nu)})\; n^{\alpha\nu+1/2}(K-\nu)^{-\nu} .
	 \end{equation*}

	 For the second inequality, using a convergence result of Chebyshev series for analytic functions (cf.\ \Cref{thm:cheb}) and the same reasoning as above, we get for any $n, K\in\N$,
	 \begin{equation*}
	 	\begin{aligned}
	 		\Vert {\mathcal{Z}}_n-\widehat{\mathcal{Z}}_{n,K}\Vert_{L^2(\Omega;H)}
	 		\le   2\epsilon_\chi^{-1} \big(\sup_{z\in E_\chi}\left\vert {\gamma}\left(z\right)\right\vert\big)
	 		{n}^{1/2} 
	 		(1+\epsilon_\chi)^{-K}
	 		\veq 
	 	\end{aligned}
	 \end{equation*}
	 where  $\epsilon_\chi>0$ is given by $\epsilon_\chi=2\lambda_{\max}^{-1}\big( \chi
	 +\sqrt{\chi(\lambda_{\max}+\chi)}\big)=h(\chi\lambda_{\max}^{-1})$, and  for $x>0$, $h(x)=2(x+\sqrt{x(1+x)})$.  In particular, for $x\in(0,1)$, we have $ 2\sqrt{x}<h(x)< 2(1+\sqrt{2})\sqrt{x}$.

	 Following  \Cref{prop:weyl} and Assumption~\ref{assum:Lh}, $\lambda_{\max}=\lambda_{n}^{(n)}\ge l_\lambda \lambda_{n} \ge l_\lambda c_\lambda n^\alpha$, 	which gives in particular $\lambda_{\max}^{-1} \le (l_\lambda c_\lambda)^{-1} n^{-\alpha}$. 
	 Let $\widehat N_{\text{Cheb}} = \min\lbrace n\in\N : 4(1+\sqrt{2})^2\chi (l_\lambda c_\lambda)^{-1} n^{-\alpha} <1\rbrace$. Then, for any $n>  \widehat N_{\text{Cheb}}$, we have $\chi\lambda_{\max}^{-1} \in (0,1)$ and  
	 $$2\sqrt{\chi\lambda_{\max}^{-1}}<\epsilon_\chi < 2(1+\sqrt{2})\sqrt{\chi\lambda_{\max}^{-1}} \le 2(1+\sqrt{2})\sqrt{\chi} (l_\lambda c_\lambda)^{-1/2} n^{-\alpha/2}<1.$$
	 Taking $n > M_{\text{Cheb}}= \max\lbrace N_{\text{Cheb}}, \widehat N_{\text{Cheb}}\rbrace$, we obtain 
	 \begin{equation*}
	 	\sqrt{2\chi  (C_\lambda)^{-1}} n^{-\alpha/2} \le 2\sqrt{\chi\lambda_{\max}^{-1}} < \epsilon_\chi <1.
	 \end{equation*}
	 Using that $x\mapsto x^{-1}(1+x)^{-K}$ is decreasing for $x\in(0,1)$ and that $\log(1+x)\ge x/2$ yields for any $n> M_{\text{Cheb}}$,
	 \begin{equation*}
	 	\epsilon_\chi^{-1}(1+\epsilon_\chi)^{-K}  
	 	\le 	(2C_{\chi,\lambda} n^{-\alpha/2})^{-1}(1+2C_{\chi,\lambda} n^{-\alpha/2})^{-K} 
	 	\le (2C_{\chi,\lambda})^{-1}n^{\alpha/2}\exp(-C_{\chi,\lambda}K n^{-\alpha/2}),
	 \end{equation*}
	 where $C_{\chi,\lambda}=\sqrt{\chi(2C_\lambda)^{-1}}$. This in turn gives
	 \begin{equation*}
	 	\begin{aligned}
	 		\Vert {\mathcal{Z}}_n-\widehat{\mathcal{Z}}_{n,K}\Vert_{L^2(\Omega;H)}
	 		\le  (C_{\chi,\lambda})^{-1} \big(\sup_{z\in E_\chi}\left\vert {\gamma}\left(z\right)\right\vert\big)
	 		n^{(\alpha+1)/2}\exp(-C_{\chi,\lambda}K n^{-\alpha/2}).
	 	\end{aligned}
	 \end{equation*}	 
	 
	 \vspace*{-2\baselineskip}\qed
	
\end{proof}

For a fixed number of degrees of freedom~$n$ in \Cref{prop:conv_cheb_ap}, the approximation error $\Vert{\mathcal{Z}}_n-\widehat{\mathcal{Z}}_{n,K}\Vert_{L^2(\Omega;H)}$ converges to $0$ as the order of the polynomial approximation $K$ goes to infinity. Choosing $K$ as a function of $n$ that grows fast enough then allows to ensure the convergence of the approximation error as $n$ goes to infinity. For instance, let us assume that $\gamma$ is once differentiable with a derivative with bounded variations (i.e., $\nu=1$ in \Cref{def:psd}), and take for simplicity $\lambda_{\max}=\lambda_{n}^{(n)}$. Assuming that \Cref{assum:Lh} is satisfied, and following \Cref{prop:weyl} yields $\lambda_{\max}=\mathcal{O}(n^\alpha)$. Taking $K=K(n)=f(n)n^{\alpha+1/2}$, where $f$ denotes any function with $\lim_{n\rightarrow \infty} f(n) = + \infty$, ensures that the approximation error $\Vert {\mathcal{Z}}_n-\widehat{\mathcal{Z}}_{n,K}\Vert_{L^2(\Omega;H)}$ goes to~$0$ at least as fast as $f$ goes to infinity. In \Cref{sec:wnf}, we provide another example for the choice of~$K$ for an analytic \psd.

In practice though, the order~$K$ of the polynomial approximation is set differently, which allows to work with relatively small orders. It is suggested in \cite{pereira2019efficient} to set $K$ by controlling the deviation in distribution between the samples obtained with and without the polynomial approximation. We propose an approach based on the numerical properties of Chebyshev series, and show in the numerical experiments that it allows to limit the approximation order.

Observe that the random weights~\eqref{eq:approx_cheb_zn} defining the Chebyshev polynomial approximation~$\widehat{\mathcal{Z}}_{n,K}$ are obtained by summing the random vectors given by  
\begin{equation*}
c_k\; T_k((2/\lambda_{\max})\bm S-\bm I) \;\bm W , \quad 0\le k\le K,
\end{equation*}
where  $\bm W \sim \mathcal{N}(\bm 0, \bm I)$ and $c_0, \dots, c_K$ are the Chebyshev series coefficients of the function~$\tilde{\gamma}$ defined in~\eqref{eq:chang_gamma}.
The Chebyshev polynomials $\lbrace T_k\rbrace_{k\in\N}$ have values in $[-1,1]$, meaning in particular that the eigenvalues of the matrices $T_k((2/\lambda_{\max})\bm S-\bm I)$ lie in the same interval. Consequently, we have for any $k\in\bi 0, K\ei$,  
\begin{equation*}
\e\big[\Vert  c_{k}T_k((2/\lambda_{\max})\bm S-\bm I)\bm W \Vert_{2}^2\big]^{1/2} 
\le \vert c_{k}\vert \e\big[\Vert \bm W\Vert_2^2\big]^{1/2} \le  \vert c_k \vert n^{1/2}\peq
\end{equation*}
Let $c_{\max}=\max\lbrace \vert c_k\vert : 0\le k\le K\rbrace$.
Since the coefficients $c_k$ converge to $0$ at least linearly for \psds (cf. \Cref{thm:cheb}), the order $K$ can be chosen to ensure that the ratio $c_K/c_{\max} \ll 1$ or that the bound $\vert c_K \vert n^{1/2} \ll 1$. Then, in practice, adding more terms to the expansion only results in negligible perturbations of the solution.

\section{Complexity analysis}\label{sec:compl}

Recall that the Galerkin--Chebyshev approximation $\widehat{\mathcal{Z}}_{n,K}$ of discretization order $n\in\N$ and polynomial order $K\in\N$ of a random field $\mathcal{Z}$ is  defined as
\begin{equation}
	\widehat{\mathcal{Z}}_{n,K}=\sum_{k=1}^n \widehat{Z}_k \psi_k \veq
	\label{eq:fem_discr_approx}
\end{equation}
where $\widehat{\bm Z}=(\widehat Z_1, \dots, \widehat Z_n)^T$ is a  Gaussian random vector with mean $\bm 0$ and covariance matrix
\begin{equation*}
	\var[\widehat{\bm Z}]=\big(\sqrt{\bm C}\big)^{-T}P_{\gamma,K}^2(\bm S)\big(\sqrt{\bm C}\big)^{-1} \veq
\end{equation*}
which can be computed by solving the linear system
\begin{equation}
	\big(\sqrt{\bm C}\big)^{T} \widehat{\bm Z} =P_{\gamma,K}(\bm S)\bm W
	\label{eq:sample}
\end{equation}
for $\bm W \sim \mathcal{N}(\bm 0, \bm I)$.
We now discuss the computational and storage cost of sampling a GRF using this approximation. In a first part, we derive these costs for the the case where nothing further is assumed about the basis $\lbrace\psi_k\rbrace_{1\le k\le n}$ used to discretize the field. In a second part, we then show how some particular choices of this basis can help to drastically improve these costs. The computational and storage costs obtained in each case are summarized in \Cref{tab:cost}. Each time, we distinguish offline computational costs, linked to operations that can be reused to generate more samples, and online computational costs steps that are specific to the computation of a given sample. In particular, we observe that the spectral method seems to perform best, but as we will see this method is rarely applicable, and we will in practice prefer the method based on linear finite elements with a mass lumping approximation which still offers overall computational costs that grow linearly with the product $Kn$ (see \Cref{sec:trunc,sec:lfem} for more details).

\begin{table}
	\centering
	\begin{tabular}{ |C{8em}|c|c|c| } 
		\hline
		 & Offline computational costs & Online computational costs & Storage costs \\ 
		 \hline
		General case & $\mathcal{O}(n^3 + K\log K)$ & $\mathcal{O}(Kn^2)$ & $\mathcal{O}(n^2+K)$ \\ 
		\hline
		 Spectral method & $0$ & $\mathcal{O}(n)$ & $\mathcal{O}(n)$  \\ 
		 \hline
		 Linear finite elements + Cholesky & $\eta_{\text{Chol}}(\bm C)+\mathcal{O}(\mu n+ K\log K)$ & $\mathcal{O}(K\mu n)$ & $\mathcal{O}(\mu n+K)$ \\ 
		 \hline
		 Linear finite elements + Mass Lumping & $\mathcal{O}( K\log K)$ & $\mathcal{O}(K\mu n)$ & $\mathcal{O}(\mu n+K)$\\
		\hline
	\end{tabular}
\caption{Comparison of computational and storage costs for computing a GRF sample from a Galerkin--Chebyshev approximation of discretization order $n\in\N$ and polynomial order $K\in\N$,  for various choices of discretization basis.  The parameter $\mu$ is an upper bound for the mean number of nonzero entries in $\sqrt{\bm C}$ and $\bm R$, and $\eta_{\text{Chol}}(\bm C)$ the computational cost of a Cholesky factorization of~$\bm C$.}
\label{tab:cost}
\end{table}

\subsection{Efficient sampling: general case}\label{sec:samp_gen}

Generating samples of the weights $\widehat{\bm Z}$ in~\eqref{eq:sample} requires two steps:
\begin{itemize}
	\item first, one computes the vector $\widehat{\bm X}=P_{\gamma,K}(\bm S)\bm W$ for some $\bm W \sim \mathcal{N}(\bm 0, \bm I)$. Due to the fact that $P_{\gamma,K}$ is a polynomial, this step can be implemented as an iterative program involving at each step only one matrix-vector product between $\bm S$ and a vector;
	\item then, one solves the linear system $\big(\sqrt{\bm C}\big)^{T} \widehat{\bm Z} =\widehat{\bm X}$.
\end{itemize}
In order to execute these two steps, one only needs to implement the following two sub-algorithms:
\begin{itemize}
	\item an algorithm $\bm \Pi_{\bm S}$ taking as input a vector $\bm x$ and returning the product $\bm \Pi_{\bm S}(\bm x)=\bm S \bm x$;
	\item an algorithm $\bm{\Pi}_{(\sqrt{\bm C})^{-T}}$ taking as input a vector $\bm x$ and returning the solution $\bm y=\bm \Pi_{(\sqrt{\bm C})^{-T}}(\bm x)$ to the linear system
	$\big(\sqrt{\bm C}\big)^{T} \bm y = \bm x \peq$
\end{itemize}
We present in Algorithm 1 of the · the overall algorithm leading to sampling the weights of the decomposition defined in~\eqref{eq:fem_discr_approx}  using this approach.

Following the definition of $\bm S$ in \Cref{eq:def_S}, $\bm\Pi_{\bm S}$ does not require the matrix $\bm S$ to be computed explicitly and stored: a product by $\bm S$ boils down to solving a first linear system defined by $(\sqrt{\bm C})^{T}$, multiplying the obtained solution by $\bm R$ and then solving a second linear system defined by $\sqrt{\bm C}$.
Hence, both $\bm{\Pi}_{(\sqrt{\bm C})^{-T}}$ and $\bm\Pi_{\bm S}$ rely on solving linear systems involving a square-root of the mass matrix $\bm C$ (or its transpose). The cost associated with calls to $\bm{\Pi}_{(\sqrt{\bm C})^{-T}}$ and $\bm\Pi_{\bm S}$ should be kept minimal in order to reduce the overall  computational complexity of the sampling algorithm. 

Since the choice of this square-root is free, one could take it as the Cholesky factorization of $\bm C$ satisfying $\sqrt{\bm C} = \bm L$
for some lower-triangular matrix~$\bm L$. Solving a linear system involving $\bm L$ or $\bm L^T$ can be done at roughly the cost of a matrix-vector product using forward or backward substitution. The algorithms $\bm\Pi_{\bm S}$  and $\bm{\Pi}_{(\sqrt{\bm C})^{-T}}$ resulting from this choice are presented in Algorithms 2 and 3 of the \sm. 
Regarding the computational complexity of these algorithms, since solving a linear system using forward or backward substitution can be done with a computational cost of the same order as a matrix-vector product (namely $\mathcal{O}(n^2)$ operations), each call to $\bm\Pi_{\bm S}$ or $\bm{\Pi}_{(\sqrt{\bm C})^{-T}}$  amounts to $\mathcal{O}(n^2)$ operations. This means that, if implementations of these two algorithms are available, the cost of computing the weights $\widehat{\bm Z}$ in~\eqref{eq:sample} is of order $\mathcal{O}(Kn^2)$, where $K$ corresponds to the order of the polynomial approximation. 

Finally, recall that one needs an upper bound $\lambda_{\max}$ of the largest eigenvalue of~$\bm S$ in order to define the polynomial $P_{\gamma, K}$. This upper bound can be obtained with a limited computational cost (namely $\mathcal{O}(n^2)$ operations) by combining the Gershgorin circle theorem~\cite{gerschgorin1931uber} and a power iteration scheme (as described in Section~\ref{sec:upper_bound} of the \sm).

Overall, the computational cost of sampling the weights of the Galerkin--Chebyshev approximation $\widehat{\mathcal{Z}}_{n,K}$ in~\eqref{eq:fem_discr_approx} can be summarized as follows. We can distinguish between offline and online steps. 
The offline steps are as follows.  First, there is the computation of the coefficients of the Chebyshev approximation $P_{\gamma, K}$, which requires $\mathcal{O}(K\log K)$ operations as mentioned in the previous subsection. Then, there is  the Cholesky factorization of $\bm C$, which requires $\mathcal{O}(n^3)$ operations \citep[Chapter 2]{press2007numerical}. And finally, there is the computation of the upper bound of the eigenvalues of $\bm S$, which requires $\mathcal{O}(n^2)$ operations (dominated by the use of the power iteration scheme). 
The online step is the computation of the weights according to~\eqref{eq:sample}, which requires $\mathcal{O}(Kn^2)$ operations. Storage-wise, this workflow only requires enough space to store the Cholesky factorization of the mass matrix~$\bm C$, the stiffness matrix $\bm R$, the $K+1$ coefficients of the Chebyshev polynomial approximation, and a few vectors of size $n$.  In conclusion, the offline costs are of order $\mathcal{O}(K\log K+n^3)$, the online costs are of order $\mathcal{O}(Kn^2)$, and the storage needs are of order $\mathcal{O}(n^2+K)$. 
 As we will see in the next section, both computational and storage costs can be reduced for typical choices of the discretization space $V_n$.

\subsection{Efficient sampling: Particular cases}\label{sec:part_cases}

The choice of the space $V_n$ used to discretize the random fields impacts heavily the mass and stiffness matrices, and can in relevant cases be leveraged to speed up the sampling process. We provide here two examples, which will be considered later on in the numerical experiments.

\subsubsection{Spectral approximation} 
\label{sec:trunc}

If we assume that the eigenvalues of the Laplace--Beltrami operator are known, we can use spectral methods, which correspond to the case where $V_n$ is  the set of eigenfunctions associated with the first $n$ eigenvalues of the Laplace--Beltrami operator.
Then, since the eigenfunctions are orthonormal, the mass matrix $\bm C$ is equal to the identity matrix. Besides, using Green's theorem, we have that the stiffness matrix $\bm R$ is also diagonal, with entries equal to the operator eigenvalues. This gives that $\bm S=\bm R$ is diagonal.

Thus, sampling the weights of $\widehat{\mathcal{Z}}_{n,K}$  can be done without requiring any Cholesky factorization: calls to $\bm\Pi_{\bm S}$ are replaced by multiplication by the diagonal matrix $\bm R$ containing the eigenvalues of the operator, calls to $\bm\Pi_{(\sqrt{\bm C})^{-T}}$ are replaced by products with an identity matrix, and the upper bound $\lambda_{\max}$ is replaced by the maximal entry of $\bm R$. In particular, the offline costs are reduced to the computation of the coefficients of $P_{\gamma, K}$, and the online costs are reduced to $\mathcal{O}(n)$. As for the storage needs, they would now be reduced to $\mathcal{O}(n)$ (since both $\bm C$ and $\bm R$ are diagonal).

In practice though, the Chebyshev polynomial approximation is not necessary. One can directly use \Cref{th:cov_weights} to compute samples of $\mathcal{Z}_n$ (and therefore there is no need to approximate it by $\widehat{\mathcal{Z}}_{n,K}$): $\bm S$ being now diagonal, the matrix $\gamma^2(\bm S)$ is the diagonal matrix obtained by directly applying $\gamma^2$ to the diagonal entries of $\bm S$.  Samples of $\mathcal{Z}_n$ are then obtained by taking  the weights $\bm Z$  as a sequence of independent Gaussian random variables with variances given by the diagonal entries of $\gamma^2(\bm S)$ (since $\bm C$ is the identity matrix).
 In conclusion, no offline costs are needed for the spectral method, the online costs are of order $\mathcal{O}(n)$, and the storage needs are of order $\mathcal{O}(n)$. 
 
These computational costs might seem ideal, but one should remember that the spectral method is only applicable when the eigenfunctions and eigenvalues of the  Laplace--Beltrami operator are known. This is the case for instance when working on rectangular Euclidean domains, for which the eigenfunctions correspond to the Fourier basis, and we retrieve the classical spectral methods, or for the sphere, for which the eigenfunctions are the spherical harmonics, see \Cref{sec:num} for more details). For other choices of compact Riemannian manifolds, these are unknown, which is why we propose the next method relying on the finite element method.

\subsubsection{Linear finite element spaces}
\label{sec:lfem}

Consider the case where $V_n$ is taken to be a finite element space of (piecewise) linear functions associated with a simplicial mesh of the manifold $\mathcal{M}$. In this case, the basis functions composing $V_n$ have a support limited to a few elements of the mesh, and the matrices $\bm C$ and $\bm R$ are therefore sparse. Besides, for uniform meshes, one can bound the number of nonzero entries in each row of these matrices. Such sparsity can be leveraged to reduce the cost associated with sample generation. 

The cost $\eta_{\text{Chol}}(\bm C)$ of the Cholesky factorization now depends on the number of nonzero entries of $\bm C$, and adequate permutations can be found to ensure that the factors are themselves sparse. This cost is of course upper-bounded by the cost associated with the Cholesky factorization of a dense matrix, i.e., $\mathcal{O}(n^3)$, but in practice the sparsity of the matrix is leveraged to achieve a lower computational cost.
 Consequently, the costs associated with calling $\bm\Pi_{\bm S}$ or $\bm\Pi_{(\sqrt{\bm C})^{-T}}$ are reduced to an order $\mathcal{O}(\mu n)$, where $\mu \ll n$ denotes an upper bound for the mean number of nonzero entries in $\sqrt{\bm C}$ and $\bm R$. This means in particular that the computational cost of computing the weights through~\eqref{eq:sample} drops to $\mathcal{O}(K\mu n)$ operations.  Similarly, using the same approach as the one described in \Cref{sec:samp_gen}, the upper bound $\lambda_{\max}$ can be computed in $\mathcal{O}(\mu n)$ operations. In conclusion, the offline costs are of order $\eta_{\text{Chol}}(\bm C)+\mathcal{O}(\mu n +K\log K)$, the online costs are of order $\mathcal{O}(K\mu n)$, and the storage needs are of order $\mathcal{O}(\mu n+K)$.

In practice, an additional approximation can be made to further reduce the computational cost of the algorithm. As advocated by \citet{lindgren2011explicit}, the mass matrix $\bm C$ can be replaced by a diagonal approximation $\widehat{\bm C}$ whose entries are given by
	\begin{equation*}
		\widehat{C}_{ii}= \left(\psi_i, 1\right)_0, \quad i\in \bi 1, n\ei. 
	\end{equation*}
	This approach results in a Markovian approximation of the random field, and is inspired from the lumped mass approximation proposed by \citet{chen1985lumped} for parabolic PDEs. On Euclidean domains, this approach introduces an error in the covariance of the resulting field of order $\mathcal{O}(h^2)$ where $h$ is the mesh size, which, for a uniform mesh, is linked to the dimension $n$ of the finite element space as $n=\mathcal{O}(h^{-d})$. We show in the numerical experiments in \Cref{sec:num} that this additional error does not affect the theoretical convergence rates derived in \Cref{sec:err}.

Following the lumped mass approach, the square-root $\sqrt{\bm C}$ currently computed as a Cholesky factor, is replaced by the square-root $\widehat{\bm C}^{1/2}$ of $\widehat{\bm C}$, which is the diagonal matrix obtained by taking the square-root of the entries of $\widehat{\bm C}$. This completely eliminates the need for a Cholesky factorization. Also the linear system previously solved by substitution can be trivially solved in linear time since the matrix is diagonal. As for the upper bound $\lambda_{\max}$ it can be computed directly without requiring a power iteration method.
Then, the offline costs of our approach drop to $\mathcal{O}(K\log K)$ and the online costs are of order $\mathcal{O}(K\mu n)$.  As for the storage needs, they are reduced to $\mathcal{O}(\mu n)$ (since both $\bm C$ and $\bm R$ are sparse).  These costs are drastically reduced compared to the costs associated with the naive approach presented at the beginning of \Cref{sec:motiv_cheb}, which consisted of a storage need of $\mathcal{O}(n^2)$ and a computational complexity of $\mathcal{O}(n^3)$ operations. The storage costs now grow linearly with $n$, and the computational costs grow linearly with $K$ and $n$, hence rendering the algorithm much more scalable.

\subsection{Application: Simulation of Whittle--Matérn fields}\label{sec:wnf}

To conclude this section, we provide an application of the convergence results in \Cref{sec:err} and of the computational complexities derived in this section to the approximation of Whittle--Matérn random fields, i.e., fields with a \psd given by~\eqref{eq:gamma_beta}).

\begin{corol}
	Let  \Cref{assum:Lh} be satisfied,  and let $\gamma$
	be given by~\eqref{eq:gamma_beta}.
	Then, the approximation error of the random field $\mathcal{Z}$ by its Galerkin--Chebyshev polynomial approximation $\widehat{\mathcal{Z}}_{n,K}$ of order $K\in\N$, satisfies
	\begin{equation}
		\Vert \mathcal{Z} - \widehat{\mathcal{Z}}_{n,K} \Vert_{L^2(\Omega; H)} 
		\le  C_{\text{Galer}}\;n^{-\rho}+ C_{\kappa,\lambda}(2^{-1}\kappa^{2})^{-\beta} \;
		n^{(\alpha+1)/2}\exp(-(C_{\kappa,\lambda})^{-1}\; n^{-\alpha/2}K)
		\label{eq:err_matern}
	\end{equation}
	where $C_{\text{Galer}}$ is a constant independent of $n$ and $K$, $\rho=\min\left\lbrace s ;\; r;\; (\alpha\beta-1/2)\right\rbrace>0$ and  $C_{\kappa,\lambda}=2C_\lambda^{1/2}\kappa^{-1}$, $\alpha>0$ and $C_\lambda>0$ are defined in \Cref{prop:weyl}, $\kappa>0$ and  $\beta>0$ are as in~\eqref{eq:gamma_beta}, and $r>0$ and $s>0$ are given in \Cref{assum:Lh}.
	
	In particular, 
	there exist $\epsilon_0, C_1, C_2>0$ (depending only on $\gamma$, $C_{\text{Galer}}$ and the constants defined in  \Cref{prop:weyl} and Assumption \ref{assum:Lh}) such that for any $\epsilon\in (0,\epsilon_0)$, taking $n=(C_1)^{1/\rho}\;\epsilon^{-1/\rho}$ and $K=\big\lceil C_{2}\;\epsilon^{-\alpha/2\rho}\vert \log \epsilon\vert\big\rceil$ yields
	\begin{equation*}
		\Vert \mathcal{Z} - \widehat{\mathcal{Z}}_{n,K} \Vert_{L^2(\Omega; H)}
		\le  \epsilon.
	\end{equation*}
	\label{corol:conv_cheb_ap}
\end{corol}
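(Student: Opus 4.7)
The plan is to specialize \Cref{th:err_all_ana} to the concrete \psd $\gamma(\lambda)=(\kappa^2+\lambda)^{-\beta}$ by selecting an optimal $\chi$, and then to balance the two terms of the resulting error estimate against $\epsilon$.

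\emph{Step 1: Identifying an admissible ellipse.} The map $z\mapsto(\kappa^2+z)^{-\beta}$ is holomorphic on $\mathbb{C}\setminus\{-\kappa^2\}$, so $\gamma$ extends holomorphically inside $E_\chi$ if and only if $-\kappa^2\notin E_\chi$. Since $E_\chi$ is the set of $z\in\mathbb{C}$ with $|z|+|z-\lambda_{\max}|\le 2a_\chi=\lambda_{\max}+2\chi$, and $|-\kappa^2|+|-\kappa^2-\lambda_{\max}|=2\kappa^2+\lambda_{\max}$, the exclusion $-\kappa^2\notin E_\chi$ is equivalent to $\chi<\kappa^2$. I would fix $\chi=\kappa^2/2$.

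\emph{Step 2: Computing the supremum.} Since $|\gamma(z)|=|\kappa^2+z|^{-\beta}$, one has $\sup_{z\in E_\chi}|\gamma(z)|=(\min_{z\in E_\chi}|z+\kappa^2|)^{-\beta}$. The pole $-\kappa^2$ lies on the real axis outside $E_\chi$, to the left of the leftmost vertex $-\chi$; by convexity of $E_\chi$ and symmetry about the real axis, the nearest point of $E_\chi$ to $-\kappa^2$ is exactly this leftmost vertex. Hence $\min_{z\in E_\chi}|z+\kappa^2|=\kappa^2-\chi=\kappa^2/2$, giving $\sup_{z\in E_\chi}|\gamma(z)|=(2^{-1}\kappa^2)^{-\beta}$.

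\emph{Step 3: Applying \Cref{th:err_all_ana} and deriving the $\epsilon$-complexity.} Substituting $\chi=\kappa^2/2$ and the above supremum into the constants $\tilde C_{\text{pol}}=(2C_\lambda\chi^{-1})^{1/2}\sup_{E_\chi}|\gamma|$ and $\widehat C_{\text{pol}}=(2C_\lambda\chi^{-1})^{-1/2}$ produced by the proof of \Cref{prop:conv_cheb_ap} yields, after a direct algebraic check, $\tilde C_{\text{pol}}=C_{\kappa,\lambda}(2^{-1}\kappa^2)^{-\beta}$ and $\widehat C_{\text{pol}}=C_{\kappa,\lambda}^{-1}$ with $C_{\kappa,\lambda}=2C_\lambda^{1/2}\kappa^{-1}$, which is exactly \eqref{eq:err_matern}. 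For the explicit $\epsilon$-bound, I would set $C_1=2C_{\text{Galer}}$ and $n=C_1^{1/\rho}\epsilon^{-1/\rho}$, so that the Galerkin term is $\le\epsilon/2$; forcing the Chebyshev term to be $\le\epsilon/2$ reduces, after taking logarithms, to
\[K\ge C_{\kappa,\lambda}\,n^{\alpha/2}\Bigl[\log\bigl(2C_{\kappa,\lambda}(2^{-1}\kappa^2)^{-\beta}\epsilon^{-1}\bigr)+\tfrac{\alpha+1}{2}\log n\Bigr].\]
Plugging in $n=C_1^{1/\rho}\epsilon^{-1/\rho}$ and choosing $\epsilon_0<1$ small enough that $|\log\epsilon|$ dominates all additive constants gives the announced bound $K\ge C_2\,\epsilon^{-\alpha/(2\rho)}|\log\epsilon|$ for an explicit $C_2$ depending only on $\gamma$, $C_{\text{Galer}}$ and the constants of \Cref{prop:weyl} and \Cref{assum:Lh}.

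The only genuinely non-routine step is the geometric computation in Step~2; the rest is substitution and careful balancing of exponents of $\epsilon$, the main bookkeeping effort being to translate the $\log n$ term into the factor $|\log\epsilon|$ once $n$ is itself a negative power of~$\epsilon$.
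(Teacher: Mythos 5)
Your proposal is correct and follows essentially the same route as the paper: specialize \Cref{th:err_all_ana} with $\chi=\kappa^2/2$ (your Step~2 just makes explicit the geometric bound $\sup_{E_\chi}|\gamma|\le(2^{-1}\kappa^2)^{-\beta}$ that the paper asserts), and then balance the Galerkin and Chebyshev terms against $\epsilon$. The only cosmetic difference is in the bookkeeping of the second part: you enforce each term $\le\epsilon/2$ up front via $C_1=2C_{\text{Galer}}$, whereas the paper first obtains a bound of the form $(1+C_{\text{Cheb}})\epsilon$ and then rescales $\epsilon$ at the end; both yield the stated conclusion, provided (as you implicitly arrange by shrinking $\epsilon_0$) that $n$ exceeds the threshold $M_{\text{Cheb}}$ of \Cref{prop:conv_cheb_ap}.
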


\begin{proof}
	To ease the reasoning, let us consider once again that the upper-bound $\lambda_{\max}$ corresponds exactly to the maximal eigenvalue of $\bm S$. The inequality~\eqref{eq:err_matern} follows directly from~\Cref{th:err_all_ana}, after noting that $z\in\mathbb{C}\mapsto\gamma(z)$ is holomorphic in the ellipse centered at $z=\lambda_{\max}/2$, with foci $z_1=0$ and $z_2=\lambda_{\max}$, and semi-major axis $a=\lambda_{\max}/2+\kappa^2/2$ (i.e., $\chi=\kappa^2/2$ in \Cref{th:err_all_ana}), and that $\vert \gamma\vert$ can be bounded in this ellipse by $(\kappa^{2}/2)^{-\beta}$.

	 The error $\Vert \mathcal{Z} - \widehat{\mathcal{Z}}_{n,K} \Vert_{L^2(\Omega; H)}$ satisfies the inequality
	\begin{equation*}
		\Vert \mathcal{Z} - \widehat{\mathcal{Z}}_{n,K} \Vert_{L^2(\Omega; H)} \le E_{\text{Galer}} + E_{\text{Cheb}},
	\end{equation*}
	where $E_{\text{Galer}}=C_{\text{Galer}}\; n^{-\rho}$ denotes the contribution to the error estimate due to the Galerkin approximation, and $E_{\text{Cheb}}$ the contribution due to the Chebyshev approximation, i.e.,
	\begin{equation*}
		E_{\text{Cheb}}= C_{\kappa,\lambda}(2^{-1}\kappa^{2})^{-\beta} \;
		n^{(\alpha+1)/2}\exp(-(C_{\kappa,\lambda})^{-1}\; n^{-\alpha/2}K).
	\end{equation*}
	Let $\epsilon \in(0,\epsilon_{\max})$ where $\epsilon_{\max} =\sup\lbrace \epsilon \in (0,1): (C_{\text{Galer}})^{1/\rho}\;\epsilon^{-1/\rho} > M_{\text{Cheb}} \rbrace$ and $M_{\text{Cheb}}$ is defined in \Cref{prop:conv_cheb_ap}. Let $n=\lceil(C_{\text{Galer}})^{1/\rho}\;\epsilon^{-1/\rho}\rceil$.
Then, $n > M_{\text{Cheb}}$ and $E_{\text{Galer}}=\epsilon$. Let $r_{\rho,\alpha}=(1+(2\rho)^{-1}(\alpha+1))$, $C_{\kappa,\lambda,\text{Galer}}=C_{\kappa,\lambda} (C_{\text{Galer}})^{\alpha/2\rho}r_{\rho,\alpha}$, and take $K=\big\lceil C_{\kappa,\lambda,\text{Galer}}\;\epsilon^{-\alpha/2\rho}\vert \log \epsilon\vert\big\rceil$. Thus,
	\begin{equation*}
		\begin{aligned}
					E_{\text{Cheb}} 
			&\le C_{\text{Cheb}}\;\epsilon^{-(\alpha+1)/2\rho}\exp\big(-r_{\rho,\alpha}\vert \log \epsilon\vert\big) = C_{\text{Cheb}}\;\epsilon^{r_{\rho,\alpha}-(\alpha+1)/2\rho}=C_{\text{Cheb}}\;\epsilon\\
		\end{aligned}
	\end{equation*}
	 where $C_{\text{Cheb}}=C_{\kappa,\lambda}(2^{-1}\kappa^{2})^{-\beta} \;
	 (C_{\text{Galer}})^{(\alpha+1)/2\rho}$ and we used the fact the $\epsilon\le1$.
	 
	 	In conclusion, let $\tilde{\epsilon} \in(0, \epsilon_0)$ where $\epsilon_0=(1+C_{\text{Cheb}})\epsilon_{\max}$ and let $\epsilon=\tilde{\epsilon}(1+C_{\text{Cheb}})^{-1}$.
	 Then,  $\epsilon \in(0,\epsilon_{\max})$, and when taking $\lceil(C_{\text{Galer}})^{1/\rho}\;\epsilon^{-1/\rho}\rceil$ and $K=\big\lceil C_{\kappa,\lambda,\text{Galer}}\;\epsilon^{-\alpha/2\rho}\vert \log \epsilon\vert\big\rceil$, we end up with an error $\Vert \mathcal{Z} - \widehat{\mathcal{Z}}_{n,K} \Vert_{L^2(\Omega; H)}
	 \le  (1+ C_{\text{Cheb}})\epsilon = \tilde{\epsilon}$. \qed
\end{proof}

As a consequence, we can derive the computational cost required to sample a GRF with root-mean-squared error $\Vert \mathcal{Z} - \widehat{\mathcal{Z}}_{n,K} \Vert_{L^2(\Omega; H)}$ smaller than  some small $\epsilon$ by taking $n=(C_1)^{1/\rho}\;\epsilon^{-1/\rho}$ and $K=\big\lceil C_{2}\;\epsilon^{-\alpha/2\rho}\vert \log \epsilon\vert\big\rceil$ in the estimates in \Cref{tab:cost}. We end up with the bounds in \Cref{tab:cost_matern}. We also provide the computational cost associated with the choice of a linear finite element and mass lumping approximation. This method introduces an additional error term due to the mass lumping approximation but in practice does not seem to affect the theoretical convergence rates of the root-mean-squared error, which allows us to think that we can still carry out the analysis leading to \Cref{corol:conv_cheb_ap} (and therefore to the estimates in \Cref{tab:cost_matern}) in this case).  We finally observe that a Galerkin--Chebyshev approximation of a Whittle--Matérn field with a root-mean-squared error bounded by $\epsilon >0$ can be asymptotically obtained with a computational cost $\mathcal{O}(\mu\epsilon^{-(\alpha/2+1)/\rho}\vert\log\epsilon\vert)$ using linear finite elements with a mass lumping approximation.

\begin{table}
	\centering
	\begin{tabular}{ |C{8em}|c|c|c| } 
		\hline
		& Offline computational costs & Online computational costs & Storage costs \\ 
		\hline
		General case & $\mathcal{O}(\epsilon^{-3/\rho} )$ & $\mathcal{O}(\epsilon^{-(\alpha/2+2)/\rho}\vert\log\epsilon\vert)$ & $\mathcal{O}(\epsilon^{-2/\rho}  )$ \\ 
		\hline
		Spectral method & $0$ & $\mathcal{O}(\epsilon^{-1/\rho})$ & $\mathcal{O}(\epsilon^{-1/\rho})$  \\ 
		\hline
		Linear finite elements + Cholesky & $\eta_{\text{Chol}}(\bm C)+\mathcal{O}(\mu\epsilon^{-1/\rho} )$ & $\mathcal{O}(\mu\epsilon^{-(\alpha/2+1)/\rho}\vert\log\epsilon\vert)$ & $\mathcal{O}(\mu \epsilon^{-1/\rho})$ \\ 
		\hline
		Linear finite elements + Mass Lumping & $\mathcal{O}(\epsilon^{-\alpha/2\rho}\vert\log(\epsilon)\vert^2 )$ & $\mathcal{O}(\mu\epsilon^{-(\alpha/2+1)/\rho}\vert\log\epsilon\vert)$ & $\mathcal{O}(\mu \epsilon^{-1/\rho})$\\
		\hline
	\end{tabular}
\caption{Comparison of computational and storage costs for computing a Galerkin--Chebyshev approximation of a Whittle--Matérn field with a root-mean-squared error bounded by $\epsilon >0$. The parameter $\mu$ is an upper bound for the mean number of nonzero entries in $\sqrt{\bm C}$ and $\bm R$ and $\eta_{\text{Chol}}(\bm C)$ the computational cost of a Cholesky factorization of $\bm C$.}
	\label{tab:cost_matern}
\end{table}

\section{Numerical experiments}\label{sec:num}

In this section we confirm the convergence estimates derived in \Cref{sec:err} using numerical experiments. In a first subsection, we restrict ourselves to the specific case where the Riemannian manifold of interest $(\mathcal{M}, g)$ is the $2$-sphere endowed with its canonical metric, as in this case the eigenvalues and eigenvectors of the Laplace--Beltrami are known, and hence the exact solution can be computed and compared to the various approximations introduced in this work. In a second subsection, we investigate the case where the Riemannian manifold of interest is a hyperboloid, for which, even though the  the eigenvalues and eigenvectors of the Laplace--Beltrami are not known, we are still able retrieve the error estimate for the covariance.

\subsection{Numerical experiments on the sphere}

Recall that the Laplace--Beltrami operator $-\Delta_{\mathcal{M}}$ on the $2$-sphere has eigenvalues $\lambda_{l,m}$ given by $\lambda_{l,m}=l(l+1)$ for $l\in\mathbb{N}$, $m\in\bi -l, l\ei$,
with associated eigenfunctions given by the (real) spherical harmonics $Y_{l,m}$ defined in spherical coordinates $\theta\in [0,\pi], \phi\in[0,2\pi)$ by the expression
\begin{equation*}
Y_{l,m}(\theta, \phi)=(-1)^m\,2^{1/2}\bigg(\frac{2l+1}{4\pi}\frac{(l-\vert m\vert)!}{(l+\vert m\vert)!}\bigg)^{1/2}P_l^{\vert m \vert}(\cos\theta) Q_m(\phi) \times \begin{cases}
\sin(\vert m \vert \phi) & \text{if } m<0, \\
\cos(\vert m \vert \phi) & \text{if } m\ge 0,
\end{cases}
\end{equation*}
where for $l\in\N$, $m\in\bi 0, l\ei$, $P_l^m$ denotes the associated Legendre polynomial with indices $l$ and $m$. In the remainder of this section, we use {$\alpha = 2/d=1$ by} Weyl's asymptotic law in \Cref{prop:weyl}.

On the sphere, the Gaussian random fields defined using functions of the Laplacian $\gamma(-\Delta_\mathcal{M})$ as in~\eqref{eq:def_z} are particular instances of the class of isotropic random fields on the sphere described in~\cite{lang2015isotropic}. The covariance $C(\theta)$ of such fields between any two points on the sphere is linked to the spherical distance $\theta$ separating the points through the relation
\begin{equation}
C(\theta)=\sum_{l=0}^{\infty} \frac{2l+1}{4\pi}\gamma(l(l+1))^2 P_l(\cos\theta), \quad \theta\in [0,\pi],
\label{eq:covth}
\end{equation}
where $P_l$, $l\in\mathbb{N}_0$, denotes the Legendre polynomial of order~$l$.

Finally, we restrict our numerical experiments to Whittle--Mat\'ern fields by considering \psds of the form $\gamma(\lambda)=\vert \kappa^2 + \lambda\vert^{-\beta}$ for $\lambda \ge 0$ and
some parameters $\kappa >0$, $\beta>1/2$. %
We introduce an additional parameter $a$, which we call practical range, and which is defined from the parameters $\nu$ and $\kappa$ by $a = 3.6527\kappa^{-1}\nu^{0.4874}$.
In the remainder of this section, the \psds $\gamma$ will be characterized by choices of the parameters $\nu$ and $a$. The rationale behind the parameter $a$ comes from numerical experiments conducted in \cite{romary2008inversion} which showed that the correlation range of the Mat\'ern covariance function (on $\R^2$) is very-well approximated by $a$, thus yielding a rule-of-thumb for choosing~$\kappa$.

\begin{figure}
	\centering
	\includegraphics[width=0.45\textwidth]{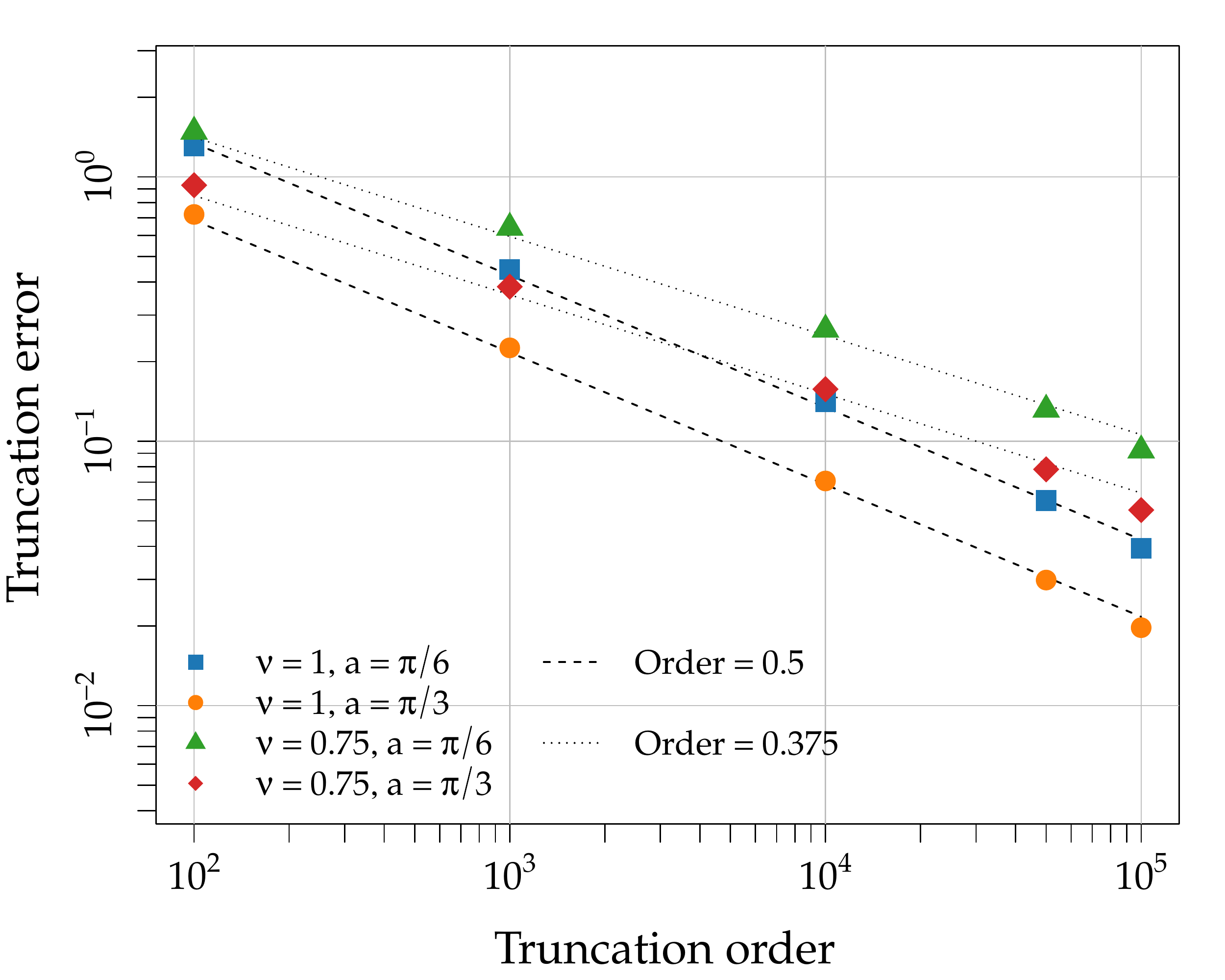}
	\caption{Truncation error $\left\Vert \mathcal{Z} - \mathcal{Z}^{(n)} \right\Vert_{L^2(\Omega; H)}$ on the sphere.\label{fig:trunc}}
\end{figure}

We now present the result obtained when computed numerically the truncation error, and the covariance error.  Results on the error due to the polynomial approximation can be found in Section~\ref{sec:pol_num} of the \sm.

\subsubsection{Truncation error}\label{sec:trunc_exp} 

We look at the truncation error $\Vert \mathcal{Z} - \mathcal{Z}^{(n)} \Vert_{L^2(\Omega; H)}$ between the full expansion $\mathcal{Z}$ and its truncation $\mathcal{Z}^{(n)}$ at order $n\in\N$, for various choices of~$n$. 
This error corresponds to the error term derived in \Cref{th:err_fem} when the discretization space $V_n$ is the set of the first $n$ eigenvalues of the Laplace--Beltrami operator (cf. \Cref{sec:trunc}).
In this case, \Cref{assum:Lh} holds for arbitrary large values of the exponents $r,s>0$ and we therefore expect a convergence of order $\alpha\beta-1/2=\nu/2$.

We compute truncation errors for the \psds $\gamma$ given by
\begin{itemize}
	\item $\nu=0.75$, $a\in\lbrace \pi/6, \pi/3\rbrace$, yielding an expected convergence of order $0.375$;
	\item $\nu=1$, $a\in\lbrace \pi/6, \pi/3\rbrace$, yielding an expected convergence of order~$0.5$;
\end{itemize}
and consider truncation orders $n\in\lbrace 10^2, 10^3, 10^4, 5\cdot 10^4, 10^5\rbrace$. Samples of the corresponding truncated fields are generated using the approach presented in \Cref{sec:trunc}.

The error  $\Vert \mathcal{Z} - \mathcal{Z}^{(n)} \Vert_{L^2(\Omega; H)}$ is approximated by a  Monte Carlo estimate taking the form
\begin{equation*}
\big\Vert \mathcal{Z} - \mathcal{Z}^{(n)} \big\Vert_{L^2(\Omega; H)} 
\approx \bigg({\frac{1}{N_{\text{simu}}}\sum_{k=1}^{N_{\text{simu}}}\big\Vert \mathcal{Z}^{(N_{\max})}_k - \mathcal{Z}^{(n)}_k \big\Vert_{0}^2\bigg)^{1/2}
},
\end{equation*}
where for any $k$,  $\mathcal{Z}^{(N_{\max})}_k$ is an independent realization of the truncation of $\mathcal{Z}$ at a very high order $N_{\max}=10^6$, and $\mathcal{Z}^{(n)}_k$ is a truncation of $\mathcal{Z}^{(N_{\max})}_k$ at order $n$. The number of samples used for this study is $N_{\text{simu}}=500$, which is sufficient as larger choices of~$N_{\max}$ have little impact on the results.  
The results are presented in \Cref{fig:trunc} and show that the theoretical orders of convergence are systematically retrieved.

\subsubsection{Covariance error and computational cost}\label{sec:cov_num}

The covariance error refers to the absolute error in covariance between the model random field and its approximation used in practice. We take here the discretization space $V_n$ to be the finite element space of piecewise linear functions defined on a polyhedral approximation of the sphere with triangular faces, hence following the surface finite element (SFEM) approach \cite{dziuk1988finite}.

We generate $10^6$ samples of the random field $\widehat{\mathcal{Z}}_{n,K}$ while considering finite element spaces defined on gradually refined polyhedral approximations of the sphere. For each choice of parameter defining the spectral density $\gamma$, we set the order $K$ of the polynomial approximation using the approach described in \Cref{sec:pol_approx_err}, with a criterion $\vert c_K/c_{\max}\vert<10^{-12}$. 
The covariance error we compute is given as an error between the covariance functions of the field $\mathcal{Z}$ and its approximation $\widehat{\mathcal{Z}}_{n,K}$. The former is given in~\eqref{eq:covth} and the latter is approximated by a Monte Carlo estimator. The overall error between both covariance functions is then evaluated as the maximum absolute error between their evaluations on a grid of $500$ equi\-spaced points in $(0,\pi)$ along a great circle. The covariance errors are presented in \Cref{fig:cov}, and show that the theoretical convergence rate $2\alpha\beta -1 = \nu$ is confirmed.

Finally, we present the order of polynomial approximation in \Cref{fig:poly_approx} and the associated computation time needed to generate the samples used to compute the covariance errors in \Cref{fig:comp_time}. We observe that although the order of the polynomial approximation grows, the computation time remains small with less than half a second.

\begin{figure}[H]
	\centering
	\begin{subfigure}[t]{.45\textwidth}
		\centering
		\includegraphics[width=\textwidth]{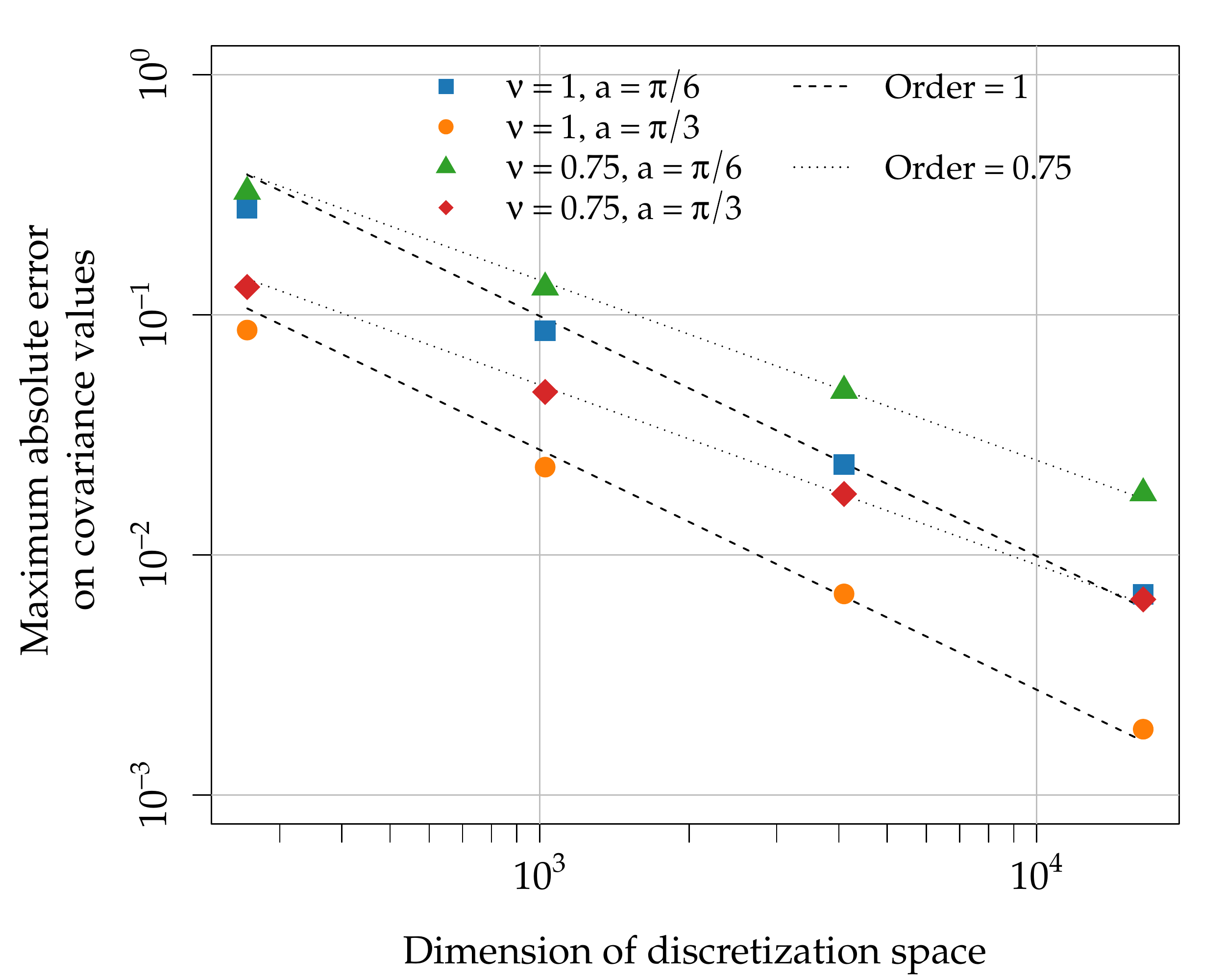}
		\caption{Maximum absolute error between the covariance functions of $\mathcal{Z}$ and  $\widehat{\mathcal{Z}}_{n,K}$.}
		\label{fig:cov}
	\end{subfigure} \\
	\begin{subfigure}{.45\textwidth}
		\centering
		\includegraphics[width=\textwidth]{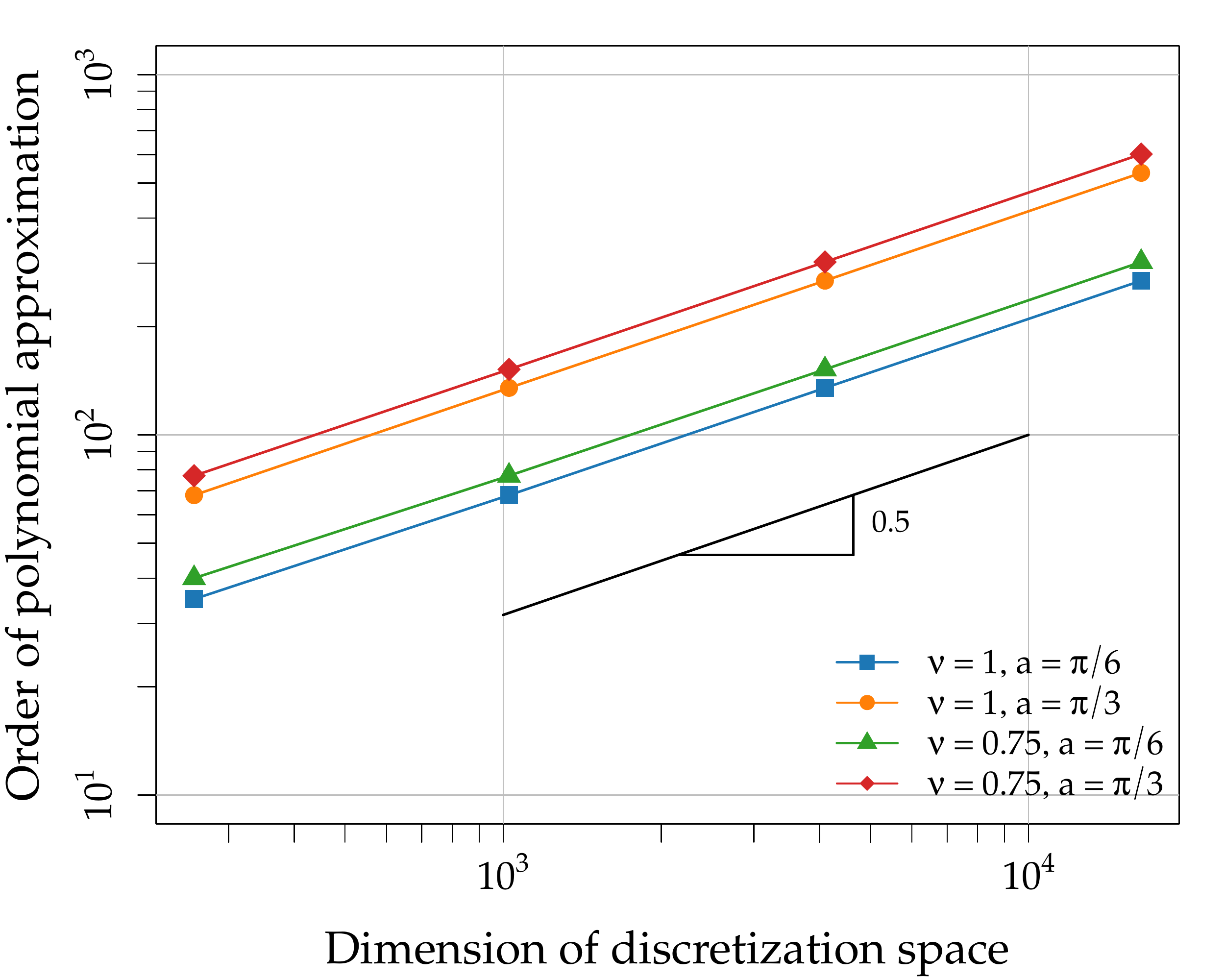}
		\caption{Order of the polynomial approximation used.\label{fig:poly_approx}}
	\end{subfigure}%
	\hfill%
	\begin{subfigure}{.45\textwidth}
		\centering
		\includegraphics[width=\textwidth]{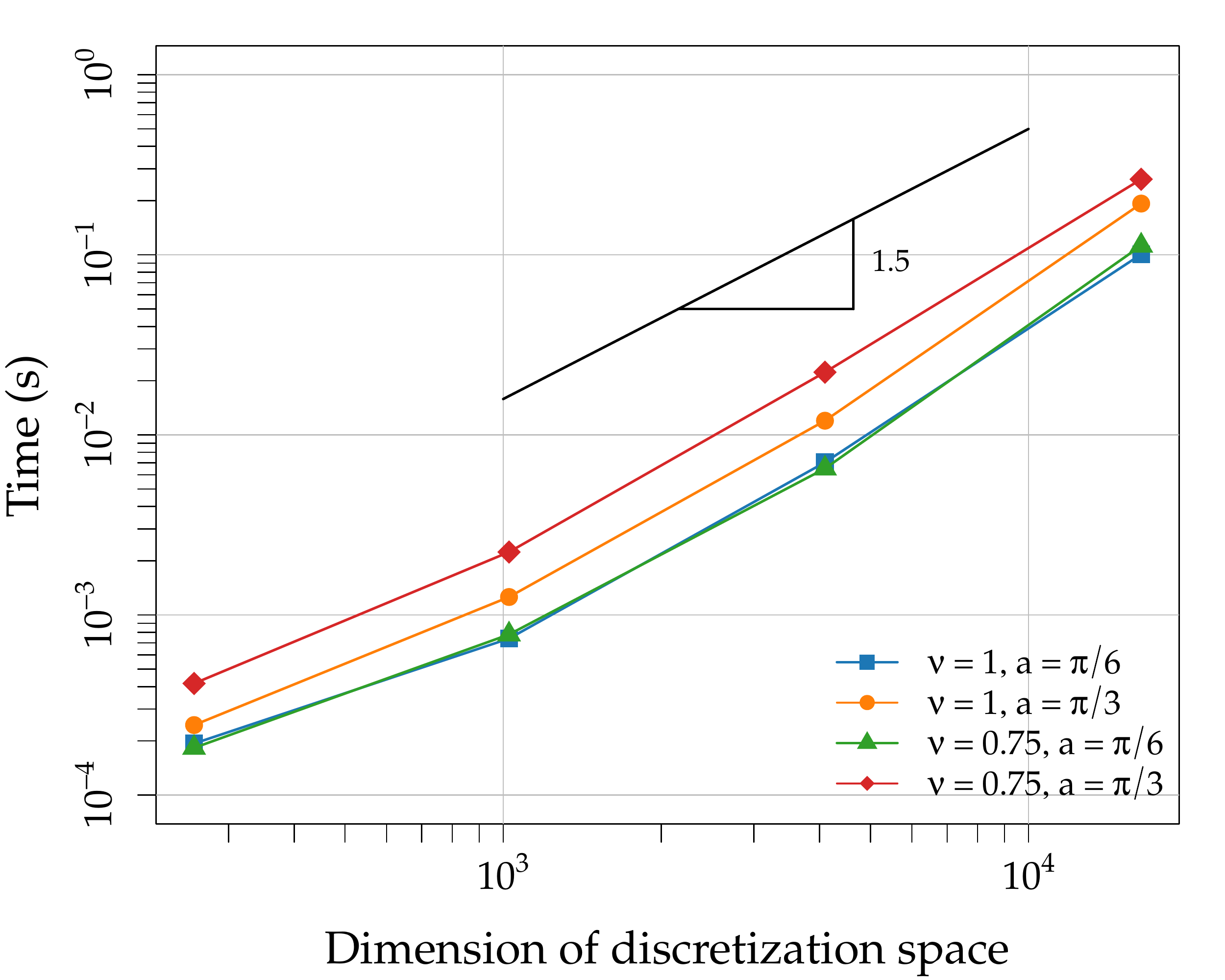}
		\caption{Time needed.\label{fig:comp_time}}
	\end{subfigure}
	\caption{Approximation order used and time needed to compute one sample of the (approximated) Gaussian random field.\label{fig:effort}}	
\end{figure}

\subsection{Numerical experiment on a hyperboloid}

In this section we confirm the error estimate from \Cref{th:err_cov} numerically on a hyperboloid surface. We consider the two-dimensional surface defined implicitly by the equation
\begin{equation*}
	\mathcal{M} = \lbrace (x,y,z)\in\R^3  : x^2+y^2-z^2=1 \text{ and } z\in [-2,2]\rbrace.
\end{equation*}
We equip $\mathcal{M}$ with its canonical metric to turn it into a compact Riemannian manifold of dimension~$2$ and consider once again the sampling of Whittle--Mat\'ern fields using the Galerkin--Chebyshev approach. In particular,  we take again the discretization space $V_n$ to be the finite element space of piecewise linear functions defined on a polyhedral approximation of the surface with triangular faces. 

As in \Cref{sec:cov_num}, we consider the covariance error between the random field and its approximation. More specifically, we evaluate the covariance of the field along the curve $\mathcal{C}= \lbrace (x,y,z)\in\mathcal{M} : y=0 \text{ and } x>0\rbrace$. To do so, we generate samples of the field using the Galerkin--Chebyshev approach and compute the covariance between the point $(1,0,0)\in\mathcal{C}$ and the points $\mathcal{P}=\lbrace (\sqrt{1+z^2}, 0 , z) : z=-2+0.04i, \quad i\in\bi 0,100\ei\rbrace\subset\mathcal{C}$. We generate $2.5\times 10^6$ samples on these points and use a Monte Carlo estimator to estimate the covariances. Note that for each sample the order $K$ of the polynomial approximation is set in the same way as in \Cref{sec:cov_num} and the mass lumping approximation is applied.
We repeat the experience with finite element spaces defined on gradually refined polyhedral approximations of the surface. An example of a sample of the Whittle--Matérn field on~$\mathcal{M}$ along with the sampled points $\mathcal{P}$ is presented in \Cref{fig:ex_cov_hyper}.

\begin{figure}
	\centering
	\includegraphics[height=0.25\textheight]{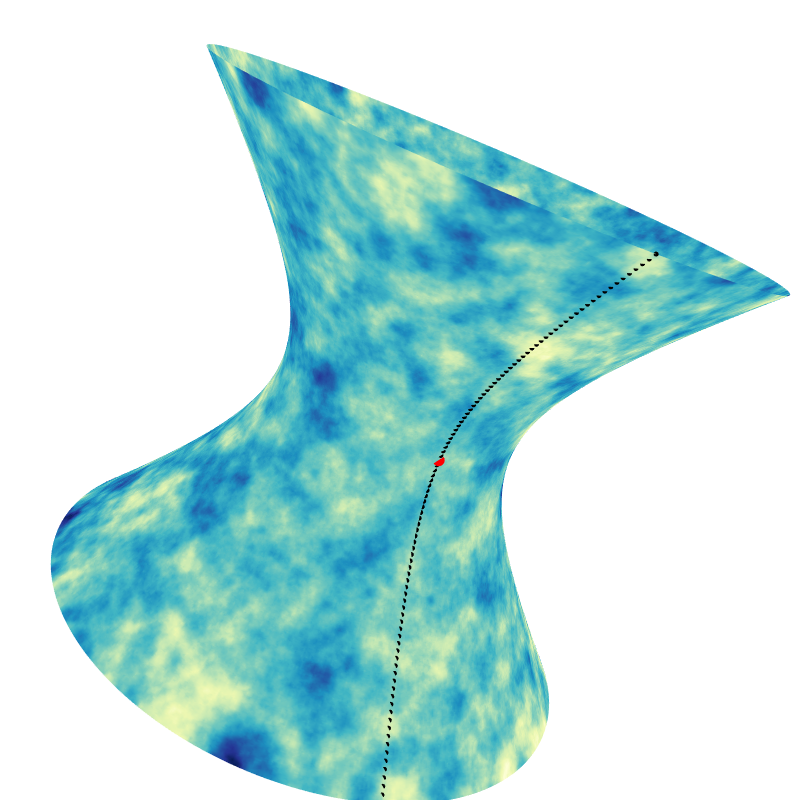}
	\caption{Whittle--Matérn field on the hyperboloid $\mathcal{M}$ along with the sampled points $\mathcal{P}$ used to compute the covariances (in black). The point $(1,0,0)$ is colored in red, and the colors on the surface stand for the value of taken by the field.\label{fig:ex_cov_hyper}}
\end{figure}

 Finally, we compute the covariances with this same approach on a very fine polyhedral approximation of $\mathcal{M}$ (with $540900$ nodes) and use these values as the reference solution. We then compute, for each level of discretization of $\mathcal{M}$, the maximal absolute error between the covariance values and the ground truth.
The result of the numerical experiment is presented in \Cref{fig:error_cov_hyper}. The parameters defining the  \psd $\gamma$ are $\nu=1$ and $a=0.5$ (defined as in \Cref{sec:num}) meaning that we expect convergence of rate $\nu=1$. As can be observed, we retrieve that the maximal absolute error in the covariance decreases as $n ^{-1}$.

\begin{figure}
	\centering
	\includegraphics[height=0.25\textheight]{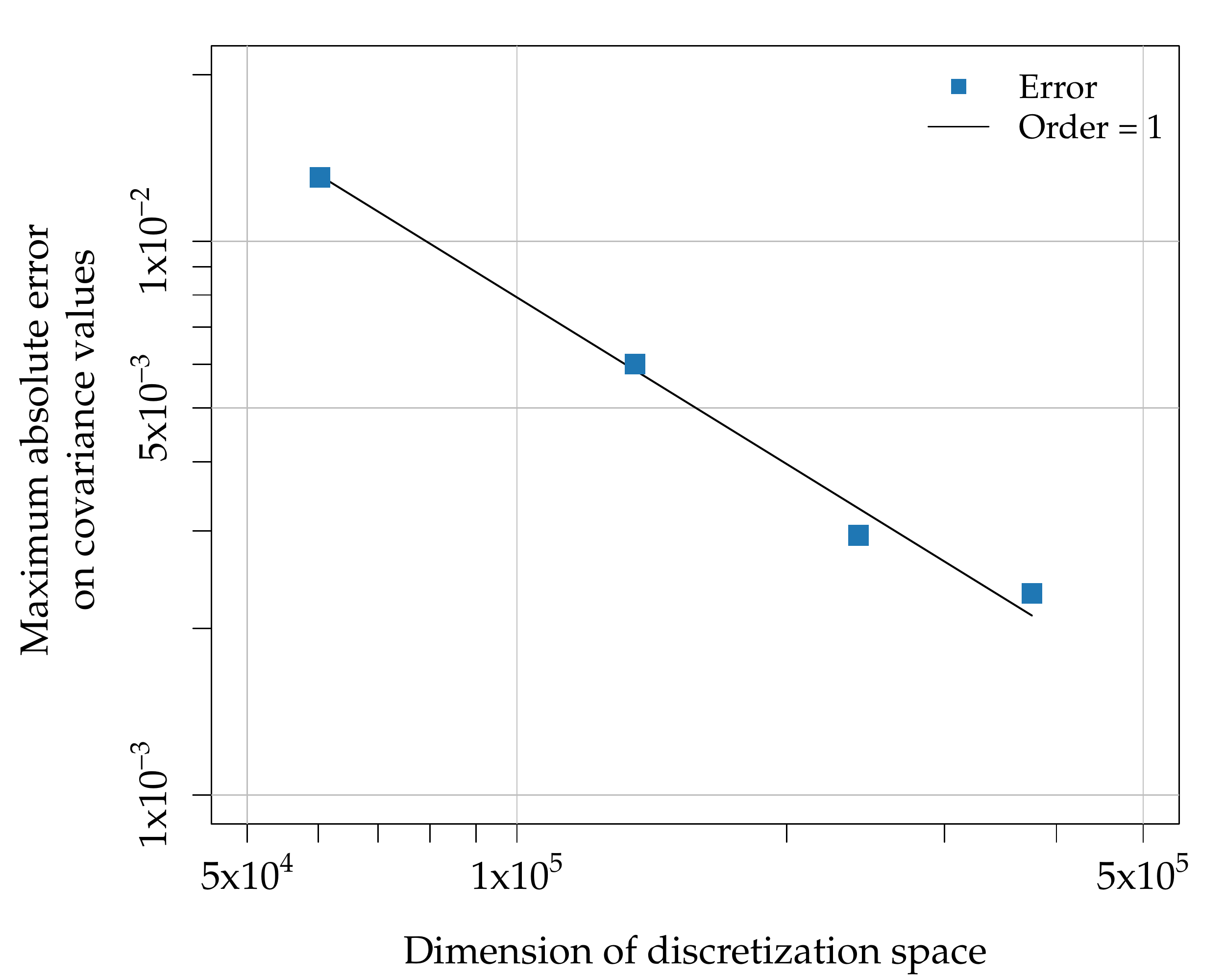}
	\caption{Maximum absolute error in covariance between the Whittle--Matérn field on the hyperboloid and its Galerkin--Chebyshev approximation.\label{fig:error_cov_hyper}}
\end{figure}


\bibliographystyle{plainnat}
\bibliography{allbiblio}



\appendix


%

	\section{Uniform convergence of Chebyshev series}\label{sec:cheb_pres}
	
	The next theorem is proven in \cite[Theorems 7.1, 7.2, 8.1, 8.2]{trefethen2019approximation} and gives conditions for the uniform convergence of Chebyshev series. 
	\begin{theorem}
		Let $\nu \in\N$. If $f:[-1, 1] \rightarrow\R$ is such that its derivatives $f, f', \dots, f^{(\nu-1)}$ are continuous and that $f^{(\nu)}$ is of bounded variation, then the coefficients of the Chebyshev series of $f$ satisfy for any $k>\nu$,
		\begin{equation*}
			\vert c_k\vert \le \frac{2}{\pi(k-\nu)^{\nu+1}} \text{TV}(f^{(\nu)}) \veq
		\end{equation*}
		and for any $K>\nu$, the error of the Chebyshev approximation is bounded by
		\begin{equation*}
			\quad \Vert f - \mathcal{S}_K[f]\Vert_\infty \le \frac{2}{\pi\nu(K-\nu)^\nu} \text{TV}(f^{(\nu)}) \veq
		\end{equation*}
		where $\text{TV}(f^{(\nu)})$ denotes the total variation of $f^{(\nu)}$ over $[-1, 1]$ and $\Vert\cdot\Vert_{\infty}$ denotes the $L^{\infty}$-norm on the segment $[-1, 1]$.
		
		Besides if there exists $\rho >1$ such that the complex function $z\in\C \mapsto f(z)$ is holomorphic inside the ellipse $E_\rho$ centered at $0$, with foci $z=\pm 1$, and semi-major (resp.\ semi-minor) axis of length $(\rho+\rho^{-1})/2$  (resp.\ $(\rho-\rho^{-1})/2$), then, for any $K\ge 0$,
		\begin{equation*}
			\vert c_K\vert \le 
			\frac{2}{\rho^K} \sup_{z\in E_\rho}\vert f(z)\vert
		\end{equation*}
		and
		\begin{equation*}
			\Vert f - \mathcal{S}_K[f]\Vert_\infty \le 
			\frac{2}{\rho^K(\rho-1)} \sup_{z\in E_\rho}\vert f(z)\vert\peq
		\end{equation*}
		\label{thm:cheb}
	\end{theorem}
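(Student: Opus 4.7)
The plan is to push everything onto Fourier series via the substitution $t=\cos\theta$. Setting $F(\theta)=f(\cos\theta)$, the change of variables turns the Chebyshev coefficient into a cosine Fourier coefficient of the $2\pi$-periodic even extension of $F$, namely
\begin{equation*}
c_k = \frac{2}{\pi}\int_0^\pi F(\theta)\cos(k\theta)\dd\theta,
\end{equation*}
and the truncation $\mathcal{S}_K[f]$ becomes the corresponding Fourier partial sum. All four bounds then follow by combining a coefficient decay estimate with the tail bound
\begin{equation*}
\Vert f - \mathcal{S}_K[f]\Vert_\infty \le \sum_{k>K}\vert c_k\vert\,\Vert T_k\Vert_\infty = \sum_{k>K}\vert c_k\vert,
\end{equation*}
where $\Vert T_k\Vert_\infty = 1$ on $[-1,1]$ follows from \eqref{eq:cheb_cos}.

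For the first two inequalities (bounded-variation regime), I would integrate by parts $\nu$ times on the cosine integral. Each integration by parts transfers one derivative onto $F$ and produces a factor $1/k$; the boundary terms at $0$ and $\pi$ vanish because $F$ extends to a smooth $2\pi$-periodic even function whose odd-order $\theta$-derivatives vanish at these endpoints (a parity consequence of the even extension). After $\nu$ steps, the remaining integral is recast as a Riemann--Stieltjes integral against $\di F^{(\nu)}$, bounded in absolute value by $\text{TV}(F^{(\nu)})$. A chain-rule computation relates $\text{TV}(F^{(\nu)})$ to $\text{TV}(f^{(\nu)})$ (the Jacobian factors $\sin\theta$ are bounded by $1$, so the BV norm is controlled), yielding $\vert c_k\vert \le 2\,\text{TV}(f^{(\nu)})/(\pi(k-\nu)^{\nu+1})$ for $k>\nu$. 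Summing over $k>K$ and comparing with $\int_{K-\nu}^\infty x^{-(\nu+1)}\dd x = (K-\nu)^{-\nu}/\nu$ gives the claimed uniform bound.

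For the analytic regime, I would use the Joukowski transform $z=(w+w^{-1})/2$, which maps the circle $\{\vert w\vert = r\}$ bijectively to the Bernstein ellipse $E_r$ with foci $\pm 1$ and semi-axes $(r\pm r^{-1})/2$. Under this map one has $T_k(z)=(w^k+w^{-k})/2$, and the residue theorem provides the contour representation
\begin{equation*}
c_k = \frac{1}{\pi \mathrm{i}}\oint_{\vert w\vert = 1} f\!\left(\tfrac{w+w^{-1}}{2}\right) w^{k-1}\dd w.
\end{equation*}
By the holomorphy of $f$ inside $E_\rho$, the contour can be deformed to $\vert w \vert = \rho$. Bounding the integrand in modulus by $\sup_{E_\rho}\vert f\vert\cdot \rho^{k-1}$ on a circle of arclength $2\pi\rho$ yields $\vert c_k\vert \le 2\rho^{-k}\sup_{E_\rho}\vert f\vert$. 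The uniform-error bound then follows by summing the geometric series $\sum_{k>K}\rho^{-k} = \rho^{-K}/(\rho-1)$.

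The main technical obstacle is the bookkeeping in the bounded-variation case: verifying that the repeated integration by parts produces no boundary terms (which relies on the cosine substitution promoting $C^{\nu-1}$ behavior at the endpoints of $[-1,1]$ into smooth periodic behavior on $\mathbb{R}$), and controlling $\text{TV}(F^{(\nu)})$ in terms of $\text{TV}(f^{(\nu)})$ via the chain rule. The analytic case is essentially a textbook contour-deformation exercise once the Joukowski parametrization is in hand.
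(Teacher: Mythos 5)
The paper does not prove this statement itself; it simply cites \cite[Theorems 7.1, 7.2, 8.1, 8.2]{trefethen2019approximation}. Your analytic-regime argument (Joukowski parametrization, the contour representation of $c_k$, deformation to the circle of radius $\rho$, and the geometric tail sum giving the factor $\rho^{-K}/(\rho-1)$) is correct and is essentially the standard proof given in that reference, so that half stands.

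The bounded-variation half has a genuine gap at the step where you transfer $\theta$-derivatives onto $F(\theta)=f(\cos\theta)$ and assert that $\mathrm{TV}(F^{(\nu)})$ is controlled by $\mathrm{TV}(f^{(\nu)})$ \q{via the chain rule}. The theorem's constant involves the total variation of the $\nu$-th derivative of $f$ \emph{with respect to $x$}, and that quantity does not dominate $\mathrm{TV}(F^{(\nu)})$: already for $\nu=1$ and $f(x)=x$ one has $\mathrm{TV}(f')=0$, while $F'(\theta)=-f'(\cos\theta)\sin\theta=-\sin\theta$ has total variation $2$ on $[0,\pi]$. In general Fa\`a di Bruno produces sums of products $f^{(j)}(\cos\theta)\cdot(\text{trigonometric polynomials})$ for all $j\le\nu$, and the variation of a product is controlled only through sup-norms and variations of \emph{both} factors, so your route yields a constant involving $\Vert f^{(j)}\Vert_\infty$ and $\mathrm{TV}(f^{(j)})$ for lower $j$ as well --- not the stated bound $\tfrac{2}{\pi}(k-\nu)^{-(\nu+1)}\mathrm{TV}(f^{(\nu)})$. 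The repair is to integrate by parts only once in $\theta$, rewrite $\sin\theta\,\sin(k\theta)=\tfrac12\bigl[\cos((k-1)\theta)-\cos((k+1)\theta)\bigr]$, and recognize the result as Chebyshev coefficients of the $x$-derivative $f'$; this gives the recursion $c_k(f)=\bigl(c_{k-1}(f')-c_{k+1}(f')\bigr)/(2k)$, which you iterate $\nu$ times (all derivatives remaining in the $x$ variable, the index shifts accounting for the $(k-\nu)$ factors), concluding with a Riemann--Stieltjes integration by parts against $\di f^{(\nu)}$ to produce the factor $\mathrm{TV}(f^{(\nu)})$. Your tail-sum comparison with $\int_{K-\nu}^{\infty}x^{-(\nu+1)}\dd x$ is correct once that coefficient bound is in place.
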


	\section{Proof of \Cref{prop:diag_lapl_discrg}}\label{appen:diag_lapl_discrg}
	
	\begin{proof}
		
		Take an eigenvalue $\lambda$ of the GEP defined by the matrix pencil $(\bm R, \bm C)$, and denote by $\bm w \neq \bm 0$ an associated eigenvector. Using~\eqref{eq:def_CG}, we have for any $k \in[\![1, n]\!]$,
		$$\sum_{l=1}^n( \nabla_{\mathcal{M}}\psi_k, \nabla_{\mathcal{M}}\psi_l)_0w_l=\lambda\sum_{l=1}^n \left(\psi_k, \psi_l\right)_0w_l \veq$$
		which, by definition of $E_0$, gives for any $k \in[\![1, n]\!]$,
		\begin{equation}
			(\nabla_{\mathcal{M}}\psi_k, \nabla_{\mathcal{M}} E_0(\bm w))_0= \lambda(\psi_k, E_0(\bm w))_0 \peq
			\label{eq:eq_psi}
		\end{equation}
		Note that $\lbrace \psi_k\rbrace_{1\le k\le n}$ is also a basis of $V_n$ as it is a family of linearly independent functions spanning $V_n$. Denote by $\bm A\in\R^{n\times n}$ the invertible change-of-basis matrix between $\lbrace \psi_k\rbrace_{1\le k\le n}$ and the orthonormal basis $\lbrace f_k \rbrace_{1\le k\le n}$ of $V_n$ in~\eqref{eq:proj_lap}. In particular, $\bm A$ satisfies, for any $k \in[\![1, n]\!]$,
		\begin{equation*}
			\psi_k = \sum_{l=1}^n A_{kl} f_l \peq
		\end{equation*}
		Injecting this last equality in~\eqref{eq:eq_psi} gives
		$$\bm A 
		\left[
		( \nabla_{\mathcal{M}}f_k, \nabla_{\mathcal{M}}E_0(\bm w))_0 
		\right]_{1\le k\le n}
		=\lambda\bm A 
		\left[
		(f_k, E_0(\bm w))_0 
		\right]_{1\le k\le n}.$$
		Multiplying both members of this equality by $\bm A^{-1}$ yields that for any $k\in [\![1, n]\!]$,
		$$( \nabla_{\mathcal{M}}f_k, \nabla_{\mathcal{M}}E_0(\bm w))_0=\lambda( f_k, E_0(\bm w))_0 \peq$$
		And so, given that $E_0(\bm w)\in V_n$, 
		\begin{align*}
			-\Delta_n E_0(\bm w) = \sum\limits_{k=1}^n ( \nabla_{\mathcal{M}}f_k,\nabla_{\mathcal{M}}E_0(\bm w))_0 f_k
			=\lambda\sum\limits_{k=1}^n ( f_k, E_0(\bm w))_0 f_k
			=\lambda E_0(\bm w) \peq
		\end{align*}
		Therefore $\lambda$ is an eigenvalue of $-\Delta_n$ and $E_0$ maps the eigenvectors of $(\bm R, \bm C)$  to the eigenfunctions of $-\Delta_n$.\\
		Observe that for any $\bm x\in\mathbb{R}^n$, 
		$$\Vert E_0(\bm x)\Vert_0^2
		=\sum_{k=1}^n\sum_{l=1}^n x_k( \psi_k, \psi_l)_0x_l
		=\bm x^T \bm C \bm x=\Vert\bm x\Vert_{\bm C}^2 \speq$$
		Hence, given that it is also linear, $E_0$ is an isometry between $(\R^n, \Vert\cdot\Vert_{\bm C})$ and $(V_n, \Vert\cdot\Vert_0)$.
		Consequently, $E_0$ is injective: for any $\bm x\in\R^n$, $E_0(\bm x)=0$ implies that $\Vert\bm x\Vert_{\bm C}^2=\Vert E_0(\bm x)\Vert_0^2=0$ and so that $\bm x=0$. Finally, using the rank–nullity theorem \cite{friedberg2003linear}, $E_0$ is bijective (as an injective linear mapping between two vector spaces with the same dimension).\qed
	\end{proof}

	\section{Proof of the eigenvalue estimates for SFEM}\label{appen:sfem}

		Assume that $\mathcal{M}$ is a smooth compact $2$-dimensional surface without boundary  equipped with the metric $g$ induced by the Euclidean metric on $\R^3$. Following the SFEM approach, we consider a polyhedral approximation $\mathcal{M}_h$ of $\mathcal{M}$ with mesh size $h$ such that the vertices of $\mathcal{M}_h$ lie on $\mathcal{M}$. 
		Let $V_h$ be the finite-dimensional space of functions obtained by \q{lifting} on $\mathcal{M}$ the linear finite element space defined on the polyhedral mesh $\mathcal{M}_h$. Note in particular that $V_h$ is geometrically consistent in the sense that $V_h \subset \dot{H}^1$. Denote then by $(\lambda_{k}, e_k)_{k\in\N}$ the eigenpairs of the Laplace--Beltrami operator $-\Delta_{\mathcal{M}}$ and by $(\lambda_{k}^{(n)}, e_k^{n})_{1\le k\le n}$ the eigenpairs of the Galerkin approximation of $-\Delta_{\mathcal{M}}$ on $V_h$, as defined in \Cref{sec:rg_discr} (where $n=\dim V_h$).

		Following \cite[Theorem 3.1]{knyazev2006new} and the smoothness of the eigenfunctions of $-\Delta_{\mathcal{M}}$, there exists $C$ (independent of $h$) such that for any $k\in\lbrace 1, \dots, n\rbrace$, 
		\begin{equation*}
			0 \le \lambda_{k}^{(n)} - \lambda_k \le C \lambda_{k}^{(n)}\lambda_{k}h^2,
		\end{equation*}
		(see \cite[Lemma 4.1]{bonito2022numerical} for a complete proof).
		Reinserting this bound and using the growth of the eigenvalues yield
		\begin{equation*}
			0 \le \lambda_{k}^{(n)} - \lambda_k \le C \lambda_{k}^2h^2 + C \lambda_{k}h^2(\lambda_{k}^{(n)} - \lambda_{k}) \le C \lambda_{k}^2h^2(1+C\lambda_{k}^{(n)}h^2) \le C \lambda_{k}^2h^2(1+C\lambda_{n}^{(n)}h^2).
		\end{equation*}
		Note then that by an inverse inequality \cite[Proposition 2.7]{demlow2009higher}, there exists $C_{\text{INV}}>0$ independent of $h$  such that,  for $h$ small enough,
		\begin{equation*}
			\Vert \nabla_\mathcal{M} e_n^{(n)}\Vert_0 \le C_{\text{INV}} h^{-1} \Vert e_n^{(n)} \Vert_0= C_{\text{INV}} h^{-1}.
		\end{equation*}
		Since $\Vert \nabla_\mathcal{M} e_n^{(n)}\Vert_0^2=(\nabla_\mathcal{M} e_n^{(n)}, \nabla_\mathcal{M} e_n^{(n)})_0=\lambda_{n}^{(n)}(e_n^{(n)}, e_k^{(n)})_0=\lambda_{n}^{(n)}$, we get $\lambda_{n}^{(n)} \le (C_{\text{INV}})^2 h^{-2}$. Hence we can conclude that
		\begin{equation*}
			0 \le \lambda_{k}^{(n)} - \lambda_k 
			\le C' \lambda_{k}^2h^2,
		\end{equation*}
		where $C'=C(1+C(C_{\text{INV}})^2)$ is a constant independent of $k$ and $h$.	
		Finally, assuming that the polyhedral approximations $\mathcal{M}_h$ for different values of $h$ are built from uniform refinements of an initial polyhedral surface, the size $n$ of $V_h$ can be linked to the mesh size $h$ by $nh^2 \lesssim 1$, which in turn gives
		\begin{equation*}
			0 \le \lambda_{k}^{(n)} - \lambda_k 
			\le C' \lambda_{k}^2n^{-1}.
		\end{equation*}


\pagebreak

\part*{}
\addcontentsline{toc}{part}{\sm}
\begin{center}
	\begin{Large}
		{\scshape \textbf{\sm}}
	\end{Large}
\end{center}
\medskip

\setcounter{section}{0}
\renewcommand{\sectionname}{}
\renewcommand{\thesection}{SM\arabic{section}} 


\section{Series bounds}
\label{sec::proof_fem_gerf}

\begin{lemma}
	Let $m\in\R$, $m\neq -1$ and $n\in\N$. Then,
	\begin{equation*}
	\frac{1}{m+1}\left(1-\frac{1}{n^{m+1}}\right) +\frac{1}{n^{\max\lbrace 1, m+1\rbrace}} 
	\le \frac{1}{n}\sum_{k=1}^n  \left(\frac{k}{n}\right)^m 
	\le \frac{1}{m+1}\left(1-\frac{1}{n^{m+1}}\right)+\frac{1}{n^{\min\lbrace 1, m+1\rbrace}}
	\end{equation*}
	\label{lem:ineq_sum_int}
\end{lemma}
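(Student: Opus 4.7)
The starting point is the observation that the quantity $\frac{1}{m+1}(1-n^{-(m+1)})$ appearing in both bounds equals $\int_{1/n}^1 t^m\,dt$, so it is natural to compare the sum $S_n := \frac{1}{n}\sum_{k=1}^n (k/n)^m$ with a Riemann sum of this integral on $[1/n,1]$, a domain that avoids the possible singularity of $t\mapsto t^m$ at $0$ when $m<0$.

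The plan is to treat both inequalities in parallel via two different decompositions of $S_n$:
\[
S_n \;=\; \frac{1}{n^{m+1}} + \frac{1}{n}\sum_{k=2}^{n}\Big(\frac{k}{n}\Big)^m
\qquad\text{and}\qquad
S_n \;=\; \frac{1}{n} + \frac{1}{n}\sum_{k=1}^{n-1}\Big(\frac{k}{n}\Big)^m.
\]
In each line the remaining sum is a Riemann sum for $\int_{1/n}^1 t^m\,dt$ associated with the uniform partition $1/n < 2/n < \cdots < 1$ of $[1/n,1]$ into $n-1$ subintervals of width $1/n$: the first sum uses right endpoints while the second uses left endpoints. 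Monotonicity of $t\mapsto t^m$ will decide in each case whether the Riemann sum over- or under-estimates the integral.

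When $m>0$ the map $t\mapsto t^m$ is increasing, so the right-endpoint sum lies above the integral and the left-endpoint sum lies below it. The first decomposition then yields $S_n \ge n^{-(m+1)} + \frac{1}{m+1}(1-n^{-(m+1)})$ and the second yields $S_n \le n^{-1} + \frac{1}{m+1}(1-n^{-(m+1)})$; since $\max\{1,m+1\}=m+1$ and $\min\{1,m+1\}=1$ in this range, these are exactly the claimed bounds. When $m<0$ (regardless of whether $-1<m<0$ or $m<-1$) the function is decreasing, all Riemann-sum inequalities reverse, and the very same two decompositions swap their roles: the first now produces the upper bound and the second the lower bound. Since in this range $\min\{1,m+1\}=m+1$ and $\max\{1,m+1\}=1$, the remainder terms line up correctly. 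The degenerate cases $n=1$ and $m=0$ are verified directly (both sides equal $1$).

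The only slightly delicate bookkeeping I expect concerns $m<-1$, where $\tfrac{1}{m+1}<0$ and $n^{-(m+1)}$ is large. However since the integral $\int_{1/n}^1 t^m\,dt = \frac{1}{m+1}(1-n^{-(m+1)})$ remains a proper positive integral for every $m\neq -1$ (we stay a positive distance from the singularity) and the monotone-function Riemann-sum comparisons hold term by term, no sign issue actually arises and the argument goes through uniformly in $m$.
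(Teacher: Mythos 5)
Your proposal is correct and follows essentially the same route as the paper: both arguments compare the left- and right-endpoint Riemann sums of $t\mapsto t^m$ over the uniform partition of $[1/n,1]$ with the integral $I_n=\int_{1/n}^1 t^m\,\mathrm{d}t=\frac{1}{m+1}\bigl(1-n^{-(m+1)}\bigr)$, using monotonicity of $t^m$ and splitting into the cases $m\ge 0$ and $m\le 0$; your "pulled-out" terms $n^{-(m+1)}$ and $1/n$ are exactly the boundary corrections that appear when the paper rearranges its sandwiching inequality. The bookkeeping, including the case $m<-1$, checks out.
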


\begin{proof}
	Let $n\ge 1$ and let $S_n$ denote the sum $S_n=\sum_{k=1}^n  \left(\frac{k}{n}\right)^m$. \\
	First, assume that $m\le 0$. Then, for any $k\in \bi 1, n-1\ei$ and any $t\in [k/n, \, (k+1)/n]$,
	\begin{equation*}
	\left(\frac{k+1}{n}\right)^m \le t^m \le \left(\frac{k}{n}\right)^m.
	\end{equation*}
	Integrating both inequalities over $[k/n, \, (k+1)/n]$ and summing them for $k\in \bi 1, n-1\ei$ gives:
	\begin{equation*}
	\frac{1}{n} \left( S_n - \frac{1}{n^m}\right) \le I_n \le \frac{1}{n} (S_{n}-1),
	\end{equation*}
	where $I_n=\int_{1/n}^1 t^m \dd t=\left(1-n^{-(m+1)}\right)/(m+1)$.
	Hence, we have
	\begin{equation*}
	I_n +\frac{1}{n} \le \frac{1}{n}S_n \le I_n+\frac{1}{n^{m+1}}.
	\end{equation*}
	Similarly, if $m\ge 0$ we get
	\begin{equation*}
	I_n +\frac{1}{n^{m+1}} \le \frac{1}{n}S_n \le I_n+\frac{1}{n}.
	\end{equation*}
	So, for any $ m \neq -1$, we have
	\begin{equation*}
	I_n +\frac{1}{n^{\max\lbrace 1, m+1\rbrace}} \le \frac{1}{n}S_n \le I_n+\frac{1}{n^{\min\lbrace 1, m+1\rbrace}}. \qedhere
	\end{equation*}
\end{proof}

\begin{lemma}
	Let $m\in\R, m>1$ and let $J\in\N$, $J\ge 1$. Then,
	\begin{equation*}
	\frac{1}{(m-1)(J+1)^{m-1}}\le \sum_{j= J+1}^\infty j^{-m} \le \frac{1}{(m-1)J^{m-1}}
	\end{equation*}
	\label{eq:encadr_sum}
\end{lemma}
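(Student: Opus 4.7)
The plan is to use the standard integral comparison test for the decreasing positive function $f(x) = x^{-m}$ on $[1,\infty)$. Since $m > 1$, this function is integrable on $[J,\infty)$ for any $J \ge 1$, and an explicit antiderivative is available, namely $-x^{-(m-1)}/(m-1)$.

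First I would establish the upper bound. Since $f$ is decreasing, for each integer $j \ge J+1$ and each $x \in [j-1,j]$, we have $j^{-m} \le x^{-m}$. Integrating over $[j-1,j]$ yields $j^{-m} \le \int_{j-1}^{j} x^{-m}\dd x$. Summing over $j \ge J+1$ telescopes into a single improper integral:
\begin{equation*}
\sum_{j=J+1}^{\infty} j^{-m} \le \int_{J}^{\infty} x^{-m}\dd x = \frac{1}{(m-1)J^{m-1}}.
\end{equation*}

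Next I would establish the lower bound by the symmetric estimate. Since $f$ is decreasing, for each $j \ge J+1$ and each $x \in [j,j+1]$, we have $j^{-m} \ge x^{-m}$, and integrating gives $j^{-m} \ge \int_j^{j+1} x^{-m}\dd x$. Summing over $j \ge J+1$ yields
\begin{equation*}
\sum_{j=J+1}^{\infty} j^{-m} \ge \int_{J+1}^{\infty} x^{-m}\dd x = \frac{1}{(m-1)(J+1)^{m-1}}.
\end{equation*}

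This is entirely routine; the only mild care needed is checking that the improper integrals converge (guaranteed by $m > 1$) and that the indexing of the sum matches the integration limits in each direction (which is why the upper bound integrates from $J$ while the lower bound integrates from $J+1$). There is no substantial obstacle.
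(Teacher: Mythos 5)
Your proof is correct and uses the same integral-comparison argument that the paper's proof sketch invokes (the paper merely states that the result follows from bounding the relevant integrals of $t^{-m}$, without writing out the details you supply). The indexing of the comparison intervals and the convergence justification are both handled properly, so nothing further is needed.
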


\begin{proof}
	This result is obtained straightforwardly by upper-bounding  and lower-bounding the integrals $\int_{1}^{J} t^{-m}\di t$, $\int_{1}^{J+1} t^{-m}\di t$ and $\int_{J+1}^{+\infty} t^{-m}\di t$.
\end{proof}

\section{Additional properties of the Galerkin discretization}

\begin{lemma}
	Let $\bm C$ and $\bm R$ be the mass and stiffness matrices defined in~\eqref{eq:def_CG}.
	Then,  $\bm C$ is a symmetric positive definite matrix and $\bm R$ is a symmetric positive semi-definite matrix.
	\label{prop:prop_CG}
\end{lemma}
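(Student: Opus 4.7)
The plan is to prove the two properties of $\bm C$ and $\bm R$ by unpacking the quadratic forms $\bm x^T\bm C\bm x$ and $\bm x^T\bm R\bm x$ and recognizing each as a squared $L^2$-norm.

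Symmetry of both matrices is immediate: it follows at once from the symmetry of the inner product $(\cdot,\cdot)_0$ on $H$, because the $(k,l)$ entry of $\bm C$ (resp.\ $\bm R$) equals the $(l,k)$ entry by swapping the two arguments of the inner product (and, for $\bm R$, noting that $(\nabla_{\mathcal{M}}\psi_k,\nabla_{\mathcal{M}}\psi_l)_0$ is symmetric in $(k,l)$).

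For positive semi-definiteness, I would take any $\bm x=(x_1,\dots,x_n)^T\in\R^n$, set $\varphi=\sum_{k=1}^n x_k\psi_k\in V_n\subset\dot H^1$, and use bilinearity to rewrite
\begin{equation*}
\bm x^T\bm C\bm x=\sum_{k,l=1}^n x_k x_l(\psi_k,\psi_l)_0=(\varphi,\varphi)_0=\Vert\varphi\Vert_0^2\ge 0,
\end{equation*}
and similarly
\begin{equation*}
\bm x^T\bm R\bm x=\sum_{k,l=1}^n x_k x_l(\nabla_{\mathcal{M}}\psi_k,\nabla_{\mathcal{M}}\psi_l)_0=\Vert\nabla_{\mathcal{M}}\varphi\Vert_0^2\ge 0.
\end{equation*}
This gives the semi-definiteness of both matrices simultaneously.

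To upgrade $\bm C$ to positive definiteness, suppose $\bm x^T\bm C\bm x=0$. Then $\Vert\varphi\Vert_0=0$, so $\varphi=\sum_{k=1}^n x_k\psi_k=0$ in $H$, and the hypothesis that $\lbrace\psi_k\rbrace_{1\le k\le n}$ is linearly independent forces $\bm x=\bm 0$. No obstacle is expected; the only subtle point is to note why $\bm R$ is \emph{not} in general positive definite: for $\varphi$ a (nonzero) locally constant function in $V_n$ (e.g., the constant functions, which belong to $V_n$ by assumption on the approximation space), one has $\nabla_{\mathcal{M}}\varphi=0$ so the corresponding coefficient vector lies in the kernel of $\bm R$, which is why only semi-definiteness holds there.
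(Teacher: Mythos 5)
Your proof is correct and follows essentially the same route as the paper's: symmetry from the symmetry of $(\cdot,\cdot)_0$, and the identification of the quadratic forms $\bm x^T\bm C\bm x$ and $\bm x^T\bm R\bm x$ with $\Vert\varphi\Vert_0^2$ and $\Vert\nabla_{\mathcal{M}}\varphi\Vert_0^2$ respectively, using linear independence of the $\psi_k$ to upgrade $\bm C$ to positive definiteness. The closing remark on why $\bm R$ fails to be definite (constants in the kernel) is a correct observation not present in the paper's proof, but the core argument is identical.
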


\begin{proof}
	On one hand, note that $\bm C$ is symmetric since the functions $\lbrace \psi_k\rbrace_{1\le k\le n}$ are real-valued. Also, for any $\bm x\in\R^n$, 
	\begin{equation*}
	\bm x^T\bm C\bm x=\sum_{k=1}^n\sum_{l=1}^n x_kx_l (  \psi_k,   \psi_l)_0 =\bigg\Vert\sum_{k=1}^n x_k \psi_k\bigg\Vert_{0}^2\ge 0 \speq
	\end{equation*}
	Given that the functions $\lbrace \psi_k\rbrace_{1\le k\le n}$ are linearly independent, this quantity is zero if and only if $\bm x=\bm 0$. Hence, $\bm C$ is positive definite.\\
	On the other hand, $\bm R$ is by definition symmetric.
	And, for any $\bm x\in\R^n$, 
	\begin{equation*}
	\bm x^T\bm R\bm x=\bigg( \sum_{k=1}^n x_k \nabla_{\mathcal{M}}\psi_k,\, \sum_{l=1}^n x_l\nabla_{\mathcal{M}} \psi_l\bigg)_0=\bigg\Vert\sum_{k=1}^n x_k \nabla_{\mathcal{M}}\psi_k\bigg\Vert_0^2\ge 0.
	\end{equation*}
	Hence $\bm R$ is positive semi-definite.
\end{proof}

\begin{prop}
	The definition of $\gamma(-\Delta_n)$ in \eqref{eq:discr_pseudo_diff} does not depend on the choice of orthonormal basis $\lbrace e_{k}^{(n)}\rbrace_{1\le k\le n}$ of eigenfunctions of $-\Delta_n$ satisfying for any $k\in\bi 1,n\ei$, $-\Delta_n e_{k}^{(n)}=\lambda_{k}^{(n)}e_{k}^{(n)}$.
	\label{prop:onb}
\end{prop}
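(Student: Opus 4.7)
The plan is to rewrite $\gamma(-\Delta_n)\varphi$ as a sum over the \emph{distinct} eigenvalues, expressing it in terms of the orthogonal projections onto the associated eigenspaces rather than the individual eigenfunctions. Since these spectral projections are intrinsic objects attached to $-\Delta_n$ (they do not require a choice of orthonormal basis), basis-independence will follow immediately.

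Concretely, let $\mu_1 < \mu_2 < \cdots < \mu_p$ denote the distinct eigenvalues of $-\Delta_n$, with multiplicities $m_1,\dots,m_p$ satisfying $m_1+\cdots+m_p=n$, and for each $j\in\bi 1,p\ei$ let $E_j\subset V_n$ be the corresponding eigenspace. Any orthonormal basis $\{e_k^{(n)}\}_{1\le k\le n}$ of $V_n$ composed of eigenfunctions of $-\Delta_n$ partitions into an orthonormal basis of each $E_j$: if $I_j=\{k\in\bi 1,n\ei : \lambda_k^{(n)}=\mu_j\}$, then $\{e_k^{(n)}\}_{k\in I_j}$ is an orthonormal basis of $E_j$. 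I would then group the sum in~\eqref{eq:discr_pseudo_diff} according to this partition, writing
\begin{equation*}
\gamma(-\Delta_n)\varphi = \sum_{j=1}^p \gamma(\mu_j)\sum_{k\in I_j}(\varphi, e_k^{(n)})_0\, e_k^{(n)}, \qquad \varphi\in V_n.
\end{equation*}

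The next step is the standard fact from finite-dimensional linear algebra that the inner expression is exactly the orthogonal projection $\Pi_j\varphi$ of $\varphi$ onto $E_j$ (with respect to $(\cdot,\cdot)_0$), a quantity which depends only on $E_j$ and not on the chosen orthonormal basis of $E_j$. Indeed, for any two orthonormal bases $\{e_k^{(n)}\}_{k\in I_j}$ and $\{\tilde e_k^{(n)}\}_{k\in I_j}$ of $E_j$, both sums $\sum_{k\in I_j}(\varphi,e_k^{(n)})_0 e_k^{(n)}$ and $\sum_{k\in I_j}(\varphi,\tilde e_k^{(n)})_0 \tilde e_k^{(n)}$ coincide with the unique element of $E_j$ realizing the orthogonal projection of $\varphi$ onto $E_j$. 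Consequently,
\begin{equation*}
\gamma(-\Delta_n)\varphi = \sum_{j=1}^p \gamma(\mu_j)\,\Pi_j\varphi,
\end{equation*}
and the right-hand side depends only on the spectral data $(\mu_j,E_j)_{1\le j\le p}$ of $-\Delta_n$, not on any particular orthonormal basis of eigenfunctions. This proves the claim.

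There is no real obstacle here; the only point to be careful about is the partition argument, which relies on the fact that eigenspaces associated with distinct eigenvalues are $(\cdot,\cdot)_0$-orthogonal (so that an orthonormal family of eigenfunctions of $-\Delta_n$ is automatically compatible with the eigenspace decomposition). This orthogonality follows from the symmetry of $-\Delta_n$ with respect to $(\cdot,\cdot)_0$, which is a direct consequence of the defining Galerkin identity $(-\Delta_n\varphi,v)_0=(\nabla_\mathcal{M}\varphi,\nabla_\mathcal{M}v)_0$ being symmetric in $(\varphi,v)$.
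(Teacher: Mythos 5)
Your proof is correct, and it takes a genuinely different route from the one in the paper. You rewrite $\gamma(-\Delta_n)$ in terms of the spectral projections $\Pi_j$ onto the eigenspaces of the distinct eigenvalues, so that basis-independence is inherited from the basis-independence of orthogonal projection; the only points needing care (that a subfamily $\{e_k^{(n)}\}_{k\in I_j}$ of an orthonormal eigenbasis is a full orthonormal basis of $E_j$, and that distinct eigenspaces are $(\cdot,\cdot)_0$-orthogonal by the symmetry of the Galerkin bilinear form) are exactly the ones you flag, and they hold. The paper instead works at the level of coordinates: it introduces the change-of-basis matrix $\bm A$ between two orthonormal eigenbases, shows $\bm A$ is orthogonal, derives from $-\Delta_n e_k^{(n)}=\lambda_k^{(n)}e_k^{(n)}$ the entrywise identity $\lambda_k^{(n)}A_{kl}=\lambda_l^{(n)}A_{kl}$ (so $A_{kl}=0$ unless the two eigenvalues coincide, whence $\gamma(\lambda_k^{(n)})A_{kl}=\gamma(\lambda_l^{(n)})A_{kl}$), and concludes via the commutation $\gamma(\bm\Lambda)\bm A=\bm A\gamma(\bm\Lambda)$ and $\bm A^T\bm A=\bm I$. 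The two arguments encode the same underlying fact — that $\bm A$ is block-diagonal with respect to the eigenspace decomposition — but yours packages it conceptually through the functional calculus $\gamma(-\Delta_n)=\sum_j\gamma(\mu_j)\Pi_j$, which is shorter and makes the invariance self-evident, while the paper's matrix computation is more elementary and stays entirely within the coordinate framework it uses elsewhere (e.g.\ for $\gamma(\bm S)$ in Theorem~\ref{th:cov_weights}).
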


\begin{proof}
	Let $\lbrace e_{k}^{(n)}\rbrace_{1\le k\le n}$ and $\lbrace \tilde{e}_{k}^{(n)}\rbrace_{1\le k\le n}$ denote two orthonormal bases of $V_n$ such that for any $k\in\bi 1,n\ei$, $-\Delta_n e_{k}^{(n)}=\lambda_{k}^{(n)}e_{k}^{(n)}$ and $-\Delta_n \tilde{e}_{k}^{(n)}=\lambda_{k}^{(n)}\tilde{e}_{k}^{(n)}$. Assume that $\gamma(-\Delta_n)$ is defined by~\eqref{eq:discr_pseudo_diff}.
	
	Let $\bm A$ be the change-of-basis matrix between $\lbrace e_{k}^{(n)}\rbrace_{1\le k\le n}$ and $\lbrace \tilde{e}_{k}^{(n)}\rbrace_{1\le k\le n}$, i.e., for any $k\in\bi 1,n\ei$, $e_{k}^{(n)} = \sum_{l=1}^nA_{kl}\tilde{e}_{l}^{(n)}$.
	
	Note that, since $\lbrace \tilde{e}_{k}^{(n)}\rbrace_{1\le k\le n}$ is orthonormal, we have for any $ k,k' \in\bi 1,n\ei$,
	\begin{align*}
	( e_{k}^{(n)},e_{k'}^{(n)})_0
	&=\sum_{l,l'=1}^nA_{kl}A_{k'l'}(\tilde{e}_{l}^{(n)},\tilde{e}_{l'}^{(n)})_0
	=\sum_{l=1}^nA_{kl}A_{k'l}
	=[\bm A\bm A^T]_{kk'}\peq
	\end{align*}
	Therefore, since $\lbrace e_{k}^{(n)}\rbrace_{1\le k\le n}$ is also orthonormal, we have  $\bm A\bm A^T=\bm I_n=\bm A^T\bm A$. 
	
	Then, recall that $\lbrace e_{k}^{(n)}\rbrace_{1\le k\le n}$ and $\lbrace \tilde{e}_{k}^{(n)}\rbrace_{1\le k\le n}$ are eigenfunctions of $-\Delta_n$. Hence, for any $k\in\bi 1, n\ei$,
	$$-\Delta_n e_{k}^{(n)}=\lambda_{k}^{(n)}e_{k}^{(n)}=\sum_{l=1}^n\lambda_{k}^{(n)}A_{kl}\tilde{e}_{l}^{(n)}\veq$$
	and, by linearity of $-\Delta_n$, 
	\begin{equation*}
	-\Delta_n e_{k}^{(n)}= \sum_{l=1}^nA_{kl}(-\Delta_n \tilde{e}_{l}^{(n)})=\sum_{l=1}^n\lambda_{l}^{(n)}A_{kl} \tilde{e}_{l}^{(n)} \peq
	\end{equation*}
	Consequently, by identification of both formulas, for any $k,l\in\bi 1,n\ei$,
	\begin{equation*}
	\lambda_{k}^{(n)}A_{kl}=\lambda_{l}^{(n)}A_{kl} \peq
	\end{equation*}
	A proof by contradiction then gives that for any $k,l\in\bi 1,n\ei$, the following also holds:
	$$\gamma(\lambda_{k}^{(n)})A_{kl}=\gamma(\lambda_{l}^{(n)})A_{kl},$$
	and therefore,
	\begin{equation*}
	\gamma(\bm\Lambda)\bm A=\bm A\gamma(\bm\Lambda), \quad \text{where}\quad
	\gamma(\bm\Lambda):=
	\Diag\left(\gamma\big(\lambda_{1}^{(n)}\big) , \dots, \gamma\big(\lambda_{n}^{(n)}\big)\right)
	\peq
	\end{equation*}
	
	Finally, note that, by definition of $\gamma(-\Delta_n)$, we have for every $\varphi\in V_n$ , 
	\begin{equation*}
	\begin{aligned}
	\gamma(-\Delta_n)\varphi
	&= \sum\limits_{k=1}^n\gamma(\lambda_{k}^{(n)})\big( \varphi,e_{k}^{(n)}\big)_0\; e_{k}^{(n)}
	=\sum\limits_{k,l,l'=1}^n\gamma(\lambda_{k}^{(n)})A_{kl} A_{kl'}\big( \varphi,\tilde{e}_{l}^{(n)}\big)_0\tilde{e}_{l'}^{(n)}\\
	&=\sum_{l,l'=1}^n [\bm A^T\gamma(\bm\Lambda)\bm A]_{ll'}\big( \varphi,\tilde{e}_{l}^{(n)}\big)_0\tilde{e}_{l'}^{(n)} \veq
	\end{aligned}
	\end{equation*}
	and since we proved that $\gamma(\bm\Lambda)\bm A=\bm A\gamma(\bm\Lambda)$, 
	\begin{equation*}
	\begin{aligned}
	\gamma(-\Delta_n)\varphi
	&=\sum_{l,l'=1}^n [\bm A^T\bm A\gamma(\bm\Lambda)]_{ll'}\big( \varphi,\tilde{e}_{l}^{(n)}\big)_0\tilde{e}_{l'}^{(n)}
	=\sum_{l,l'=1}^n [\bm I_n\gamma(\bm\Lambda)]_{ll'}\big( \varphi,\tilde{e}_{l}^{(n)}\big)_0\tilde{e}_{l'}^{(n)}\\
	&=\sum_{l=1}^n \gamma(\lambda_{l}^{(n)})\big( \varphi,\tilde{e}_{l}^{(n)}\big)_0\tilde{e}_{l}^{(n)} \sveq
	\end{aligned}
	\end{equation*}
	which proves the result.
\end{proof}

\section{Proof of Theorem 5.7 of the main article}\label{proof:err_cov}

We know provide a proof of \Cref{th:err_cov} of the main article, which we first recall.

\begin{thm*}
	Let Assumptions \ref{assum:alphabeta} and \ref{assum:Lh}  be satisfied. Then, there exists some $N_2\in \N$ such that for any $n>N_2$, 
the covariance error between the random field $\mathcal{Z}$ and its discretization $\mathcal{Z}_n$ satisfies, for any $\theta,\varphi \in H$,
\begin{equation*}
	\begin{aligned}
		\big\vert \cov\left((\mathcal{Z}, \theta)_0 , (\mathcal{Z}, \varphi)_0\right) &-\cov\left((\mathcal{Z}_n, \theta)_0 , (\mathcal{Z}_n, \varphi)_0\right)\big\vert \\
		&\lesssim
		\begin{cases}
			n^{-\min\left\lbrace s ;\; r ;\; (2\alpha\beta-1)\right\rbrace}\log n  
			& \text{if } (2\alpha\beta-1) = s,\\
			n^{-\min\left\lbrace s ;\; r ;\; (2\alpha\beta-1)\right\rbrace}  \log n
			& \text{if } (2\alpha\beta-1) = r  \text{ and } q>1,\\
			n^{-\min\left\lbrace s ;\; r;\; (2\alpha\beta-1)\right\rbrace}
			& \text{else}.
		\end{cases}
	\end{aligned}
\end{equation*}
\end{thm*}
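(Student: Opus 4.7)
The plan is to mirror the structure of the proof of \Cref{th:err_fem}. First, I would expand $\mathcal Z$ and $\mathcal Z_n$ in their respective eigenbases and use independence of the standard Gaussian weights $\{W_k\}$ to obtain
\begin{align*}
\cov\bigl((\mathcal Z,\theta)_0,(\mathcal Z,\varphi)_0\bigr) &= \sum_{k\in\mathbb N}\gamma(\lambda_k)^2\,(\theta,e_k)_0(\varphi,e_k)_0,\\
\cov\bigl((\mathcal Z_n,\theta)_0,(\mathcal Z_n,\varphi)_0\bigr) &= \sum_{k=1}^{n}\gamma(\lambda_k^{(n)})^2\,(\theta,e_k^{(n)})_0(\varphi,e_k^{(n)})_0.
\end{align*}
Subtracting these two expressions and applying the triangle inequality splits the covariance error into a truncation part indexed by $k>n$ and a discretisation part indexed by $M_0+1\le k\le n$ (the indices $k\le M_0$ contributing zero by \Cref{rem:eqEig}).

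For the truncation part I would use the pointwise bound $|(\theta,e_k)_0(\varphi,e_k)_0|\le \Vert\theta\Vert_0\Vert\varphi\Vert_0$ together with $\gamma(\lambda_k)^2\lesssim \lambda_k^{-2\beta}$ and Weyl's law (\Cref{prop:weyl}); the Riemann-sum estimate already carried out in~\eqref{eq:err_1_fem} then yields a rate of $n^{-(2\alpha\beta-1)}$. For the discretisation part I would insert an intermediate term and split it into
\begin{align*}
&\sum_{k=M_0+1}^{n}\bigl[\gamma(\lambda_k)^2-\gamma(\lambda_k^{(n)})^2\bigr](\theta,e_k)_0(\varphi,e_k)_0 \\
&\qquad + \sum_{k=M_0+1}^{n}\gamma(\lambda_k^{(n)})^2\bigl[(\theta,e_k)_0(\varphi,e_k)_0-(\theta,e_k^{(n)})_0(\varphi,e_k^{(n)})_0\bigr].
\end{align*}

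For the first sum I would factor $\gamma(\lambda_k)^2-\gamma(\lambda_k^{(n)})^2 = \bigl(\gamma(\lambda_k)+\gamma(\lambda_k^{(n)})\bigr)\bigl(\gamma(\lambda_k)-\gamma(\lambda_k^{(n)})\bigr)$, control the second factor by the mean-value argument from the proof of \Cref{th:err_fem} (using \Cref{def:psd} and \Cref{assum:Lh}), and estimate the remaining series by a Riemann sum; this yields a rate of $n^{-\min\{r,\,2\alpha\beta-1\}}$, with a $\log n$ correction in the boundary case $r=2\alpha\beta-1$ and $q>1$, exactly as for term~(II) in the proof of \Cref{th:err_fem}. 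For the second sum I would write
\begin{equation*}
(\theta,e_k)_0(\varphi,e_k)_0-(\theta,e_k^{(n)})_0(\varphi,e_k^{(n)})_0 = (\theta,e_k-e_k^{(n)})_0(\varphi,e_k)_0 + (\theta,e_k^{(n)})_0(\varphi,e_k-e_k^{(n)})_0,
\end{equation*}
apply Cauchy--Schwarz together with the eigenfunction estimate $\Vert e_k-e_k^{(n)}\Vert_0\lesssim \lambda_k^{q/2}n^{-s}$ from \Cref{assum:Lh}, and bound the resulting sum $n^{-s}\sum_{k=M_0+1}^{n}\gamma(\lambda_k^{(n)})^2\lambda_k^{q/2}$ by the same Riemann-sum trichotomy, obtaining a rate of $n^{-\min\{s,\,2\alpha\beta-1\}}$ with a $\log n$ correction at the boundary $s=2\alpha\beta-1$.

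Combining the three contributions and retaining the smallest exponent yields the stated estimate. The main technical obstacle, as in the proof of \Cref{th:err_fem}, will be the case distinction in the Riemann-sum step: one must track whether the exponent of the residual sum $\sum_{k}k^{m}$ falls below, equals, or exceeds~$-1$, since only the borderline case produces the $\log n$ correction, and the sub-cases $q=1$ versus $q>1$ must be treated separately. Apart from this bookkeeping, the argument is the bilinear analogue of the proof of \Cref{th:err_fem}: because the covariance is bilinear in $\mathcal Z$ rather than quadratic in norm, each of $n^{-s}$, $n^{-r}$ and $n^{-(2\alpha\beta-1)}$ appears to the first power, which accounts for the absence of the factor $1/2$ present in~\eqref{eq:err_str}.
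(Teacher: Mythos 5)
Your proposal is correct and follows essentially the same route as the paper's proof: split off the tail $k>n$ as a truncation term, then telescope the discretization term into an eigenvalue-difference part (handled by the mean-value argument and \Cref{assum:Lh}) and an eigenfunction-difference part (handled by Cauchy--Schwarz and the $\Vert e_k-e_k^{(n)}\Vert_0$ estimate), with the same Riemann-sum trichotomy producing the $\log n$ corrections. The only differences are cosmetic — you telescope in the opposite order (pairing $\gamma(\lambda_k)^2-\gamma(\lambda_k^{(n)})^2$ with the exact eigenfunctions and $\gamma(\lambda_k^{(n)})^2$ with the eigenfunction differences, which requires the same lower bound $\lambda_k^{(n)}\ge l_\lambda\lambda_k$ the paper invokes) and use a two-term rather than three-term expansion of the product difference.
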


\begin{proof}
	
	Let $n>\max\lbrace M_0; N_0\rbrace$,  $\theta,\varphi \in H$ and let $R(\theta,\varphi)$ be defined by
	\begin{align*}
	R(\theta,\varphi)
	& =\left\vert \cov\left((\mathcal{Z}, \theta)_0 , (\mathcal{Z}, \varphi)_0\right)-\cov\left((\mathcal{Z}_n, \theta)_0 , (\mathcal{Z}_n, \varphi)_0\right)\right\vert\\
	& = \vert \e\left[(\mathcal{Z}, \theta)_0 (\mathcal{Z}, \varphi)_0\right]
	- \e\left[(\mathcal{Z}_n, \theta)_0 (\mathcal{Z}_n, \varphi)_0\right] \vert \sveq
	\end{align*}
	where the last equality follows from the fact that $\mathcal{Z}$ and $\mathcal{Z}_n$ are centered.
	
	To prove the error estimate of this theorem, we proceed in the same way as in \Cref{th:err_fem} by splitting
	\begin{equation*}
	\begin{aligned}
	R(\theta,\varphi) & \le 
	\big\vert \e\left[(\mathcal{Z}, \theta)_0 (\mathcal{Z}, \varphi)_0\right] - \e[(\mathcal{Z}^{(n)}, \theta)_0 (\mathcal{Z}^{(n)}, \varphi)_0] \big\vert \\
	& \quad\quad + \big\vert \e[(\mathcal{Z}^{(n)}, \theta)_0 (\mathcal{Z}^{(n)}, \varphi)_0] 
	- \e\left[(\mathcal{Z}_n, \theta)_0 (\mathcal{Z}_n, \varphi)_0\right] \big\vert \\
	& = R_T(\theta,\varphi) + R_D(\theta,\varphi) \sveq
	\end{aligned}
	\end{equation*}
	where $\mathcal{Z}^{(n)}$ denotes the truncation of $\mathcal{Z}$ after $n$ terms.
	
	\vspace{1em}
	
	\underline{Truncation error term} $R_T(\theta,\varphi)$ :
	Note that
	\begin{equation*}
	\e\left[(\mathcal{Z}, \theta)_0 (\mathcal{Z}, \varphi)_0\right]
	=\e\bigg[\sum_{k,l\in\N} \gamma(\lambda_k)\gamma(\lambda_l) W_kW_l (e_k, \theta)_0 (e_l, \varphi)_0\bigg]
	=\sum_{k\in\N} \gamma(\lambda_k)^2  (e_k, \theta)_0 (e_k, \varphi)_0
	\end{equation*}
	which gives
	\begin{equation*}
	R_T(\theta,\varphi)
	\le \sum_{k>n} \gamma(\lambda_k)^2  \vert (e_k, \theta)_0 (e_k, \varphi)_0\vert \peq
	\end{equation*}
	Using the Cauchy--Schwartz inequality on the terms $(e_k, \theta)_0$ and $(e_k, \varphi)_0$, the orthonormality of $\lbrace e_k\rbrace_{k\in\N}$, and \Cref{def:psd}, we obtain
	\begin{equation*}
	R_T(\theta,\varphi)\le  \Vert \theta\Vert_0 \Vert \varphi\Vert_0 \sum_{k>n} \gamma(\lambda_k)^2  \lesssim \Vert \theta\Vert_0 \Vert \varphi\Vert_0 \sum_{k>n} \lambda_k^{-2\beta} \peq
	\end{equation*}
	Finally, \Cref{prop:weyl} yields
	\begin{equation*}
	R_T(\theta,\varphi) \lesssim \Vert \theta\Vert_0 \Vert \varphi\Vert_0 \sum_{k>n} k^{-2\alpha\beta} \lesssim \Vert \theta\Vert_0 \Vert \varphi\Vert_0\;  n^{-(2\alpha\beta-1)}\peq
	\end{equation*}

	\vspace{0.5em}
	
	\underline{Discretization error} $R_D(\theta,\varphi)$ : 
	From the triangle inequality,
	\begin{equation*}
	\begin{aligned}
	R_D(\theta,\varphi) & \le 
	\big\vert \e[(\mathcal{Z}^{(n)}, \theta)_0 (\mathcal{Z}^{(n)}, \varphi)_0] - \e[(\widetilde{\mathcal{Z}}^{(n)}, \theta)_0 (\widetilde{\mathcal{Z}}^{(n)}, \varphi)_0] \big\vert \\
	& \quad\quad + \big\vert \e[(\widetilde{\mathcal{Z}}^{(n)}, \theta)_0 (\widetilde{\mathcal{Z}}^{(n)}, \varphi)_0]
	- \e\left[(\mathcal{Z}_n, \theta)_0 (\mathcal{Z}_n, \varphi)_0\right] \big\vert \\
	& = R_D^{(1)}(\theta,\varphi) + R_D^{(2)}(\theta,\varphi) \sveq
	\end{aligned}
	\end{equation*}
	where $\widetilde{\mathcal{Z}}^{(n)}$ is defined as
	\begin{equation*}
	\widetilde{\mathcal{Z}}^{(n)}=\sum_{k=1}^n\gamma(\lambda_k)W_k e_{k}^{(n)} \peq
	\end{equation*}
	%
	The first term can be bounded by
	\begin{equation*}
	\begin{aligned}
	& R_D^{(1)}(\theta,\varphi) \\
	& \quad \le
	\sum_{k=1}^n  \gamma(\lambda_k)^2 \big\vert  (e_k, \theta)_0(e_k, \varphi)_0 -  (e_k^{(n)}, \theta)_0(e_k^{(n)}, \varphi)_0 \big\vert
	\\
	& \quad =
	\sum_{k=1}^n \gamma(\lambda_k)^2  \big\vert (e_k-e_k^{(n)}, \theta)_0(e_k, \varphi)_0  +  (e_k, \theta)_0(e_k-e_k^{(n)}, \varphi)_0 
	-  (e_k-e_k^{(n)}, \theta)_0(e_k-e_k^{(n)}, \varphi)_0 \big\vert
	\end{aligned}
	\end{equation*}
	and satisfies further by the triangle and the Cauchy--Schwartz inequality:
	\begin{equation*}
	\begin{aligned}
	R_D^{(1)}(\theta,\varphi) \lesssim \Vert\theta\Vert_0 \Vert \varphi\Vert_0 \bigg(
	\sum_{k=1}^n \gamma(\lambda_k)^2 \Vert e_k - e_k^{(n)}\Vert_0 + \sum_{k=1}^n \gamma(\lambda_k)^2 \Vert e_k - e_k^{(n)}\Vert_0^2
	\bigg) \speq
	\end{aligned}
	\end{equation*}
	Using \Cref{prop:weyl}, \Cref{def:psd} \Cref{assum:Lh},  and the fact that $\alpha q \le 2s$, the first sum can be bounded by
	\begin{equation*}
	\begin{aligned}
	\sum_{k=1}^n \gamma(\lambda_k)^2 \Vert e_k - e_k^{(n)}\Vert_0 
	&\lesssim 
	n^{-s}\bigg(
	\sum_{k=1}^n k^{-2\alpha\beta+\alpha q/2} \bigg) 
	\lesssim 
	n^{-s}\bigg(
	\sum_{k=1}^n k^{s-2\alpha\beta}\bigg)\\
	&	
	\lesssim 
	\begin{cases}
	n^{-s}\log n & \text{if } 2\alpha\beta-1=s, \\
	n^{-\min\lbrace s, 2\alpha\beta-1\rbrace} &  \text{else.} 
	\end{cases}
	\end{aligned}
	\end{equation*}
	Similarly, we prove that
	\begin{equation*}
	\begin{aligned}
	\sum_{k=1}^n \gamma(\lambda_k)^2 \Vert e_k - e_k^{(n)}\Vert_0^2
	\lesssim 
	\begin{cases}
	n^{-2s}\log n & \text{if } 2\alpha\beta-1=2s, \\
	n^{-\min\lbrace 2s, 2\alpha\beta-1\rbrace} &  \text{else.} 
	\end{cases}
	\end{aligned}
	\end{equation*}
	We conclude then by considering the term with the slowest convergence that
	\begin{equation*}
	\begin{aligned}
	R_D^{(1)}(\theta,\varphi) 
	\lesssim \Vert\theta\Vert_0 \Vert \varphi\Vert_0
	\begin{cases}
	n^{-s}\log n & \text{if } 2\alpha\beta-1=s, \\
	n^{-\min\lbrace s, 2\alpha\beta-1\rbrace} &  \text{else.} 
	\end{cases}
	\end{aligned}
	\end{equation*}
	
	%
	%
	For $R_D^{(2)}(\theta,\varphi)$ we observe that
	\begin{align*}
	R_D^{(2)}(\theta,\varphi) & \le
	\sum_{k=1}^n \big\vert \gamma(\lambda_{k})^2 - \gamma(\lambda_k^{(n)})^2\big\vert \big\vert (e_k^{(n)}, \theta)_0 (e_k^{(n)}, \varphi)_0 \big\vert \\
	& \le
	\Vert\theta\Vert_0\Vert\varphi\Vert_0\sum_{k=1}^n \big\vert \gamma(\lambda_{k})^2 - \gamma(\lambda_k^{(n)})^2\big\vert
	\sveq
	\end{align*}
	where we used that $\lbrace e_k^{(n)} \rbrace_{1\le k \le  n}$ is orthonormal.
	Applying the mean value theorem we get, for any $k\in\bi 1, n\ei$, 
	\begin{equation*}
	\begin{aligned}
	\vert\gamma(\lambda_k)^2-\gamma(\lambda_{k}^{(n)})^2\vert
	&=\vert\gamma(\lambda_k)+\gamma(\lambda_{k}^{(n)})\vert
	\vert\gamma(\lambda_k)-\gamma(\lambda_{k}^{(n)})\vert\\
	&\le \vert\gamma(\lambda_k)+\gamma(\lambda_{k}^{(n)})\vert
	\vert\lambda_{k}^{(n)}-\lambda_k\vert 
	\sup\limits_{\theta \in (0,1)} \vert \gamma'(\theta\lambda_{k}^{(n)}+ (1-\theta)\lambda_{k})\vert.
	\end{aligned}
	\end{equation*}

	Using the same arguments as the ones used in the proof for \Cref{th:err_fem}, we can find some $N_1\in \N$ such that for any $n>N_1$,  and any $k\in \bi N_1, n\ei$, $\min\big\lbrace\lambda_k ;\; \lambda_{k}^{(n)}\big\rbrace \ge l_\lambda c_\lambda k^\alpha\ge L_\gamma$, where $L_\gamma$ is defined in \Cref{def:psd}. For any such $k$, we then have, still according to \Cref{def:psd}, $\vert\gamma(\lambda_k)+\gamma(\lambda_{k}^{(n)})\vert \lesssim (\min\big\lbrace\lambda_k ;\; \lambda_{k}^{(n)}\big\rbrace)^{-\beta} \lesssim k^{-\alpha\beta}$ and for any $\theta\in (0,1)$, 
	$$\vert \gamma'(\theta\lambda_{k}^{(n)}+ (1-\theta)\lambda_{k})\vert \lesssim \left(\min\big\lbrace\lambda_k ;\; \lambda_{k}^{(n)}\big\rbrace\right)^{-(1+\beta)}
	\lesssim k^{-\alpha(1+\beta)},$$ which in turn gives
	\begin{equation*}
	\vert\gamma(\lambda_k)^2-\gamma(\lambda_{k}^{(n)})^2\vert \lesssim \vert\lambda_{k}^{(n)}-\lambda_k\vert 
	k^{-(1+2\beta)}.
	\end{equation*}
	And if $k\in\bi M_0+1, N_1-1\ei$, we can simply take  $\vert\gamma(\lambda_k)^2-\gamma(\lambda_{k}^{(n)})^2\vert \lesssim \vert\lambda_{k}^{(n)}-\lambda_k\vert$ as the other terms can be bounded by constants independent of $n$. 
	In conclusion, using \Cref{assum:Lh} and \Cref{prop:weyl}, we get
	\begin{align*}
	R_D^{(2)}(\theta,\varphi)
	& \lesssim
	\Vert\theta\Vert_0\Vert\varphi\Vert_0\left(
	\sum_{k=M_0+1}^{N_1-1} \vert\lambda_{k}^{(n)}-\lambda_k\vert +
	\sum_{k=N_1}^n \vert\lambda_{k}^{(n)}-\lambda_k\vert k^{-(1+2\beta)}
	\right)\sveq \\
	& \lesssim
	\Vert\theta\Vert_0\Vert\varphi\Vert_0\, n^{-r}\left(
	1 +
	\sum_{k=N_1}^n k^{\alpha(q-(1+2\beta))}
	\right)\sveq
	\end{align*}
	If $q=1$, since $2\alpha\beta > 1$, we get $R_D^{(2)}(\theta,\varphi) \lesssim \Vert\theta\Vert_0\Vert\varphi\Vert_0\, n^{-r}$. If now $q>1$, since $\alpha(q-1)\le r$,
	\begin{equation*}
	R_D^{(2)}(\theta,\varphi)
	\lesssim
	\Vert\theta\Vert_0\Vert\varphi\Vert_0\, n^{-r}\left(1 +\sum_{k=N_1}^n k^{r-2\alpha\beta}\right) 
	\lesssim
	\Vert\theta\Vert_0\Vert\varphi\Vert_0
	\begin{cases}
	n^{-r}\log n & \text{if } 2\alpha\beta-1=r, \\
	n^{-\min\lbrace r;\; 2\alpha\beta-1\rbrace} &  \text{else.} 
	\end{cases}
	\end{equation*}

	\vspace{0.5em}
	
	\underline{Total error} : Combining the three error terms $R_T(\theta,\varphi)$, $R_D^{(1)}(\theta,\varphi)$, and $R_D^{(2)}(\theta,\varphi)$,  and keeping the terms with the slowest convergence then gives the claim for the total error. 
\end{proof}

\section{Sampling a Galerkin--Chebyshev approximation of a random field}

In \Cref{sec:cheb} of the main article, we present an approach to generate samples of the Galerkin--Chebyshev approximation of a random field defined on a Riemannian manifold. We provide here additional implementation details and pseudo-code for this approach.

\subsection{An upper-bound for the eigenvalues of the stiffness matrix}\label{sec:upper_bound}

In order to define the polynomial $P_{\gamma, K}$ used to approximate the power spectral density $\gamma$ defining the random field, one needs to provide an upper-bound $\lambda_{\max}$ of the largest eigenvalue of the stiffness matrix~$\bm S$. Let us denote by $\lambda_{n}^{(n)}$ this maximal eigenvalue. Recall from \Cref{prop:diag_lapl_discrg} and \Cref{prop:diag_lapl_discr} of the main article that the eigenvalues of $\bm S$ are exactly those of the stencil $(\bm R, \bm C)$. As such, they can be upper-bounded by the maximum of the associated Rayleigh quotient, thus giving
\begin{equation*}
\lambda_{n}^{(n)} 
\le \max_{\bm x\in\R^n, \Vert \bm x\Vert_2=1} \frac{\bm x^T \bm R \bm x }{\bm x^T \bm C \bm x} 
\le\frac{\max\limits_{\bm x\in\R^n, \Vert \bm x\Vert_2=1} \bm x^T \bm R \bm x }{\min\limits_{\bm x\in\R^n, \Vert \bm x\Vert_2=1} \bm x^T \bm C \bm x} \peq
\end{equation*}
We recognize on the right-hand side of the last inequality the ratio between two Rayleigh quotients. Hence, we can conclude that an upper-bound $\lambda_{\max}$ of the eigenvalues of $\bm S$ is obtained by taking the ratio
\begin{equation*}
\lambda_{\max}=\frac{\lambda_{\max}(\bm R)}{\lambda_{\min}(\bm C)},
\end{equation*}
where $\lambda_{\max}(\bm R)$ (resp.\ $\lambda_{\min}(\bm C)$) is an upper-bound (resp.\ lower-bound) of the eigenvalues of the stiffness matrix $\bm R$ (resp.\ mass matrix $\bm C$). On the one hand, $\lambda_{\max}(\bm R)$ can be obtained using the Gershgorin circle theorem, thus requiring only to sum the entries of $\bm R$ row-wise (or column-wise) to get the bound. On the other hand, $\lambda_{\min}(\bm C)$ can be taken to be the inverse of an upper-bound $\lambda_{\max}(\bm C^{-1})$ of the eigenvalues of the inverse of $\bm C$. This upper-bound can in turn be obtained using a power iteration scheme which would require to solve linear systems defined by $\bm C$.

\subsection{Workflow and pseudo-code}

We now present the workflow used to generate samples of the Galerkin--Chebyshev approximation and some pseudo-code associated with the different steps of this workflow. 

The overall workflow is presented in \Cref{workflow_sim}. The weights of the Galerkin--Chebyshev approximation can be sampled using \Cref{alg:sim_gen}. This algorithm relies on the following two sub-algorithms:
\begin{itemize}
	\item an algorithm $\bm \Pi_{\bm S}$ taking as input a vector $\bm x$ and returning the product $\bm \Pi_{\bm S}(\bm x)=\bm S \bm x$;
	\item an algorithm $\bm{\Pi}_{(\sqrt{\bm C})^{-T}}$ taking as input a vector $\bm x$ and returning the solution $\bm y=\bm \Pi_{(\sqrt{\bm C})^{-T}}(\bm x)$ to the linear system
	$\big(\sqrt{\bm C}\big)^{T} \bm y = \bm x \peq$
\end{itemize}
In the most general case, we proposed implementations for these two algorithms that are recalled in \Cref{alg:piS_chol,alg:piC_chol}.

\begin{workflow}{Generate a sample of the discretized field $\widehat{\mathcal{Z}}_{n,K}$ in~\eqref{eq:fem_discr_approx}}\label{workflow_sim}
	\SetAlgoLined
	
	\vspace{1ex}
	
	\begin{enumerate}
		\item Compute the Cholesky factorization $\bm C=\bm L\bm L^T$ of the mass matrix $\bm C$;\label{item:chol}
		\vspace{1ex}
		\item Compute
		\begin{itemize}
			\item an upper-bound $\lambda_{\max}(\bm R)$ of the eigenvalues of $\bm R$ (using Gershgorin circle theorem),
			\item an upper-bound $\lambda_{\max}(\bm C^{-1})$ of the eigenvalues of the inverse of $\bm C$ (using the Cholesky factors of $\bm C$ to solve the linear systems in a power iteration scheme);
		\end{itemize}\label{item:lm}
		\vspace{1ex}
		\item Run \Cref{alg:sim_gen} using the implementations of $\bm\Pi_{\bm S}$, and  $\bm{\Pi}_{(\sqrt{\bm C})^{-T}}$ given in \Cref{alg:piS_chol,alg:piC_chol}, and taking $\lambda_{\max}=\lambda_{\max}(\bm R)\lambda_{\max}(\bm C^{-1})$.\label{item:sim}
	\end{enumerate}
\end{workflow}

Note that in the two particular cases presented in \Cref{sec:part_cases} of the main article, the first step of \Cref{workflow_sim} is no longer required, and the second step can be performed without requiring a power iteration scheme. Besides, the implementations of $\bm \Pi_{\bm S}$ and $\bm{\Pi}_{(\sqrt{\bm C})^{-T}}$ can be replaced by single products with sparse or diagonal matrices. This speeds up greatly the time needed to generate samples. To illustrate this,  we gave in \Cref{fig:effort} of the main article the computational time (and corresponding orders of polynomial approximation) needed to generate the samples used in the numerical experiment presented in \Cref{sec:cov_num} of the main article.

\begin{algorithm}
	\caption{Compute a sample of the discretized random field $\widehat{\mathcal{Z}}_{n,K}$ in~\eqref{eq:fem_discr_approx} }\label{alg:sim_gen}
	\begin{algorithmic}[1]
		\KwInputs{ \Statex Algorithms $\bm \Pi_{\bm S}$ and $\bm{\Pi}_{(\sqrt{\bm C})^{-T}}$ to compute products by $\bm S$ and $\big(\sqrt{\bm C}\big)^{-T}$,
			\Statex Estimate $\lambda_{\max}$ of the largest eigenvalue of $\bm S$, 
			\Statex Order of polynomial approximation $K$.}
		\KwOut{Gaussian random weights $\widehat{\bm Z}$ of the discretized random field~\eqref{eq:fem_discr_approx}.}
		
		\vspace{1ex}
		\hrule
		\vspace{1ex}
		
		\State Set $\widehat{\bm X}\leftarrow \bm 0$;
		\State Sample $\bm W \sim \mathcal{N}(\bm 0, \bm I)$;	
		
		\vspace{1ex}
		\State Compute the first $(K+1)$ coefficients $c_0,\dots,c_K$ of the Chebyshev series~\eqref{eq:cheb_sum} of the function $\displaystyle t\in [-1,1] \mapsto \gamma\left(\frac{\lambda_{\max}}{2} (1+t)\right)$ using the FFT algorithm;

		\vspace{1ex}
		\State Set $\bm y^{(-2)} \leftarrow \bm W$;
		\State Update $\widehat{\bm X}\leftarrow\widehat{\bm X} + c_0 \bm y^{(-2)}$;
		\If{$(K=0)$}
		\State Jump to Line \ref{algstep:solve};
		\EndIf
		\vspace{1ex}
		
		\State Set $\bm y^{(-1)} \leftarrow (2/\lambda_{\max})\bm \Pi_{\bm S}(\bm W) - \bm W$;
		\State Update $\widehat{\bm X}\leftarrow\widehat{\bm X} + c_1 \bm y^{(-1)}$;
		\If{$(K=1)$}
		\State Jump to Line \ref{algstep:solve};
		\EndIf
		
		\vspace{1ex}
		\For{$k=2$ \textbf{to} $K$}
		\State Compute $\bm y = (4/\lambda_{\max})\bm \Pi_{\bm S}(\bm y^{(-1)}) - 2\bm y^{(-1)}-\bm y^{(-2)}$;
		\State Update $\widehat{\bm X}\leftarrow\widehat{\bm X} + c_k \bm y$;
		\State Set $\bm y^{(-2)} \leftarrow \bm y^{(-1)}$;
		\State Set $\bm y^{(-1)} \leftarrow \bm y$;
		\EndFor
		\vspace{1ex}
		
		\State Compute $\widehat{\bm Z}  = \bm{\Pi}_{(\sqrt{\bm C})^{-T}}\left(\widehat{\bm X}\right)$;\label{algstep:solve}
		
		\KwRet{$\widehat{\bm Z}$.}
	\end{algorithmic}
\end{algorithm}

\begin{algorithm}
	\caption{Implementation of $\bm{\Pi}_{(\sqrt{\bm C})^{-T}}$}\label{alg:piC_chol}
	\begin{algorithmic}[1]
		
		\KwDepend{Cholesky factor $\bm L^T$ of $\bm C$.}
		\KwIn{Vector $\bm x$.}
		\KwOut{Vector $\bm y$ solution of $\big(\sqrt{\bm C}\big)^{T}\bm y=\bm x$.}
		
		\vspace{1ex}
		\hrule
		\vspace{1ex}
		
		\State Solve $\bm L^T \bm y  = \bm x$ by backward substitution;
		
		\KwRet{$\bm y$.}
	\end{algorithmic}
\end{algorithm}

\begin{algorithm}
	\caption{Implementation of $\bm\Pi_{\bm S}$}\label{alg:piS_chol}
	\begin{algorithmic}[1]
		\KwDepend{\Statex $\quad$ Cholesky factor $\bm L$ of $\bm C$, \Statex $\quad$ Stiffness matrix $\bm R$.}
		\KwIn{Vector $\bm x$.}
		\KwOut{Vector $\bm y=\bm S\bm x$.}
		
		\vspace{1ex}
		\hrule
		\vspace{1ex}
		
		\State Solve $\bm L^T \bm u  = \bm x$ by backward substitution;
		\State Update $\bm u \leftarrow \bm R \bm u$;
		\State Solve $\bm L \bm y  = \bm u$ by forward substitution;
		
		\KwRet{$\bm y$.}
	\end{algorithmic}
\end{algorithm}

\subsection{Numerical experiment: Polynomial approximation error}\label{sec:pol_num}

The polynomial approximation error refers to the error due to the fact that the \psd $\gamma$ defining the random field is in practice approximated by a polynomial. This polynomial is defined as a truncated Chebyshev series of order $K$ chosen by the practitioner. \Cref{prop:conv_cheb_ap} of the main article ensures that this error converges to $0$ as $K\rightarrow \infty$. 

To get a feeling of how fast this convergence can be, we consider the same setting as the one described for the truncation error study (cf. \Cref{sec:trunc_exp} of the main article). Let $n$ be a fixed truncation order. We compute for various choices of~$K$, the approximation error
\begin{equation*}
\Vert {\mathcal{Z}}_n-\widehat{\mathcal{Z}}_{n,K}\Vert_{L^2(\Omega;H)}^2
\end{equation*}
between the truncated expansion $\mathcal{Z}_n=\mathcal{Z}^{(n)}$ and its approximation $\widehat{\mathcal{Z}}_{n,K}$ obtained by replacing $\gamma$ by a Chebyshev series of order $K$. Following the proof of \Cref{prop:conv_cheb_ap}, this error can in particular be computed without requiring any simulations since it has a closed form given by
\begin{equation*}
\begin{aligned}
\Vert {\mathcal{Z}}_n-\widehat{\mathcal{Z}}_{n,K}\Vert_{L^2(\Omega;H)}^2
=\e\left[\Vert {\mathcal{Z}}_n-\widehat{\mathcal{Z}}_{n,K}\Vert_{0}^2 \right] 
&=\sum\limits_{k=1}^{n}(\gamma(\lambda_{k})-P_{\gamma,K}(\lambda_{k}) )^2.
\end{aligned}
\end{equation*}

These approximation errors are computed for four scenarios corresponding to truncation orders $n\in\lbrace 1024$, $10^4$, $100489$, $10^6\rbrace$ and for the \psd given by the parameters $\nu=1$ and $a=\pi/6$.  The results are presented in \Cref{fig:pol} and show that even for large truncation orders, a Chebyshev series of order of around $1000$ is enough to reach very small errors. Moreover, considering very large orders of approximation results in the error stagnating at the machine precision level.

\begin{figure}[h]
	\centering
	\includegraphics[width=0.5\textwidth]{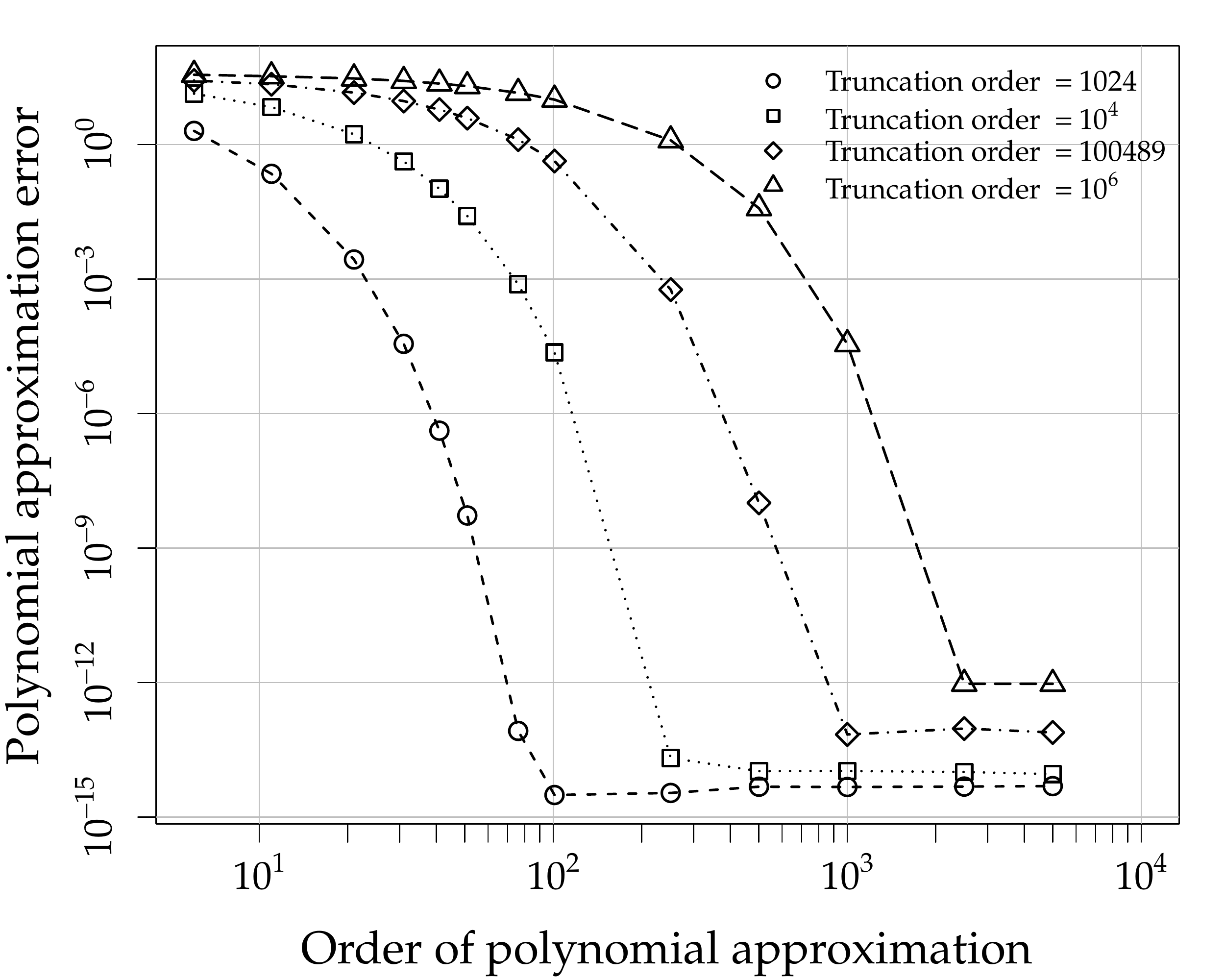}
	\caption{Polynomial approximation error $\Vert {\mathcal{Z}}_n-\widehat{\mathcal{Z}}_{n,K}\Vert_{L^2(\Omega;H)}^2$ on the sphere for the Matérn \psd with $\nu=1$ and $a=\pi/6$.}
	\label{fig:pol}
\end{figure}


\end{document}